\DeclareMathAlphabet\mathoo{U}{eur}{b}{n}
 \DeclareMathOperator*{\esssup}{ess\,sup}
\theoremstyle{plain}
\newtheorem{theorem}{Theorem}[section]
\newtheorem{proposition}[theorem]{Proposition}
\newtheorem{corollary}[theorem]{Corollary}
\theoremstyle{remark}
\newtheorem{remark}{Remark}
 \numberwithin{equation}{section}
\begin{document}

\title[Inverse-closedness of the set of integral operators]
 {Inverse-closedness of the set\\ of integral operators\\
with $L_1$-continuously varying kernels}

\author[V.G. Kurbatov]{V.G. Kurbatov}

\address{Department of Mathematical Physics\\
Voronezh State University\\
1, Universitetskaya Square\\
Voronezh 394036\\
Russia} \email{kv51@inbox.ru}

\thanks{This work was supported by the Ministry of Education and Science of the Russian
Federation under state order No. 1306.}

\author{V.I. Kuznetsova}
\address{Department of Applied Mathematics and Mechanics\\
Voronezh State Technical University\\
14, Mos\-cow Avenue\\
Voronezh 394026\\
Russia} \email{kv57@bk.ru}
%----------classification, keywords, date
\subjclass{Primary 45P05, 45A05; Secondary 47B39, 46H10}

\keywords{Integral operator, integral equation, inverse-closedness, full subalgebra,
difference-integral operator}

\date{\today}
%----------additions

\begin{abstract}
Let $N$ be an integral operator of the form
\begin{equation*}
\bigl(Nu\bigr)(x)=\int_{\mathbb R^c}n(x,x-y)\,u(y)\,dy
\end{equation*}
acting in $L_p(\mathbb R^c)$ with a measurable kernel $n$ satisfying the estimate
\begin{equation*}
|n(x,y)|\le\beta(y),
\end{equation*}
where $\beta\in L_1$. It is proved that if the function $t\mapsto n(t,\cdot)$ is
continuous in the norm of $L_1$ and the operator $\mathbf1+N$ has an inverse, then
$(\mathbf1+N)^{-1}=\mathbf1+M$, where $M$ is an integral operator possessing the same
properties.
\end{abstract}

\maketitle

\section{Introduction}
A class $\mathoo A$ of linear operators is called inverse-closed if the inverse to any
operator from $\mathoo A$ also belongs to $\mathoo A$. Usually, an inverse-closed class
forms a subalgebra of the algebra of all bounded operators. The investigation of
inverse-closed subalgebras (full subalgebras) had its origin in Wiener's theorem on
absolutely convergent Fourier
series~\cite{Wiener,Bochner-Phillips,Gel'fand-Raikov-Shilov}. Wiener's theorem implies
that if the operator $\mathbf1+N$, where $N$ is an operator of convolution with a
summable function, is invertible, then $(\mathbf1+N)^{-1}=\mathbf1+M$, where $M$ is also
an operator of convolution with a summable function. For more recent results on
inverse-closed classes, see~\cite{Balan-Krishtal}--\cite{Sun11} and references therein.

This paper deals with the integral operator in $L_p(\mathbb R^c,\mathbb E)$, $1\le
p\le\infty$, of the form
\begin{equation*}
\bigl(Nu\bigr)(x)=\int_{\mathbb R^c}n(x,x-y)\,u(y)\,dy.
\end{equation*}
It is assumed that $\mathbb E$ is a finite-dimensional Banach space, the values $n(x,y)$
of the kernel $n$ are bounded linear operators acting in $\mathbb E$, and the kernel $n$
is measurable and satisfies the estimate
\begin{equation}\label{e:beta}
\Vert n(x,y)\Vert\le\beta(y),\tag{$*$}
\end{equation}
where $\beta\in L_1$. The main result of the paper (Theorem~\ref{t:fin}) states that if
the function $t\mapsto n(t,\cdot)$ is continuous in the $L_1$-norm and the operator
$\mathbf1+N$ has an inverse, then $(\mathbf1+N)^{-1}=\mathbf1+M$, where $M$ is an
integral operator possessing the same properties.

The idea of the proof consists of a combination of two results. The fist
result~\cite{Beltita,Farrell-Strohmer,Kurbatov99,Kurbatov01} states that if
estimate~\eqref{e:beta} holds for the kernel $n$ of the operator $N$ and $\mathbf1+N$ is
invertible in $L_p(\mathbb R^c,\mathbb E)$, then $(\mathbf1+N)^{-1}=\mathbf1+M$, where
$M$ is an integral operator with a kernel $m$ satisfying estimate~\eqref{e:beta} as well.
This easily implies that the fact of the invertibility of $\mathbf1+N$ and the kernel $m$
do not depend on $p$. On the other hand, it was known~\cite{Kurbatov99} that (for a wide
class of operators $T$) if `coefficients' of $T:\,L_p\to L_p$ vary continuously in an
abstract sense, then $T^{-1}$ has the same property (provided $T^{-1}$ exists). Here the
abstract continuity means that $S_hTS_{-h}$, where $\bigl(S_hu\bigr)(x)=u(x-h)$,
continuously depends upon $h$ on subspaces of compactly supported (in a uniform sense)
functions. If $p=\infty$ such abstract continuity of the integral operator $N$ exactly
means that the function $t\mapsto n(t,\cdot)$ is continuous in the $L_1$-norm. Technical
difficulties of the proof mostly consist of the correct usage of the Lebesgue integral.
As a generalization of the main result, me show (Theorem~\ref{t:fin2}) that an inverse to
a difference-integral operator with `continuous' coefficients also has `continuous'
coefficients.

The paper is organized as follows. General facts concerning the Lebesgue integral are
recalled in Section~\ref{s:Lebesgue integral}. In Section~\ref{s:N_1}, we describe some
properties of the class of integral operators majorized by a convolution with a function
$\beta\in L_1$. In Section~\ref{e:Cu and C}, we discuss operators whose coefficients
(kernels) vary continuously in an abstract sense. In Section~\ref{s:CN_1}, we prove
Theorem~\ref{t:fin} which is the main result of this paper. In Section~\ref{e:h}, we show
that the integral operator $N$ considered possesses the property of local compactness.
This fact allows us to generalize Theorem~\ref{t:fin} to a class of difference-integral
operators (Section~\ref{s:D+N}).

\section{General notation and the Lebesgue integral}\label{s:Lebesgue integral}
Let $X$ and $Y$ be Banach spaces. We denote by $\mathoo B(X,Y)$ the space of all bounded
linear operators acting on $X$ to $Y$. If $X=Y$ we use the brief notation $\mathoo B(X)$.
We denote by $\mathbf1\in\mathoo B(X)$ the identity operator.

As usual, $\mathbb Z$ is the set of all integers and $\mathbb N$ is the set of all
positive integers.

Let $c\in\mathbb N$. Unless otherwise is explicitly stated, the linear space $\mathbb
R^c$ is considered with the Euclidian norm $|\cdot|$. For $x\in\mathbb R^c$ and $r>0$, we
denote by $B(x,r)$ the open ball $\{\,y\in\mathbb R^c:\,|y-x|<r\,\}$ with centre at $x$
and radius $r$.

We denote by $\mu$ the Lebesgue measure on $\mathbb R^c$. We accept that a measurable
function~\cite{Bourbaki Int'egration,Edwards,Hewitt-Ross} may be undefined on a set of
measure zero. We say that $E\subseteq\mathbb R^c$ is a set of \emph{full measure} if its
complement has measure zero.

Let $E$ be a measurable subset of $\mathbb R^c$. The point $x\in E$ is
called~\cite[ch.~1, \S~2]{Stein} a \emph{point of density} of $E$ if
\begin{equation*}
\lim_{r\to+0}\frac{\mu\bigl(E\cap B(x,r)\bigr)}{\mu\bigl(B(x,r)\bigr)}=1.
\end{equation*}
 \begin{proposition}[{\rm Lebesgue's density theorem \cite[ch. 1, \S~2, Proposition 1]{Stein}}\bf]\label{p:Stein:1:1}
Almost every point of a measurable set $E\subseteq\mathbb R^c$ is a point of density of
$E$.
 \end{proposition}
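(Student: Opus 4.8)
The plan is to reduce the assertion to sets of finite measure and then combine a covering argument with the outer regularity of the Lebesgue measure. First I would observe that whether a point $x$ is a point of density of $E$ depends only on the traces $E\cap B(x,\rho)$ with $\rho$ small; consequently, for $x\in B(0,k)$ the point $x$ is a point of density of $E$ if and only if it is a point of density of $E\cap B(0,k)$, and since $\mathbb R^c=\bigcup_{k\in\mathbb N}B(0,k)$ it is enough to prove the statement when $\mu(E)<\infty$. Next, since the ratio $\mu\bigl(E\cap B(x,r)\bigr)/\mu\bigl(B(x,r)\bigr)$ is always at most $1$, the set of those $x\in E$ that fail to be points of density of $E$ equals $\bigcup_{n\in\mathbb N}A_{1-1/n}$, where for $0<\alpha<1$
\begin{equation*}
A_\alpha=\Bigl\{\,x\in E:\ \liminf_{r\to+0}\frac{\mu\bigl(E\cap B(x,r)\bigr)}{\mu\bigl(B(x,r)\bigr)}<\alpha\,\Bigr\};
\end{equation*}
hence it suffices to show that $A_\alpha$ has outer measure zero for each fixed $\alpha\in(0,1)$ (a priori $A_\alpha$ need not be measurable, so $\mu^{*}$ below denotes the outer measure).

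Then I would fix $\varepsilon>0$ and, by outer regularity, pick an open set $U\supseteq E$ with $\mu(U\setminus E)<\varepsilon$. Since $A_\alpha\subseteq U$ and $U$ is open, for each $x\in A_\alpha$ there are arbitrarily small $r>0$ with $B(x,r)\subseteq U$, and by the definition of $A_\alpha$ we may in addition require $\mu\bigl(E\cap B(x,r)\bigr)<\alpha\,\mu\bigl(B(x,r)\bigr)$, whence $\mu\bigl(B(x,r)\setminus E\bigr)>(1-\alpha)\,\mu\bigl(B(x,r)\bigr)$. These balls cover $A_\alpha$ in the sense of Vitali (every point lies in balls of the family of arbitrarily small radius), and $A_\alpha$ has finite outer measure, so the Vitali covering lemma supplies a countable pairwise disjoint subfamily $\{B_i\}$ with $\mu^{*}\bigl(A_\alpha\setminus\bigcup_iB_i\bigr)=0$. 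Therefore
\begin{equation*}
\mu^{*}(A_\alpha)\le\sum_i\mu(B_i)\le\frac1{1-\alpha}\sum_i\mu\bigl(B_i\setminus E\bigr)\le\frac1{1-\alpha}\,\mu(U\setminus E)<\frac{\varepsilon}{1-\alpha},
\end{equation*}
the last estimate using that the $B_i$ are disjoint and contained in $U$. Letting $\varepsilon\to0$ gives $\mu^{*}(A_\alpha)=0$, which completes the argument.

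I expect the genuine content to sit entirely in the Vitali covering lemma --- the only step where the Euclidean geometry of $\mathbb R^c$ (a uniform bound on the overlaps of balls) enters; the remaining steps are routine bookkeeping with the Lebesgue integral. Thus the main obstacle is proving, or cleanly quoting, that lemma; an essentially equivalent option is to invoke the weak type $(1,1)$ estimate for the Hardy--Littlewood maximal operator, which packages the same geometric information. A more streamlined, though less self-contained, route would be to cite the Lebesgue differentiation theorem (itself deduced from the maximal inequality by approximating $L_1$ functions by continuous ones) and apply it to $f=\chi_E$: for almost every $x$ one has $\mu\bigl(B(x,r)\bigr)^{-1}\int_{B(x,r)}\chi_E\,d\mu\to\chi_E(x)$, and for $x\in E$ the right-hand side is $1$, which is exactly the claim.
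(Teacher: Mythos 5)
Your argument is correct and complete modulo the Vitali covering lemma, which you rightly identify as the sole place where the geometry of $\mathbb R^c$ enters. The paper itself offers no proof of this proposition: it is quoted verbatim from Stein, where it appears as a corollary of the Lebesgue differentiation theorem applied to $f=\chi_E$, i.e.\ exactly the ``less self-contained route'' you sketch in your closing remarks, resting ultimately on the weak type $(1,1)$ bound for the Hardy--Littlewood maximal operator. Your main line of argument --- localizing to finite measure, writing the exceptional set as $\bigcup_n A_{1-1/n}$, working with outer measure since $A_\alpha$ need not be measurable, and squeezing $\mu^{*}(A_\alpha)$ between a disjoint Vitali subfamily and $\mu(U\setminus E)$ --- is the standard direct proof and is sound at every step; in particular the reduction to $\mu(E)<\infty$, the inequality $\sum_i\mu(B_i\setminus E)\le\mu(U\setminus E)$ from disjointness, and the final $\varepsilon\to0$ are all handled correctly. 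What the Vitali route buys you is independence from the maximal-function machinery and from the approximation of $L_1$ functions by continuous ones; what the differentiation-theorem route buys is brevity when that theorem is already available, which is why the paper simply cites it. (Note that the paper's own reluctance to use the full differentiation theorem, expressed in Remark~\ref{r:Stein}, concerns the vector-valued setting of Proposition~\ref{p:bar n_r} and has no bearing here, where only scalar characteristic functions are involved.)
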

 \begin{proposition}[{\rm Lusin's theorem~\cite[ch.~4, \S~5 Proposition 1]{Bourbaki Int'egration}}\bf]\label{p:mes func}
Let $X$ be a Banach space. A function $f:\,\mathbb R^c\to X$ is measurable if and only if
for any compact set $K\subset\mathbb R^c$ and any $\varepsilon>0$ there exists a compact
set $K_1\subseteq K$ such that $\mu(K\setminus K_1)<\varepsilon$ and the restriction of
$f$ to $K_1$ is continuous.
 \end{proposition}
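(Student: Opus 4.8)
The plan is to prove the two implications separately; the substantive one is that measurability of $f$ forces the Lusin property, the converse being soft. For the converse, suppose $f$ has the stated property. Writing $\mathbb{R}^c=\bigcup_{j\in\mathbb N}\overline{B(0,j)}$ as a countable union of compact sets reduces the measurability of $f$ to that of each restriction $f|_K$, $K$ compact. Pick, for every $n\in\mathbb N$, a compact $K_n\subseteq K$ with $\mu(K\setminus K_n)<1/n$ and $f|_{K_n}$ continuous. Then $f$ restricted to $\bigcup_n K_n$ is continuous on each member of a countable family of closed sets covering its domain, hence Borel, and it is separably valued because every $f(K_n)$ is compact; since $\mu\bigl(K\setminus\bigcup_n K_n\bigr)=0$ and a measurable function may be left undefined on a null set, $f$ is measurable.

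For the forward implication, fix a measurable $f$, a compact $K\subset\mathbb R^c$ (so $\mu(K)<\infty$), and $\varepsilon>0$. First I would invoke measurability --- directly if it is taken in the strong sense, or via Pettis's theorem otherwise --- to obtain simple functions $s_n\colon\mathbb R^c\to X$ with $s_n(x)\to f(x)$ for all $x$ outside a null set $N_0$. Next I would handle the simple functions: each $s_n$ is a finite $X$-valued combination of indicators $\mathbf1_A$ of measurable sets, and for such an indicator the inner and outer regularity of Lebesgue measure produce a compact $C\subseteq K$ with $\mu(K\setminus C)$ as small as desired such that $C=F\cup G$ with $F,G$ disjoint and closed, $\mathbf1_A\equiv1$ on $F$ and $\mathbf1_A\equiv0$ on $G$; since disjoint closed subsets of $\mathbb R^c$ are separated, $\mathbf1_A|_C$ is continuous. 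Intersecting finitely many such compacta gives, for each $n$, a compact $P_n\subseteq K$ with $\mu(K\setminus P_n)<\varepsilon\,2^{-n-2}$ on which $s_n$ is continuous. Finally, because $\mu(K)<\infty$ and $s_n\to f$ almost everywhere on $K$, Egorov's theorem yields a measurable $Q\subseteq K$ with $\mu(K\setminus Q)<\varepsilon/4$ and $s_n\to f$ uniformly on $Q$.

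To finish, set $E=\bigl(Q\cap\bigcap_n P_n\bigr)\setminus N_0$, so that $\mu(K\setminus E)<\varepsilon/2$; on $E$ every $s_n$ is continuous and $s_n\to f$ uniformly, hence $f|_E$ is continuous as a uniform limit of continuous maps. By inner regularity choose a compact $K_1\subseteq E$ with $\mu(E\setminus K_1)<\varepsilon/2$; then $\mu(K\setminus K_1)<\varepsilon$ and $f|_{K_1}$, a restriction of $f|_E$, is continuous.

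I expect the main difficulty to be bookkeeping rather than conceptual: one must carry the exceptional null set $N_0$ through the whole construction and remove it \emph{before} passing to the compact set $K_1$, so that the continuity of $f|_{K_1}$ holds genuinely, not merely almost everywhere. A secondary point is the reduction from an abstract Banach target to an almost-everywhere limit of simple functions (this is where Pettis's theorem enters), together with the elementary but worth-stating fact that an indicator --- hence any simple function --- is continuous on a suitable large compact set. The Lebesgue density theorem, Proposition~\ref{p:Stein:1:1}, is not needed for this argument.
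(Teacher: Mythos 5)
Your argument is correct, and it is a complete proof of a statement that the paper itself does not prove: Proposition~\ref{p:mes func} is quoted as a known background fact with a reference to Bourbaki, so there is no in-paper proof to compare against. Both directions of your argument check out. The converse is handled properly: continuity on each member of a countable closed cover gives Borel measurability of $f$ on $\bigcup_n K_n$, compactness of each $f(K_n)$ gives essential separable-valuedness, and Pettis then yields strong measurability off a null set, which is all the paper's convention (functions may be undefined on null sets) requires. The forward direction is the standard Egorov-plus-simple-functions proof of Lusin's theorem for Banach-valued maps; the step showing an indicator is continuous on $F\cup G$ with $F,G$ disjoint compacta (each relatively clopen in the union) is right, the measure bookkeeping $\varepsilon/4+\varepsilon/4+\varepsilon/2$ closes, and you correctly strip the exceptional null set $N_0$ before invoking inner regularity. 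The only remark worth adding is one of framework rather than substance: in Bourbaki's integration theory, to which the paper appeals throughout, measurability of a Banach-space-valued function is essentially \emph{defined} by the Lusin property, so the nontrivial content of the proposition is precisely the equivalence with the Bochner notion (a.e.\ limit of simple functions, or Borel measurable and essentially separably valued) --- which is exactly what you prove. You are also right that Lebesgue's density theorem plays no role here.
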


Let $\mathbb E$ be a fixed finite-dimensional Banach space with the norm $|\cdot|$. We
denote by $\mathscr L_p=\mathscr L_p(\mathbb R^c,\mathbb E)$, $1\le p<\infty$, the space
of all measurable functions $u:\,\mathbb R^c\to\mathbb E$ bounded by the semi-norm
\begin{equation*}
\Vert u\Vert=\Vert u\Vert_{L_p}=\Bigl(\int_{\mathbb R^c}|u(x)|^p\,dx\Bigr)^{1/p},
\end{equation*}
and we denote by $\mathscr L_\infty=\mathscr L_\infty(\mathbb R^c,\mathbb E)$ the space
of all measurable essentially bounded functions $u:\,\mathbb R^c\to \mathbb E$ with the
semi-norm
\begin{equation*}
\Vert u\Vert=\Vert u\Vert_{L_\infty}=\esssup|u(x)|.
\end{equation*}
Finally, we denote by $L_p=L_p(\mathbb R^c,\mathbb E)$, $1\le p\le\infty$, the Banach
space of all classes of functions $u\in\mathscr L_p$ with the identification almost
everywhere. For more details, see~\cite{Bourbaki Int'egration,Edwards,Hewitt-Ross}.
Usually they do not distinguish the spaces $\mathscr L_p$ and $L_p$. For our purposes it
is not always convenient. Both the semi-norm on $\mathscr L_p$ and the induced norm
on~$L_p$ are called $L_p$-\emph{norms}.

 \begin{proposition}[{\rm Lebesgue's theorem~\cite[ch.~4, \S~3, 7, Theorem 6]{Bourbaki Int'egration}}\bf]\label{p:Lebesgue}
Let $p<\infty$, $u_i\in\mathscr L_p(\mathbb R^c,\mathbb E)$ converges almost everywhere
to a function $u$, and there exists a nonnegative function $g\in\mathscr L_p(\mathbb
R^c,\mathbb R)$ such that $|u_i(x)|\le g(x)$ for almost all $x$ and all $i$. Then
$u\in\mathscr L_p$ and $u_i$ converges to $u$ in $L_p$-norm.
 \end{proposition}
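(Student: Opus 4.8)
The plan is to reduce this $L_p$ statement to the classical scalar dominated convergence theorem (the case $\mathbb E=\mathbb R$, $p=1$), which one may take as known or, if desired, derive from Fatou's lemma. First I would discard a null set: by hypothesis there is a set $E\subseteq\mathbb R^c$ of full measure on which $u_i(x)\to u(x)$ and on which $|u_i(x)|\le g(x)$ holds for every $i$ simultaneously, since $E$ is obtained from $\mathbb R^c$ by removing the null set where the convergence fails together with the countably many null sets (one for each $i$) where the domination fails. On $E$, continuity of the norm on $\mathbb E$ gives $|u(x)|=\lim_i|u_i(x)|\le g(x)$, hence $|u(x)|^p\le g(x)^p$; since $g\in\mathscr L_p(\mathbb R^c,\mathbb R)$ the scalar function $|u|^p$ is integrable, and $u$ is measurable as an almost-everywhere limit of $\mathbb E$-valued measurable functions (here the finite-dimensionality of $\mathbb E$ enters). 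By the definition of $\mathscr L_p$ this already shows $u\in\mathscr L_p$.

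For the convergence in norm I would put $v_i(x)=|u_i(x)-u(x)|^p$ and $h(x)=2^pg(x)^p$. Each $v_i$ is a nonnegative measurable scalar function, being the composition of the continuous map $t\mapsto t^p$ with the measurable map $x\mapsto|u_i(x)-u(x)|$; on $E$ we have $v_i(x)\to 0$; and the triangle inequality together with the bound on $E$ gives $|u_i(x)-u(x)|\le|u_i(x)|+|u(x)|\le 2g(x)$, so $0\le v_i(x)\le h(x)$ there, with $h\in\mathscr L_1(\mathbb R^c,\mathbb R)$ because $g\in\mathscr L_p$. Applying the scalar dominated convergence theorem to the sequence $(v_i)$ with dominating function $h$ --- equivalently, applying Fatou's lemma to the nonnegative functions $h-v_i$ --- yields $\int_{\mathbb R^c}v_i\,d\mu\to 0$, that is, $\Vert u_i-u\Vert_{L_p}^p\to 0$, and therefore $\Vert u_i-u\Vert_{L_p}\to 0$.

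I do not expect a genuine obstacle in this argument; the only points that need a little care are the bookkeeping of null sets --- the phrase ``for almost all $x$ and all $i$'' forces a countable union of null sets, which is still null --- and the measurability of $u$ and of the functions $v_i$, which is precisely where the assumption that $\mathbb E$ is finite-dimensional is used. Everything else is the standard Fatou / monotone-convergence machinery for nonnegative scalar functions, so no essentially new idea is required beyond passing from the scalar $L_1$ version to the $\mathbb E$-valued $L_p$ version.
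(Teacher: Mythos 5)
Your argument is correct: the paper states this proposition without proof, simply citing Bourbaki, and your reduction to the scalar dominated convergence theorem via the null-set bookkeeping, the domination $|u_i-u|^p\le 2^pg^p$, and Fatou's lemma applied to $h-v_i$ is the standard and complete derivation. The only small inaccuracy is the side remark that finite-dimensionality of $\mathbb E$ is what guarantees measurability of $u$ and of the $v_i$: an almost-everywhere limit of measurable Banach-space-valued functions is measurable in any Banach space, so nothing in your proof actually uses $\dim\mathbb E<\infty$.
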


 \begin{proposition}\label{p:abs ser}
Let $1\le p\le\infty$, $u_i\in\mathscr L_p(\mathbb R^c,\mathbb E)$, and the series
$\sum_{i=1}^\infty u_i$ converges absolutely, i.e. $\sum_{i=1}^\infty\Vert
u_i\Vert_{L_p}<\infty$. Then the series $\sum_{i=1}^\infty u_i(x)$ converges absolutely
at almost all $x$, the function $s(x)=\sum_{i=1}^\infty u_i(x)$ belongs to $\mathscr
L_p$, and the series $\sum_{i=1}^\infty u_i$ converges to $s$ in $L_p$-norm.
 \end{proposition}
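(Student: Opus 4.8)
The plan is to treat $p<\infty$ and $p=\infty$ separately, since the dominated convergence statement (Proposition~\ref{p:Lebesgue}) is only available for $p<\infty$; in both cases the point is to reduce the asserted $L_p$-convergence to pointwise almost everywhere absolute convergence of $\sum_i u_i(x)$ in the complete space $\mathbb E$. For $p<\infty$ I would set $g_n(x)=\sum_{i=1}^n|u_i(x)|$, a nondecreasing sequence of nonnegative measurable functions with $\Vert g_n\Vert_{L_p}\le\sum_{i=1}^n\Vert u_i\Vert_{L_p}\le C:=\sum_{i=1}^\infty\Vert u_i\Vert_{L_p}<\infty$ by the triangle inequality in $\mathscr L_p$. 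The monotone convergence theorem then shows that the pointwise limit $g(x)=\sum_{i=1}^\infty|u_i(x)|\in[0,+\infty]$ is measurable and satisfies $\int g^p\le C^p$; hence $g$ is finite almost everywhere and $g\in\mathscr L_p(\mathbb R^c,\mathbb R)$. Wherever $g(x)<\infty$, the series $\sum_i u_i(x)$ converges absolutely, hence converges, in $\mathbb E$, so $s(x)=\sum_{i=1}^\infty u_i(x)$ is defined on a set of full measure. The partial sums $s_n=\sum_{i=1}^n u_i$ are measurable, tend to $s$ almost everywhere, and are dominated by $g\in\mathscr L_p$; applying Proposition~\ref{p:Lebesgue} to $(s_n)$ gives $s\in\mathscr L_p$, and applying it to $(s-s_n)$ (dominated by $g$, tending to $0$ a.e.) gives $\Vert s-s_n\Vert_{L_p}\to0$.

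For $p=\infty$ I would argue directly, being careful that each $u_i$ may be undefined on a null set. For each $i$ pick a set $E_i$ of full measure on which $|u_i(x)|\le\Vert u_i\Vert_{L_\infty}$ and put $E=\bigcap_{i=1}^\infty E_i$, which is still of full measure. On $E$ one has $\sum_{i=1}^\infty|u_i(x)|\le\sum_{i=1}^\infty\Vert u_i\Vert_{L_\infty}=C<\infty$, so $s(x)=\sum_{i=1}^\infty u_i(x)$ is well defined for $x\in E$; being an almost everywhere limit of the measurable functions $s_n$, $s$ is measurable, and $|s(x)|\le C$ on $E$, so $s\in\mathscr L_\infty$. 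Since $|s(x)-s_n(x)|\le\sum_{i>n}\Vert u_i\Vert_{L_\infty}\to0$ uniformly on $E$, we obtain $\Vert s-s_n\Vert_{L_\infty}\to0$.

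The substantive ingredients --- the triangle inequality in $\mathscr L_p$, the monotone and dominated convergence theorems, and the completeness of $\mathbb E$ --- are all standard, so I expect no serious obstacle. The one point requiring genuine care is the handling of null sets in the case $p=\infty$: because a measurable function is allowed to be undefined off a set of full measure, one must first pass to the common full-measure set $E$ before forming the series $\sum_i u_i(x)$ pointwise, and only afterwards verify the measurability and the $L_\infty$-estimate of the resulting representative $s$; for $p<\infty$ this subtlety is absorbed into the convergence theorems.
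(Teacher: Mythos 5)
Your proof is correct. Note, however, that the paper does not actually give an argument here: for $p<\infty$ it simply cites Bourbaki (\emph{Int\'egration}, ch.~4, \S~3, 3, Proposition~6), and it declares the case $p=\infty$ evident. What you have written is the standard Riesz--Fischer-type argument that underlies the cited result: the monotone convergence theorem applied to $g_n=\sum_{i\le n}|u_i|$ to produce an $\mathscr L_p$ majorant $g$ finite almost everywhere, completeness of $\mathbb E$ to define $s$ pointwise, and then Proposition~\ref{p:Lebesgue} for membership in $\mathscr L_p$ and norm convergence (a single application of Proposition~\ref{p:Lebesgue} to the partial sums already yields both conclusions, so your second application to $s-s_n$ is redundant but harmless). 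Your separate treatment of $p=\infty$, with the passage to a common full-measure set $E$ before forming the pointwise series, is exactly the care the paper's convention (measurable functions may be undefined on null sets) calls for, and it is more than the paper itself supplies. So the proposal is a correct, self-contained expansion of a proof the paper delegates to a reference; there is no gap.
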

 \begin{proof}
For the case $p<\infty$, e.g. see~\cite[ch.~4, \S~3, 3, Proposition 6]{Bourbaki
Int'egration}. The case $p=\infty$ is evident.
 \end{proof}

 \begin{proposition}[{\rm Fubini's theorem,~\cite[ch.~5, \S~8, 4]{Bourbaki Int'egration}}\bf]\label{p:Fubini}
Let $X$ be an arbitrary Banach space.

If $n\in\mathscr L_1(\mathbb R^c\times\mathbb R^c,X)$, then for almost all $x\in\mathbb
R^c$ the function
\begin{equation*}
y\mapsto n(x,y)
\end{equation*}
is defined for almost all $y\in\mathbb R^c$ and belongs to $\mathscr L_1(\mathbb
R^c,X)${\rm;} the function
\begin{equation*}
x\mapsto\int_{\mathbb R^c}n(x,y)\,dy
\end{equation*}
is defined for almost all $x\in\mathbb R^c$ and belongs to $\mathscr L_1(\mathbb
R^c,X)${\rm;} and
\begin{equation*}
\iint_{\mathbb R^c\times\mathbb R^c}n(x,y)\,dx\,dy=\int_{\mathbb R^c}\Bigl(\int_{\mathbb
R^c}n(x,y)\,dy\Bigr)\,dx.
\end{equation*}

If $n:\,\mathbb R^c\times\mathbb R^c\to X$ is measurable and
\begin{equation*}
\int_{\mathbb R^c}\Bigl(\int_{\mathbb R^c}\Vert n(x,y)\Vert\,dy\Bigr)\,dx<\infty,
\end{equation*}
then $n\in\mathscr L_1(\mathbb R^c\times\mathbb R^c,X)$.
 \end{proposition}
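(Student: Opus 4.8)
The plan is to reduce to the classical scalar Fubini--Tonelli theorem --- which is why the statement is quoted rather than reproved --- so I will sketch the reduction together with, briefly, the underlying scalar argument. In the scalar case ($X=\mathbb R$) the second assertion is obtained by first checking the identity $\iint n(x,y)\,dx\,dy=\int\bigl(\int n(x,y)\,dy\bigr)\,dx$ for characteristic functions of measurable rectangles $A\times B$ of finite measure, where it is trivial, then extending it by linearity to nonnegative simple functions, and finally to an arbitrary nonnegative measurable $n$ by applying the monotone convergence theorem to an increasing sequence of simple functions approximating $n$. This identity holds in $[0,+\infty]$, so finiteness of the right-hand side for $\|n\|$ forces $\|n\|\in\mathscr L_1(\mathbb R^c\times\mathbb R^c)$, whence $n\in\mathscr L_1$ because $n$ is measurable. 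For the first assertion in the scalar case one writes $n=n^+-n^-$ (four nonnegative pieces over $\mathbb C$), applies the previous step to each piece so that all the iterated integrals in sight are finite for almost every $x$, and subtracts; measurability of $x\mapsto\int n(x,y)\,dy$ is read off from the same approximation.

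Next one passes from $\mathbb R$ to a general Banach space $X$. Given $n\in\mathscr L_1(\mathbb R^c\times\mathbb R^c,X)$, the nonnegative scalar function $(x,y)\mapsto\|n(x,y)\|$ lies in $\mathscr L_1$, so by the scalar result $\int_{\mathbb R^c}\|n(x,y)\|\,dy<\infty$ for $x$ in a set of full measure. For such $x$ the section $y\mapsto n(x,y)$ is measurable (see the obstacle below) and is dominated by the integrable function $y\mapsto\|n(x,y)\|$, hence lies in $\mathscr L_1(\mathbb R^c,X)$; thus $I(x)=\int_{\mathbb R^c}n(x,y)\,dy$ is well defined, satisfies $\|I(x)\|\le\int_{\mathbb R^c}\|n(x,y)\|\,dy$, and is measurable and integrable. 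The equality of the double and the iterated integral then follows by testing against an arbitrary $\varphi\in X^*$: since $\varphi$ commutes with the Bochner integral, the scalar Fubini applied to $\varphi\circ n$ gives $\varphi\bigl(\iint n\bigr)=\int\varphi\bigl(I(x)\bigr)\,dx=\varphi\bigl(\int I\bigr)$, and since $\varphi$ is arbitrary the Hahn--Banach theorem closes the argument. The second assertion for general $X$ is then immediate from the nonnegative scalar case established above.

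The main obstacle is the measurability bookkeeping in the vector-valued setting --- verifying that $y\mapsto n(x,y)$ is measurable for almost every $x$ and that $x\mapsto\int_{\mathbb R^c}n(x,y)\,dy$ is measurable --- because the product-$\sigma$-algebra manipulations that make this routine for scalars are not directly available for $X$-valued functions. I would handle it by using that a measurable function in the sense of Proposition~\ref{p:mes func} is, on each compact set, a uniform limit off a small set of countably-valued functions, for which the measurability of sections and of the partial integral is elementary, and then removing the exceptional sets with the dominated convergence theorem (Proposition~\ref{p:Lebesgue}).
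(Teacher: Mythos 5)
This proposition is quoted from Bourbaki and the paper offers no proof of its own, so there is no in-paper argument to compare against; judged on its own terms, your reduction is the standard proof of vector-valued Fubini and is correct. The scalar Fubini--Tonelli step (rectangles, simple functions, monotone convergence), the passage to Bochner integrals by dominating the sections with $y\mapsto\Vert n(x,y)\Vert$, and the identification of the double with the iterated integral by testing against $\varphi\in X^*$ and invoking Hahn--Banach are all sound; in the last step one should note that the exceptional null set is fixed in advance by the scalar Tonelli argument for $\Vert n\Vert$ and does not depend on $\varphi$, which your ordering of the steps already ensures. You also correctly single out the only genuinely delicate point, the strong measurability of almost every section and of $x\mapsto\int n(x,y)\,dy$, and the approximation by countably-valued functions combined with the null-section corollary of the scalar case (Corollary~\ref{c:Fubini}) and dominated convergence (Proposition~\ref{p:Lebesgue}) is exactly how this is handled in the Bochner/Bourbaki framework.
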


 \begin{corollary}\label{c:Fubini}
A measurable subset $E\subset\mathbb R^c\times\mathbb R^c$ has measure zero if and only
if for almost all $x\in\mathbb R^c$ the set $E_x=\{\,y\in\mathbb R^c:\,(x,y)\in E\,\}$
has measure zero.
 \end{corollary}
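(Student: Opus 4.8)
The plan is to reduce both implications to Fubini's theorem (Proposition~\ref{p:Fubini}) applied to the characteristic function $\chi_E$ of $E$. This function is a measurable real‑valued function on $\mathbb R^c\times\mathbb R^c$ precisely because $E$ is measurable, and for every $x$ its section $y\mapsto\chi_E(x,y)$ coincides with $\chi_{E_x}$, so that measurability of $E_x$ is equivalent to measurability of that section, and $\int_{\mathbb R^c}\chi_E(x,y)\,dy=\mu(E_x)$ whenever $E_x$ is measurable. I would then treat the two directions separately, using a different half of Proposition~\ref{p:Fubini} for each.

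For the implication ``$\mu(E)=0\implies\mu(E_x)=0$ a.e.'', note that if $\mu(E)=0$ then $\chi_E$ vanishes off a null set, hence $\chi_E\in\mathscr L_1(\mathbb R^c\times\mathbb R^c,\mathbb R)$ with $\iint\chi_E=0$. The first part of Proposition~\ref{p:Fubini} then gives that for almost all $x$ the section $y\mapsto\chi_E(x,y)=\chi_{E_x}(y)$ lies in $\mathscr L_1(\mathbb R^c,\mathbb R)$ (in particular $E_x$ is measurable for almost all $x$), that $x\mapsto\int_{\mathbb R^c}\chi_{E_x}(y)\,dy=\mu(E_x)$ lies in $\mathscr L_1$, and that $\int_{\mathbb R^c}\mu(E_x)\,dx=\iint\chi_E=0$. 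Since $x\mapsto\mu(E_x)$ is nonnegative and its integral vanishes, it is zero almost everywhere, i.e.\ $\mu(E_x)=0$ for almost all $x$.

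For the converse, suppose $\mu(E_x)=0$ for almost all $x$. Then $E_x$ is measurable for almost all such $x$, so the inner integral $\int_{\mathbb R^c}\chi_E(x,y)\,dy=\mu(E_x)$ is defined and equal to $0$ for almost all $x$; in particular $\int_{\mathbb R^c}\bigl(\int_{\mathbb R^c}\lvert\chi_E(x,y)\rvert\,dy\bigr)\,dx=0<\infty$. By the second part of Proposition~\ref{p:Fubini}, $\chi_E\in\mathscr L_1(\mathbb R^c\times\mathbb R^c,\mathbb R)$, and then the first part yields $\mu(E)=\iint\chi_E=\int_{\mathbb R^c}\bigl(\int_{\mathbb R^c}\chi_E(x,y)\,dy\bigr)\,dx=\int_{\mathbb R^c}\mu(E_x)\,dx=0$.

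The argument is essentially bookkeeping, and I do not anticipate a genuine obstacle; the only points needing attention are the identification of $\chi_E(x,\cdot)$ with $\chi_{E_x}$ together with the measurability of almost every section (supplied by the appropriate half of Proposition~\ref{p:Fubini} in the first direction and directly by the hypothesis in the second), and the elementary fact that a nonnegative function in $\mathscr L_1$ with vanishing integral is zero almost everywhere.
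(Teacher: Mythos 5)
Your proposal is correct and is exactly the intended derivation: the paper states this as an immediate corollary of Proposition~\ref{p:Fubini} without writing out a proof, and applying both halves of that proposition to the characteristic function $\chi_E$ (using that a nonnegative $\mathscr L_1$ function with vanishing integral is zero almost everywhere) is the standard way to fill it in. No gaps.
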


 \begin{proposition}\label{p:sequence in L_1} Let $X$ be a Banach space
and a sequence $u_k\in\mathscr L_1(\mathbb R^c,X)$ converge to $u_0\in\mathscr
L_1(\mathbb R^c,X)$ in norm. Then there exists a subsequence $u_{k_i}$ that converges to
$u_0$ almost everywhere.
 \end{proposition}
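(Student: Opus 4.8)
The plan is to deduce the statement from Proposition~\ref{p:abs ser} by means of a telescoping series. First I would use the norm convergence of $(u_k)$ to produce a rapidly convergent subsequence: since $(u_k)$ converges, it is a Cauchy sequence in the $L_1$-seminorm, and therefore one can choose indices $k_1<k_2<\dots$ with $\Vert u_{k_{i+1}}-u_{k_i}\Vert_{L_1}\le 2^{-i}$ for every $i$.

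Next I would apply Proposition~\ref{p:abs ser} with $p=1$ to the series $\sum_{i=1}^\infty\bigl(u_{k_{i+1}}-u_{k_i}\bigr)$. Its terms lie in $\mathscr L_1(\mathbb R^c,X)$ and $\sum_{i=1}^\infty\Vert u_{k_{i+1}}-u_{k_i}\Vert_{L_1}\le\sum_{i=1}^\infty 2^{-i}=1<\infty$, so the series $\sum_{i=1}^\infty\bigl(u_{k_{i+1}}(x)-u_{k_i}(x)\bigr)$ converges absolutely for almost every $x$, its sum $s$ belongs to $\mathscr L_1$, and the series converges to $s$ in the $L_1$-seminorm. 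Since the $N$-th partial sum telescopes, $\sum_{i=1}^{N}\bigl(u_{k_{i+1}}(x)-u_{k_i}(x)\bigr)=u_{k_{N+1}}(x)-u_{k_1}(x)$, it follows that for almost all $x$ the subsequence $u_{k_{N+1}}(x)$ converges to $v(x):=s(x)+u_{k_1}(x)$ as $N\to\infty$, and moreover $u_{k_{N+1}}\to v$ in the $L_1$-seminorm.

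It remains to identify $v$ with $u_0$. The subsequence $u_{k_i}$ converges to $u_0$ in the $L_1$-seminorm, being a subsequence of a norm-convergent sequence, and it also converges to $v$ in that seminorm by the previous step. Hence, by the triangle inequality, $\Vert v-u_0\Vert_{L_1}\le\Vert v-u_{k_i}\Vert_{L_1}+\Vert u_{k_i}-u_0\Vert_{L_1}\to 0$, so $\Vert v-u_0\Vert_{L_1}=0$, i.e.\ $v(x)=u_0(x)$ for almost all $x$. Therefore $u_{k_i}(x)\to u_0(x)$ for almost all $x$, as claimed.

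The main obstacle here is essentially nil: this is a classical fact, and the only point requiring a little care is the passage from convergence in the $L_1$-seminorm to a genuine pointwise almost everywhere identification of the limit, which is taken care of by the triangle inequality for the seminorm together with Proposition~\ref{p:abs ser}. The fact that the extracted indices may be taken strictly increasing, so that $u_{k_i}$ is honestly a subsequence, is automatic from the Cauchy property.
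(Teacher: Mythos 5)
Your proof is correct and is essentially the argument the paper has in mind: the paper merely cites a Bourbaki theorem together with Proposition~\ref{p:abs ser}, and your telescoping-series construction (rapidly Cauchy subsequence, apply Proposition~\ref{p:abs ser} to $\sum_i(u_{k_{i+1}}-u_{k_i})$, identify the a.e.\ limit with $u_0$ via vanishing of the seminorm) is the standard way to fill in exactly that citation.
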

 \begin{proof}
This is a consequence of~\cite[ch.~4, \S~3, 4, Theorem 3]{Bourbaki Int'egration} and
Proposition~\ref{p:abs ser}.
 \end{proof}

\section{The class $\mathoo N_1$}\label{s:N_1}
We denote by $\mathoo N_1=\mathoo N_1(\mathbb R^c,\mathbb E)$ the set of all measurable
functions $n:\,\mathbb R^c\times\mathbb R^c\to\mathoo B(\mathbb E)$ satisfying the
property: there exists a function $\beta\in\mathscr L_1(\mathbb R^c,\mathbb R)$ such that
for almost all $(x,y)\in\mathbb R^c\times\mathbb R^c$
\begin{equation}\label{e:est via beta}
\Vert n(x,y)\Vert\le\beta(y).
\end{equation}
For convenience (without loss of generality), we assume that $\beta$ is defined
everywhere. Kernels of the class $\mathoo N_1$ and the operators induced by them were
considered in~\cite{Beltita,Farrell-Strohmer}, \cite[\S~5.4]{Kurbatov99},
and~\cite{Kurbatov01}. In order to show that the notation used
in~\cite[\S~5.4]{Kurbatov99} and~\cite{Kurbatov01} is equivalent to the notation used in
the present paper, we note the following Proposition.

 \begin{proposition}\label{p:N acts in L_p}
The function $n:\,\mathbb R^c\times\mathbb R^c\to\mathoo B(\mathbb E)$ is measurable if
and only if the functions $n_1(x,y)=n(x,x-y)$ is measurable.
 \end{proposition}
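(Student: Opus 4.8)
The statement to prove is Proposition~\ref{p:N acts in L_p}: that the map $n\colon\mathbb R^c\times\mathbb R^c\to\mathoo B(\mathbb E)$ is measurable if and only if $n_1$, defined by $n_1(x,y)=n(x,x-y)$, is measurable. Since the change of variables $(x,y)\mapsto(x,x-y)$ is its own inverse up to sign in the second slot — more precisely, the affine map $\Phi\colon(x,y)\mapsto(x,x-y)$ on $\mathbb R^c\times\mathbb R^c$ satisfies $\Phi\circ\Phi=\mathrm{id}$ and $n_1=n\circ\Phi$, $n=n_1\circ\Phi$ — it suffices to prove one implication; the other follows by symmetry. So the plan is to show: if $n$ is measurable, then $n_1=n\circ\Phi$ is measurable.

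The approach I would take relies on Lusin's theorem (Proposition~\ref{p:mes func}), which is already available in the excerpt and characterizes measurability intrinsically (without reference to pre-images of Borel sets, which is convenient because $\mathoo B(\mathbb E)$ is just a finite-dimensional normed space here). Fix a compact set $K\subset\mathbb R^c\times\mathbb R^c$ and $\varepsilon>0$. The set $\Phi(K)$ is again compact, since $\Phi$ is continuous (in fact a linear homeomorphism). By Lusin's theorem applied to $n$ on $\Phi(K)$, there is a compact $K_1\subseteq\Phi(K)$ with $\mu_{2c}\bigl(\Phi(K)\setminus K_1\bigr)<\varepsilon$ on which $n$ is continuous. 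Pull back: set $K_1'=\Phi^{-1}(K_1)=\Phi(K_1)\subseteq K$. Then $K_1'$ is compact, $n_1|_{K_1'}=n|_{K_1}\circ\Phi$ is continuous as a composition of continuous maps, and — this is the one quantitative point — $\mu_{2c}(K\setminus K_1')<\varepsilon$ because $\Phi$ preserves Lebesgue measure on $\mathbb R^{2c}$. Indeed, the Jacobian matrix of $\Phi$ is the block matrix $\begin{pmatrix}I&0\\ I&-I\end{pmatrix}$, whose determinant is $(-1)^c$, so $|\det D\Phi|=1$ and $\mu_{2c}\circ\Phi=\mu_{2c}$. Hence $\mu_{2c}(K\setminus K_1')=\mu_{2c}\bigl(\Phi(\Phi(K)\setminus K_1)\bigr)=\mu_{2c}\bigl(\Phi(K)\setminus K_1\bigr)<\varepsilon$. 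By Lusin's theorem in the reverse direction, $n_1$ is measurable. Applying the same argument with the roles of $n$ and $n_1$ exchanged (using $\Phi\circ\Phi=\mathrm{id}$) gives the converse, completing the equivalence.

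I expect no genuine obstacle here; the proof is a routine transport-of-structure argument. The only two things that need care are: (i) confirming that $\Phi$ is a bijective bi-Lipschitz (indeed linear bijective) self-map of $\mathbb R^c\times\mathbb R^c$, so that it maps compact sets to compact sets and Borel sets to Borel sets in both directions, and (ii) the Jacobian computation showing $\Phi$ is measure-preserving, which is what lets the $\varepsilon$ from Lusin's theorem pass through unchanged. If one preferred to avoid Lusin's theorem, an alternative is to observe that $\Phi$ is continuous hence Borel, so $n$ Borel-measurable implies $n_1=n\circ\Phi$ Borel-measurable immediately, and then handle the Lebesgue-vs-Borel completion issue by noting that $\Phi$ maps null sets to null sets (again by the Jacobian computation) so it respects Lebesgue measurability as well; but since the paper has already set up Lusin's theorem and uses the "may be undefined on a null set" convention, the Lusin-based argument is the cleaner fit and is the one I would write out.
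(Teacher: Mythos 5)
Your proof is correct and self-contained. The paper does not actually write out an argument for this proposition---it only says the proof is a word-for-word repetition of the proof of Lemma~4.1.5 in Kurbatov's book---so your Lusin-based argument supplies exactly the details that the paper delegates to that reference: the two points you single out for care (that $\Phi(x,y)=(x,x-y)$ is a linear involution, hence carries compact sets to compact sets and lets one implication yield the other, and that $|\det D\Phi|=1$, so the $\varepsilon$ in Lusin's theorem passes through the change of variables unchanged) are precisely the points that need checking, and you handle both correctly.
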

 \begin{proof}
The proof can be obtained as a word to word repetition of the proof
of~\cite[Lemma~4.1.5]{Kurbatov99}.
 \end{proof}

%We recall some properties of the class $\mathoo N_1$.

 \begin{proposition}[{\rm\cite[Proposition~5.4.3]{Kurbatov99}}\bf]\label{p:5.4.3}
For any $n\in\mathoo N_1(\mathbb R^c,\mathbb E)$, the operator
 \begin{equation}\label{e:operator N}
\bigl(Nu\bigr)(x)=\int_{\mathbb R^c}n(x,x-y)\,u(y)\,dy
 \end{equation}
acts in $L_p(\mathbb R^c,\mathbb E)$ for all $1\le p\le\infty$. More precisely, for any
$u\in\mathscr L_p(\mathbb R^c,\mathbb E)$ the function $y\mapsto n(x,x-y)\,u(y)$ is
integrable for almost all $x$, and the function $Nu$ belongs to $\mathscr L_p(\mathbb
R^c,\mathbb E)${\rm;} if $u_1$ and $u_2$ coincide almost everywhere, then $Nu_1$ and
$Nu_2$ also coincide almost everywhere. Besides,
 \begin{equation}\label{e:norm of N}
\Vert N:\,L_p\to L_p\Vert\le\Vert\beta\Vert_{L_1}.
 \end{equation}
 \end{proposition}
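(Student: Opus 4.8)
The plan is to reduce the statement to the scalar convolution (Young) inequality $\Vert\beta*|u|\Vert_{L_p}\le\Vert\beta\Vert_{L_1}\Vert u\Vert_{L_p}$ and then to settle the measurability points by exhausting $\mathbb R^c$ by balls. First I would introduce the auxiliary scalar functions $\psi(x,y)=\beta(x-y)\,|u(y)|$ and $\Phi(x)=\int_{\mathbb R^c}\psi(x,y)\,dy\in[0,+\infty]$. Since $\beta$ is measurable, the function $(x,y)\mapsto\beta(x-y)$ is measurable on $\mathbb R^c\times\mathbb R^c$ --- just as in the proof of Proposition~\ref{p:N acts in L_p}: passing to a Borel representative of $\beta$ alters $(x,y)\mapsto\beta(x-y)$ only on a subset of a set of the form $\{(x,y):x-y\in Z\}$ with $Z\subset\mathbb R^c$ null, and the $x$-sections of such a set are translates/reflections of $Z$, hence it is null by Corollary~\ref{c:Fubini} --- so $\psi$ is measurable and therefore so is $\Phi$.

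Next, the bound $\Vert\Phi\Vert_{L_p}\le\Vert\beta\Vert_{L_1}\Vert u\Vert_{L_p}$ is obtained in the classical way. For $1<p<\infty$, put $p'=p/(p-1)$ and split $\beta(x-y)=\beta(x-y)^{1/p'}\beta(x-y)^{1/p}$; H\"older's inequality together with $\int_{\mathbb R^c}\beta(x-y)\,dy=\Vert\beta\Vert_{L_1}$ gives $\Phi(x)^p\le\Vert\beta\Vert_{L_1}^{p/p'}\int_{\mathbb R^c}\beta(x-y)\,|u(y)|^p\,dy$, and integrating in $x$, using Fubini's theorem and $\int_{\mathbb R^c}\beta(x-y)\,dx=\Vert\beta\Vert_{L_1}$, yields $\Vert\Phi\Vert_{L_p}^p\le\Vert\beta\Vert_{L_1}^{p/p'+1}\Vert u\Vert_{L_p}^p=\Vert\beta\Vert_{L_1}^p\Vert u\Vert_{L_p}^p$. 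The case $p=1$ is Fubini's theorem directly, and the case $p=\infty$ is immediate from $|u(y)|\le\Vert u\Vert_{L_\infty}$ for almost all $y$. In particular $\Phi\in\mathscr L_p(\mathbb R^c,\mathbb R)$, so $\Phi(x)<\infty$ for almost all $x$.

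Now I would pass to the $\mathbb E$-valued integrand $\phi(x,y)=n(x,x-y)\,u(y)$. By Proposition~\ref{p:N acts in L_p} the map $(x,y)\mapsto n(x,x-y)$ is measurable, and since $\mathbb E$ is finite-dimensional the evaluation $\mathoo B(\mathbb E)\times\mathbb E\to\mathbb E$ is continuous, so $\phi$ is a measurable $\mathbb E$-valued function with $|\phi(x,y)|\le\psi(x,y)$. For each $R\in\mathbb N$ the nonnegative function $\psi\cdot\mathbf{1}_{\mathbb R^c\times B(0,R)}$ has finite integral over $\mathbb R^c\times\mathbb R^c$ (because $|u|\,\mathbf{1}_{B(0,R)}\in\mathscr L_1$ by H\"older's inequality and convolution with $\beta$ maps $\mathscr L_1$ into $\mathscr L_1$), so $\phi\cdot\mathbf{1}_{\mathbb R^c\times B(0,R)}\in\mathscr L_1(\mathbb R^c\times\mathbb R^c,\mathbb E)$, and Proposition~\ref{p:Fubini} shows that for almost all $x$ the function $y\mapsto\phi(x,y)\,\mathbf{1}_{B(0,R)}(y)$ is integrable and that $x\mapsto\int_{B(0,R)}\phi(x,y)\,dy$ is measurable. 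Letting $R$ run through $\mathbb N$, it follows that for almost all $x$ --- in particular those with $\Phi(x)<\infty$ --- the function $y\mapsto\phi(x,y)$ is measurable and dominated by $\psi(x,\cdot)\in\mathscr L_1$, hence integrable; thus $(Nu)(x)=\int_{\mathbb R^c}\phi(x,y)\,dy$ is well defined with $|(Nu)(x)|\le\Phi(x)$, and, by dominated convergence, $(Nu)(x)=\lim_{R\to\infty}\int_{B(0,R)}\phi(x,y)\,dy$, so $Nu$ is measurable. Consequently $Nu\in\mathscr L_p$ and $\Vert Nu\Vert_{L_p}\le\Vert\Phi\Vert_{L_p}\le\Vert\beta\Vert_{L_1}\Vert u\Vert_{L_p}$, which is~\eqref{e:norm of N}.

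Finally, if $u_1=u_2$ almost everywhere, then $\{(x,y):u_1(y)\ne u_2(y)\}$ has null $x$-sections, hence is null by Corollary~\ref{c:Fubini}; therefore $n(x,x-y)\,u_1(y)=n(x,x-y)\,u_2(y)$ for almost all $(x,y)$, and Corollary~\ref{c:Fubini} applied once more gives that for almost all $x$ the $y$-sections agree almost everywhere, so $(Nu_1)(x)=(Nu_2)(x)$ for almost all $x$. I expect the only genuine obstacle to be the careful bookkeeping of these null-set and measurability arguments --- especially the measurability of $x\mapsto(Nu)(x)$, which for $p>1$ cannot be read off directly from Fubini's theorem on $\mathbb R^c\times\mathbb R^c$ since $\phi$ need not be integrable there, and must be extracted from the exhaustion above; the analytic core, Young's inequality, is entirely routine.
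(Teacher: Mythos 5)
Your proof is correct: the reduction to Young's inequality via the majorant $\Phi=\beta*|u|$, the exhaustion by balls to extract measurability of $x\mapsto(Nu)(x)$ from Fubini's theorem, and the null-set bookkeeping for well-definedness on $L_p$ are all sound. The paper itself gives no proof of this proposition (it is imported verbatim from \cite[Proposition~5.4.3]{Kurbatov99}), so there is nothing in the text to compare against, but your argument is the standard one for operators dominated by a convolution and is exactly the kind of proof the cited reference supplies.
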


We denote the set of all operators $N\in\mathoo B(L_p)$, $1\le p\le\infty$, of the
form~\eqref{e:operator N} by $\mathoo N_1=\mathoo N_1(L_p)$.

 \begin{proposition}\label{p:n and n_1}
If two functions $n,n_1\in\mathoo N_1(\mathbb R^c,\mathbb E)$ coincide almost everywhere
on $\mathbb R^c\times\mathbb R^c$, then they induce the same operator~\eqref{e:operator
N}.
 \end{proposition}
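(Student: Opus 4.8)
The plan is to reduce the statement to a single application of Fubini's theorem, once one observes that the change of variables $(x,y)\mapsto(x,x-y)$ preserves the Lebesgue measure. Set $d=n-n_1$. Since $n$ and $n_1$ are majorized by functions $\beta,\beta_1\in\mathscr L_1$, the function $d$ is majorized by $\beta+\beta_1\in\mathscr L_1$, so $d\in\mathoo N_1(\mathbb R^c,\mathbb E)$; by Proposition~\ref{p:5.4.3} it induces an operator $D$ of the form~\eqref{e:operator N} acting in every $L_p$, and by linearity of the integral (which converges absolutely for almost every $x$, again by Proposition~\ref{p:5.4.3}) one has $D=N-N_1$. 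Hence it suffices to prove that $D=0$.

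Next I consider the linear bijection $\Phi\colon\mathbb R^c\times\mathbb R^c\to\mathbb R^c\times\mathbb R^c$ given by $\Phi(x,y)=(x,x-y)$. Its Jacobian has determinant of absolute value $1$, so $\Phi$ is measure preserving, and in particular maps null sets onto null sets. By hypothesis the set $E=\{\,(x,y):\,n(x,y)\neq n_1(x,y)\,\}$ has measure zero in $\mathbb R^c\times\mathbb R^c$; moreover the function $d_1(x,y)=d(x,x-y)$ is measurable by Proposition~\ref{p:N acts in L_p}, so $\{\,(x,y):\,d_1(x,y)\neq0\,\}=\Phi^{-1}(E)$ is a measurable set of measure zero.

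Now I apply Corollary~\ref{c:Fubini} to $\Phi^{-1}(E)$: for almost every $x\in\mathbb R^c$ the section $\{\,y:\,d(x,x-y)\neq0\,\}$ has measure zero. Fix such an $x$ and an arbitrary representative $u\in\mathscr L_p(\mathbb R^c,\mathbb E)$. Then the integrand $y\mapsto d(x,x-y)\,u(y)$ vanishes for almost all $y$, whence $(Du)(x)=\int_{\mathbb R^c}d(x,x-y)\,u(y)\,dy=0$. Therefore $Du=0$ almost everywhere, i.e. $Du=0$ in $L_p$; since $u$ was arbitrary, $D=0$, that is $N=N_1$. The only delicate point is precisely the passage from ``$n$ and $n_1$ agree almost everywhere on the product space'' to ``for almost every fixed $x$ the integrand vanishes for almost all $y$''; this is where the measure-preserving property of $\Phi$ together with Corollary~\ref{c:Fubini} are essential, and it is the reason a naive pointwise argument does not suffice.
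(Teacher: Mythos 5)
Your proof is correct and takes essentially the same route as the paper's: both reduce the claim, via Corollary~\ref{c:Fubini}, to the fact that for almost every $x$ the two integrands coincide for almost all $y$, using Proposition~\ref{p:5.4.3} to guarantee the integrals make sense. The only difference is presentational: you pass to the difference kernel $d=n-n_1$ and make explicit the measure-preserving change of variables $(x,y)\mapsto(x,x-y)$, a step the paper leaves implicit when it transfers the null set where $n\neq n_1$ to the null set where $n(x,x-y)\neq n_1(x,x-y)$.
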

 \begin{proof}
Indeed, for any $u\in\mathscr L_p$, by Proposition~\ref{p:5.4.3} and
Corollary~\ref{c:Fubini}, for almost all $x\in\mathbb R^c$ the functions
$v(y)=n(x,x-y)\,u(y)$ and $v_1(y)=n_1(x,x-y)\,u(y)$ coincide almost everywhere. Therefore
$Nu$ and $N_1u$ coincide almost everywhere.
 \end{proof}

 \begin{theorem}[{\rm\cite[Theprem~5.4.7]{Kurbatov99}}\bf]\label{t:5.4.7}
Let $N\in\mathoo N_1(L_p)$, $1\le p\le\infty$. If the operator $\mathbf1+N$ is
invertible, then $(\mathbf1+N)^{-1}=\mathbf1+M$, where $M\in\mathoo N_1(L_p)$.
 \end{theorem}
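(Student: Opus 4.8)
The plan is to realize $\mathoo N_1$ as a Banach algebra that maps, for each $p$, onto a subalgebra of $\mathoo B(L_p)$, to reduce the statement to the inverse-closedness of that subalgebra, to settle the small-norm case by a Neumann series, to reduce the general case to a kernel with a ``locally bounded, globally summable'' majorant, and finally to run a Wiener-type argument and descend back to a kernel estimate.

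\emph{The algebra and the reduction.} For $n\in\mathoo N_1$ put $\beta_n(y)=\esssup_x\Vert n(x,y)\Vert$; a Fubini--Tonelli argument shows $\beta_n$ is measurable (for each $t$ the map $y\mapsto\mu\{x:\Vert n(x,y)\Vert>t\}$ is measurable by Proposition~\ref{p:Fubini}, and $\{y:\beta_n(y)<\alpha\}=\bigcup_{t\in\mathbb Q,\ t<\alpha}\{y:\mu\{x:\Vert n(x,y)\Vert>t\}=0\}$), and $\beta_n$ is the pointwise smallest majorant in~\eqref{e:est via beta}; set $\Vert n\Vert_{\mathoo N_1}=\Vert\beta_n\Vert_{L_1}$. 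A change of variables in~\eqref{e:operator N} shows that for $n_1,n_2\in\mathoo N_1$ the composition $N_1N_2$ is again of the form~\eqref{e:operator N} with majorant $\beta_{n_1}*\beta_{n_2}$, so by Young's inequality $\mathoo N_1$ is a normed algebra with $\Vert N_1N_2\Vert_{\mathoo N_1}\le\Vert N_1\Vert_{\mathoo N_1}\Vert N_2\Vert_{\mathoo N_1}$; completeness follows by applying Proposition~\ref{p:abs ser} to a rapidly converging subsequence of a Cauchy sequence (its increments are dominated in $L_1$ by their $\beta$'s). Adjoining a unit gives a unital Banach algebra $\mathoo N_1^{+}$; by Proposition~\ref{p:5.4.3} and~\eqref{e:norm of N} the natural map $\mathoo N_1^{+}\to\mathoo B(L_p)$ is a bounded unital homomorphism, and it is injective because an $n$ inducing the zero operator must vanish almost everywhere (test on $u=\mathbf1_{B(y_0,\varepsilon)}e$, use Proposition~\ref{p:Fubini}, and let $\varepsilon\to0$ at a point of density of a putative support of $n$, Proposition~\ref{p:Stein:1:1}). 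It therefore suffices to prove that $\mathbb C\mathbf1+\mathoo N_1\subseteq\mathoo B(L_p)$ is inverse-closed.

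\emph{Small norm and a tame majorant.} If $\Vert N\Vert_{\mathoo N_1}<1$ then $\Vert N\Vert_{\mathoo B(L_p)}<1$ by~\eqref{e:norm of N}, the Neumann series $\sum_{k\ge0}(-1)^kN^k$ converges in $\mathoo B(L_p)$ to $(\mathbf1+N)^{-1}$, the kernel $n_k$ of $N^k$ satisfies $\Vert n_k(x,y)\Vert\le\beta^{*k}(y)$, and $\sum_{k\ge1}\Vert\beta^{*k}\Vert_{L_1}\le\sum_{k\ge1}\Vert\beta\Vert_{L_1}^{k}<\infty$; Proposition~\ref{p:abs ser}, applied fibrewise in $x$ (licit by Proposition~\ref{p:Fubini}), yields $m=\sum_{k\ge1}(-1)^kn_k\in\mathoo N_1$, with majorant $\sum_{k\ge1}\beta^{*k}\in L_1$, inducing $(\mathbf1+N)^{-1}-\mathbf1$. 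For general $\beta$, split $\beta=\beta_0+\beta_\infty$ with $\beta_0$ bounded and compactly supported and $\Vert\beta_\infty\Vert_{L_1}$ arbitrarily small, and $N=N_0+N_\infty$ accordingly; then $(\mathbf1+N_\infty)^{-1}=\mathbf1+M_\infty$ with $M_\infty\in\mathoo N_1$, and $\mathbf1+N=(\mathbf1+N_\infty)\bigl(\mathbf1+(\mathbf1+M_\infty)N_0\bigr)$, where $(\mathbf1+M_\infty)N_0\in\mathoo N_1$ has a majorant $\beta_0+\gamma*\beta_0$ with $\gamma=\beta_{M_\infty}\in L_1$. Since the Wiener amalgam $W(L_\infty,L_1)$ contains bounded compactly supported functions and is a convolution module over $L_1$, this majorant lies in $W(L_\infty,L_1)$; and since $(\mathbf1+N)^{-1}=\bigl(\mathbf1+(\mathbf1+M_\infty)N_0\bigr)^{-1}(\mathbf1+M_\infty)$ and $\mathbf1+\mathoo N_1$ is closed under products, it suffices to invert an operator $\mathbf1+N_1$ with $N_1\in\mathoo N_1$ whose majorant belongs to $W(L_\infty,L_1)$.

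\emph{The Wiener step and the descent.} For such $N_1$, tile $\mathbb R^c$ by the unit cubes $Q_j=j+[0,1)^c$; the isometry $L_p(\mathbb R^c,\mathbb E)\cong\ell_p\bigl(\mathbb Z^c;L_p(Q_0,\mathbb E)\bigr)$ turns $N_1$ into an operator matrix $(A_{jk})$ with $\Vert A_{jk}\Vert\le a(j-k)$, where $a(i)=\esssup_{w\in i+(-1,1)^c}\beta_{n_1}(w)$ and $a\in\ell_1(\mathbb Z^c)$ precisely because the majorant of $N_1$ lies in $W(L_\infty,L_1)$ (for a mere $L_1$ majorant this $a$ need not even be finite, which is why the reduction above is needed). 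Thus $\mathbf1+N_1$ belongs to the Banach algebra of $\ell_1$-convolution-dominated operator matrices, which is inverse-closed in $\mathoo B(\ell_p(\mathbb Z^c;L_p(Q_0)))$ for every $p$ simultaneously (the operator-valued Wiener lemma, a Bochner--Phillips-type theorem), with the block matrix of the inverse independent of $p$; hence $(\mathbf1+N_1)^{-1}=\mathbf1+M$ with $\Vert M_{jk}\Vert_{L_\infty(Q_k)\to L_\infty(Q_j)}\le b(j-k)$, $b\in\ell_1$. It remains to recover the kernel estimate. From $M=-N_1-MN_1$: pulling each bounded block $M_{jk}$ through the integral defining $N_1$ exhibits $MN_1$ as an operator of the form~\eqref{e:operator N} whose kernel $q$ obeys, for $x\in Q_j$, $\Vert q(x,x-w)\Vert\le\sum_k b(j-k)\,a\bigl(k-\lfloor x-w\rfloor\bigr)=(b*a)\bigl(j-\lfloor x-w\rfloor\bigr)$, and because $w\in Q_j-Q_{\lfloor x-w\rfloor}$ confines $j-\lfloor x-w\rfloor$ to a bounded neighbourhood of $w$, the supremum of these bounds over admissible $j$ is a majorant of $q$ that depends on $w$ alone and again lies in $W(L_\infty,L_1)\subseteq L_1$. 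Hence $MN_1\in\mathoo N_1$, so $M=-N_1-MN_1\in\mathoo N_1$, which proves the theorem; and since $m$ never mentions $p$, the same $\mathbf1+M$ inverts $\mathbf1+N$ in every $L_p$ in which $\mathbf1+N$ is invertible.

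The two points that carry real weight are the operator-valued Wiener lemma for convolution-dominated block matrices and, above all, the last passage --- turning a block estimate back into a pointwise, $L_1$-majorized kernel: pulling operators through Bochner integrals, interchanging the sum over cubes with integration, and checking that the majorants produced really are measurable functions of the second variable alone (cf.\ Propositions~\ref{p:Fubini} and~\ref{p:N acts in L_p}). This is exactly the ``correct usage of the Lebesgue integral'' mentioned in the introduction, and it is where I expect the proof to be most delicate; everything else is the soft functional-analytic skeleton above.
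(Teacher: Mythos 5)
The paper does not actually prove Theorem~\ref{t:5.4.7}; it is imported verbatim from \cite[Theorem~5.4.7]{Kurbatov99} (see also \cite{Kurbatov01}), so your argument has to stand on its own. Its skeleton --- make $\mathoo N_1$ a Banach algebra, settle small norm by a Neumann series, split off a small-$L_1$ tail so that the remaining majorant lies in the amalgam $W(L_\infty,\ell_1)$, discretize over unit cubes to a convolution-dominated block matrix, invoke an operator-valued Bochner--Phillips/Baskakov-type Wiener lemma, and descend to a kernel bound via $M=-N_1-MN_1$ --- is a legitimate and essentially standard route (close in spirit to the Farrell--Strohmer and Baskakov arguments cited in the introduction), and the algebra, Neumann-series and amalgam steps are sound, granted that the invoked discrete lemma is itself a result of the same depth as the theorem.

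There is, however, a genuine gap for $1\le p<\infty$. Your descent needs the blocks of the discrete inverse to be bounded $L_\infty(Q_k)\to L_\infty(Q_j)$ with an $\ell_1$ envelope: only sup-norm control of $M_{jk}g$ converts the blockwise estimates into the pointwise a.e.\ bound on the kernel of $MN_1$ that membership in $\mathoo N_1$ (estimate~\eqref{e:est via beta}) requires. But under the stated hypothesis the discrete Wiener lemma can only be applied in $\mathoo B\bigl(\ell_p(\mathbb Z^c;L_p(Q_0,\mathbb E))\bigr)$ for the given $p$, and it then yields only $\Vert M_{jk}\Vert_{L_p(Q_k)\to L_p(Q_j)}\le b(j-k)$; from such bounds your computation gives merely an $L_p$-average-in-$x$ bound on the kernel slices, which does not show $MN_1\in\mathoo N_1$ (nor even that it is an integral operator with an $L_1$ majorant). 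The phrase ``inverse-closed for every $p$ simultaneously, with the block matrix of the inverse independent of $p$'' conflates the global exponent, where simultaneity is indeed standard because convolution-dominated blocks over a \emph{fixed} local space $X$ act on every $\ell_q(\mathbb Z^c;X)$, with the local space itself, which here is $L_p(Q_0)$ and changes with $p$; no off-the-shelf CD-matrix lemma transfers invertibility from $\ell_p(\mathbb Z^c;L_p(Q_0))$ to $\ell_\infty(\mathbb Z^c;L_\infty(Q_0))$, and that transfer is essentially Corollary~\ref{c:5.4.8}, which the paper deduces \emph{from} the theorem you are proving. As written, your proof is complete (modulo the invoked discrete lemma and the measurability technicalities you honestly flag) only for $p=\infty$, and, via the transposed identity $M=-N_1-N_1M$ and adjoint blocks, for $p=1$; for intermediate $p$ you need either an argument carried out at fixed $p$, as in \cite{Kurbatov99}, or an independent proof that the inverse's blocks are $L_\infty$-bounded.
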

A version of Theorem~\ref{t:5.4.7} for the case of infinite-dimensional $\mathbb E$ can
be found in~\cite{Kurbatov01}.

 \begin{corollary}[{\rm\cite[Corollary~5.4.8]{Kurbatov99}}\bf]\label{c:5.4.8}
Let $n\in\mathoo N_1$, and the operator $N$ be defied by~\eqref{e:operator N}. If the
operator $\mathbf1+N$ is invertible in $L_p$ for some $1\le p\le\infty$, then it is
invertible in $L_p$ for all $1\le p\le\infty$. Moreover, the kernel $m$ of the operator
$M$, where $(\mathbf1+N)^{-1}=\mathbf1+M$, does not depend on $p$.
 \end{corollary}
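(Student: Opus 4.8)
The plan is to read the inverse kernel off from a single exponent and then verify by hand that it serves for all of them. So fix a $p_0$ for which $\mathbf1+N$ is invertible in $L_{p_0}$. By Theorem~\ref{t:5.4.7}, $(\mathbf1+N)^{-1}=\mathbf1+M$ with $M\in\mathoo N_1(L_{p_0})$; pick a kernel $m\in\mathoo N_1(\mathbb R^c,\mathbb E)$ inducing $M$, say $\Vert m(x,y)\Vert\le\gamma(y)$ with $\gamma\in\mathscr L_1$. By Proposition~\ref{p:5.4.3} the same function $m$ induces an operator, again written $M$, in every $L_p$, $1\le p\le\infty$, and likewise $N$ acts in every $L_p$. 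It suffices to establish the two operator identities $N+M+NM=0$ and $N+M+MN=0$ in $L_p$ for \emph{every} $p$: then $(\mathbf1+N)(\mathbf1+M)=(\mathbf1+M)(\mathbf1+N)=\mathbf1$ in $L_p$, so $\mathbf1+N$ is invertible in each $L_p$ with inverse $\mathbf1+M$, and the inverting kernel $m$ has been fixed once and for all, independently of $p$.

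The first point to record is that $NM$ and $MN$ belong again to $\mathoo N_1$ and have a \emph{$p$-independent} kernel. For $u\in\mathscr L_p$ one has $(\beta*\gamma)*|u|\in\mathscr L_p$ by Young's inequality, hence it is finite almost everywhere and Fubini's theorem (Proposition~\ref{p:Fubini}) applies; it shows that $NM$ is the integral operator with kernel $p_1(x,y)=\int_{\mathbb R^c}n(x,x-z)\,m(z,z-y)\,dz$, which satisfies $\Vert p_1(x,y)\Vert\le(\beta*\gamma)(x-y)$ with $\beta*\gamma\in\mathscr L_1$ and which depends only on $n$ and $m$. The same holds for $MN$. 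Consequently $N+M+NM$ and $N+M+MN$ are operators of class $\mathoo N_1(L_p)$ whose kernels $k,k'$ are one and the same function of $(x,y)$ for every $p$, with $\Vert k(x,y)\Vert,\Vert k'(x,y)\Vert\le\tilde\beta(x-y)$, where $\tilde\beta:=\beta+\gamma+\beta*\gamma\in\mathscr L_1$.

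The two identities certainly hold in $L_{p_0}$, by the choice of $m$: expanding $(\mathbf1+N)(\mathbf1+M)=\mathbf1=(\mathbf1+M)(\mathbf1+N)$ there gives $N+M+NM=N+M+MN=0$. In particular, for every bounded $u$ with compact support (which lies in $\mathscr L_{p_0}$) the functions $(N+M+NM)u$ and $(N+M+MN)u$, computed from the pointwise integral formulas, vanish almost everywhere, and these functions do not depend on which $L_p$ we regard $u$ as living in. For $1\le p<\infty$ such $u$ are dense in $L_p$ and $N,M$ are bounded in $L_p$ (Proposition~\ref{p:5.4.3}), so the identities pass to all of $L_p$. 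For $p=\infty$, where this density is unavailable, I would argue by truncation: for $u\in\mathscr L_\infty$ let $u_R$ be the restriction of $u$ to the ball $B(0,R)$; then $(N+M+NM)u_R=0$ almost everywhere for every $R$, so by linearity, for almost every $x$ and every $R$,
\begin{equation*}
\bigl|\bigl((N+M+NM)u\bigr)(x)\bigr|=\bigl|\bigl((N+M+NM)(u-u_R)\bigr)(x)\bigr|\le\Vert u\Vert_{L_\infty}\int_{|y|\ge R}\tilde\beta(x-y)\,dy,
\end{equation*}
and the right-hand side tends to $0$ as $R\to\infty$ because $\tilde\beta\in\mathscr L_1$; since the left-hand side does not depend on $R$, it vanishes. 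Hence $N+M+NM=0$ in $L_\infty$, and likewise $N+M+MN=0$, which finishes the argument.

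I expect the only delicate point to be the exponent $p=\infty$, disposed of above by the truncation estimate in place of a density argument; the rest — the Fubini interchanges and the constant bookkeeping distinguishing a function from its class modulo null sets — is routine but, as the introduction stresses, needs care with the Lebesgue integral. A variant that avoids the $p=\infty$ truncation would be to prove, via the Lebesgue density theorem (Proposition~\ref{p:Stein:1:1}), that an $\mathoo N_1$-kernel inducing the zero operator must vanish almost everywhere; applied to $k$ and $k'$ this collapses both identities to a single almost-everywhere identity of kernels valid simultaneously for all $p$.
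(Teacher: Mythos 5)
Your argument is correct, and it is essentially the intended one: the paper itself does not prove this corollary (it is imported from \cite[Corollary~5.4.8]{Kurbatov99}), but the introduction describes exactly this deduction --- read the kernel $m$ off from Theorem~\ref{t:5.4.7} in one exponent and verify, via the $p$-independent pointwise formulas and the majorant $\beta+\gamma+\beta*\gamma\in\mathscr L_1$, that the same kernel inverts $\mathbf1+N$ in every $L_p$. Your truncation step for $p=\infty$ (or, equivalently, the closing remark that an $\mathoo N_1$-kernel inducing the zero operator vanishes almost everywhere) correctly handles the one exponent where density of compactly supported functions fails.
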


For any $n\in\mathoo N_1(\mathbb R^c,\mathbb E)$, we denote by $\bar n$ the function that
assigns to each $x\in\mathbb R^c$ the function $\bar n(x):\,\mathbb R^c\to\mathoo
B(\mathbb E)$ defined by the rule
\begin{equation}\label{e:bar n}
\bar n(x)(y)=n(x,x-y).
\end{equation}

 \begin{proposition}\label{p:bar n}
Let $n\in\mathoo N_1(\mathbb R^c,\mathbb E)$. Then formula~\eqref{e:bar n} defines a
measurable function $\bar n$ with values in $\mathscr L_1\bigl(\mathbb R^c,\mathoo
B(\mathbb E)\bigr)$ for almost all $x\in\mathbb R^c$. The function $\bar n:\,\mathbb
R^c\to L_1\bigl(\mathbb R^c,\mathoo B(\mathbb E)\bigr)$ is essentially bounded, i.e.
$\bar n\in\mathscr L_\infty\bigl(\mathbb R^c,L_1\bigl(\mathbb R^c,\mathoo B(\mathbb
E)\bigr)\bigr)$.
 \end{proposition}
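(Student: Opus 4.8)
The plan is to work throughout with the function $n_1(x,y)=n(x,x-y)$, so that $\bar n(x)=n_1(x,\cdot)$ by the very definition~\eqref{e:bar n}; by Proposition~\ref{p:N acts in L_p} the function $n_1$ is measurable on $\mathbb R^c\times\mathbb R^c$. The first step is to transfer the pointwise estimate~\eqref{e:est via beta}. The set $E=\{\,(x,y):\Vert n(x,y)\Vert>\beta(y)\,\}$ has measure zero, and the linear change of variables $(x,y)\mapsto(x,x-y)$ of $\mathbb R^c\times\mathbb R^c$ has Jacobian determinant $\pm1$ and hence preserves Lebesgue measure; therefore the set $\{\,(x,y):\Vert n_1(x,y)\Vert>\beta(x-y)\,\}$ also has measure zero, and by Corollary~\ref{c:Fubini}, for almost every $x$ we have $\Vert n_1(x,y)\Vert\le\beta(x-y)$ for almost every $y$.

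Next I would deduce that the slices $\bar n(x)$ lie in $\mathscr L_1$ and are uniformly bounded. Fix $k\in\mathbb N$. Since
\begin{equation*}
\int_{\{\,|x|\le k\,\}}\Bigl(\int_{\mathbb R^c}\beta(x-y)\,dy\Bigr)dx=\mu\bigl(\{\,|x|\le k\,\}\bigr)\,\Vert\beta\Vert_{L_1}<\infty,
\end{equation*}
the estimate from the first step together with Fubini's theorem (Proposition~\ref{p:Fubini}) shows that the restriction of $n_1$ to $\{\,|x|\le k\,\}\times\mathbb R^c$ belongs to $\mathscr L_1\bigl(\{\,|x|\le k\,\}\times\mathbb R^c,\mathoo B(\mathbb E)\bigr)$; consequently, for almost every $x$ with $|x|\le k$, the function $\bar n(x)=n_1(x,\cdot)$ belongs to $\mathscr L_1\bigl(\mathbb R^c,\mathoo B(\mathbb E)\bigr)$. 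Letting $k\to\infty$ gives this for almost every $x\in\mathbb R^c$, and for such $x$ the translation invariance of the integral yields $\Vert\bar n(x)\Vert_{L_1}\le\int_{\mathbb R^c}\beta(x-y)\,dy=\Vert\beta\Vert_{L_1}$, so $\bar n$ is essentially bounded with $L_\infty$-norm at most $\Vert\beta\Vert_{L_1}$.

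The remaining, and main, point is that $\bar n$ is measurable as a map into the Banach space $L_1\bigl(\mathbb R^c,\mathoo B(\mathbb E)\bigr)$. Since this space is separable, it is enough to show that $x\mapsto\Vert\bar n(x)-s\Vert_{L_1}$ is measurable for every $s$ in a fixed countable dense subset $D$: then $\bar n^{-1}\bigl(\{\,g:\Vert g-s\Vert_{L_1}<r\,\}\bigr)$ is measurable for all $s\in D$ and rational $r>0$, every open set in $L_1$ is a countable union of such balls, and a separably-valued Borel function is measurable in the present sense. Now for fixed $s\in D$ and $k\in\mathbb N$ the function $(x,y)\mapsto\Vert n_1(x,y)-s(y)\Vert$ is measurable on $\{\,|x|\le k\,\}\times\mathbb R^c$ and is dominated there by $\beta(x-y)+\Vert s(y)\Vert$, whose integral over $\{\,|x|\le k\,\}\times\mathbb R^c$ is finite; so Fubini's theorem shows that $x\mapsto\int_{\mathbb R^c}\Vert n_1(x,y)-s(y)\Vert\,dy=\Vert\bar n(x)-s\Vert_{L_1}$ is measurable on $\{\,|x|\le k\,\}$, and hence, letting $k\to\infty$, on $\mathbb R^c$.

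I expect this passage from the measurability of the two-variable kernel to the measurability of the associated $L_1$-valued function of one variable to be the principal obstacle. An alternative to the separability argument would be a direct verification through Lusin's theorem (Proposition~\ref{p:mes func}): apply it to $n_1$ on $K\times\{\,|y|\le R\,\}$, with $R$ chosen so large that $\int_{|y|>R}\beta(x-y)\,dy$ is uniformly small for $x\in K$, pass by a Fubini slicing argument to a compact set of "good" points $x$ whose slices exhaust most of $\{\,|y|\le R\,\}$, and then use the uniform continuity of $n_1$ on the Lusin compact set together with the uniform integrability of the translates $\{\beta(x-\cdot):x\in K\}$ to obtain continuity of $\bar n$ on that set; but the separability route is shorter.
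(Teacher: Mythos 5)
Your proof is correct, and its core coincides with the paper's: both arguments truncate the kernel to a bounded set of $x$'s (the paper sets $n(x,y)=0$ for $x\notin[-\alpha,\alpha]^c$, you restrict to $|x|\le k$) so that the kernel becomes summable on the product space, and then apply Fubini to conclude that almost every slice $\bar n(x)$ lies in $\mathscr L_1$ with $\Vert\bar n(x)\Vert_{L_1}\le\Vert\beta\Vert_{L_1}$. Where you genuinely diverge is on the measurability of $\bar n$ as a function with values in $L_1\bigl(\mathbb R^c,\mathoo B(\mathbb E)\bigr)$. The paper reads this off from Fubini's theorem: in Bourbaki's formulation of the Lebesgue--Fubini theorem the map $x\mapsto n(x,\cdot)$ is itself measurable (indeed integrable) as an $L_1$-valued map, although the version quoted as Proposition~\ref{p:Fubini} only records the measurability of the scalarized function $x\mapsto\int n(x,y)\,dy$. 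You instead give a self-contained argument exploiting the separability of $L_1\bigl(\mathbb R^c,\mathoo B(\mathbb E)\bigr)$: scalar Fubini yields measurability of $x\mapsto\Vert\bar n(x)-s\Vert_{L_1}$ for each $s$ in a countable dense set, hence Borel measurability, hence strong measurability by the standard Pettis-type reasoning. This buys a proof that does not lean on a vector-valued strengthening of Fubini beyond what Proposition~\ref{p:Fubini} literally states, at the cost of some extra length; your explicit transfer of estimate~\eqref{e:est via beta} through the unimodular change of variables $(x,y)\mapsto(x,x-y)$ is likewise correct and is left implicit in the paper.
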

 \begin{proof}
We take an arbitrary $\alpha\in\mathbb N$. We redefine $n$ by the formula $n(x,y)=0$ for
$x\notin[-\alpha,\alpha]^c$. By estimate~\eqref{e:est via beta}, the redefinition of the
function $n$ is summable. Hence, by Proposition~\ref{p:Fubini}, for almost all
$x\in[-\alpha,\alpha]^c$, the values $\bar n(x)(y)=n(x,x-y)$ are defined for almost all
$y$, $\bar n(x)$ belongs to $\mathscr L_1$, and $\Vert\bar n(x)\Vert=\int_{\mathbb
R^c}\Vert n(x,y)\Vert\,dy\le\Vert\beta\Vert_{L_1}$.
 \end{proof}

\begin{proposition}\label{p:norm in L_infty}
For any $n\in\mathoo N_1(\mathbb R^c,\mathbb E)$, the norm of the operator
$N:\,L_\infty\to L_\infty$ defined by formula~\eqref{e:operator N} satisfies the estimate
\begin{equation*}
\esssup_x\Vert\bar n(x)\Vert_{L_1}\le C\Vert N:\,L_\infty\to L_\infty\Vert,
\end{equation*}
where $C$ depends only on the norm on $\mathbb E$.
 \end{proposition}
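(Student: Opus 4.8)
The plan is to bound $\|N:\,L_\infty\to L_\infty\|$ from below. Fix a number $\lambda<\esssup_x\|\bar n(x)\|_{L_1}$; it suffices to produce $u\in\mathscr L_\infty$ with $\|u\|_{L_\infty}\le1$ and $\|Nu\|_{L_\infty}\ge\lambda/C$, where $C$ depends only on the norm of $\mathbb E$. The difficulty is that the kernel is operator-valued: one cannot simply take $u(y)$ to be the ``sign'' of $n(x,x-y)$, since that choice depends on $x$. As $\mathbb E$ is finite-dimensional, I first fix two finite nets. Choose $\phi_1,\dots,\phi_l$ in the closed unit ball of $\mathbb E^*$ so that $\max_i\|\phi_i\circ A\|_{\mathbb E^*}\ge\tfrac12\|A\|$ for every $A\in\mathoo B(\mathbb E)$; a $\tfrac12$-net of the ball works, because $\|A\|=\max_{\|\phi\|\le1}\|\phi\circ A\|_{\mathbb E^*}$. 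Choose also $w_1,\dots,w_k$ in the closed unit ball of $\mathbb E$ so that $\max_j\operatorname{Re}\psi(w_j)\ge\tfrac12\|\psi\|_{\mathbb E^*}$ for every $\psi\in\mathbb E^*$; again a $\tfrac12$-net works, since $\|\psi\|_{\mathbb E^*}=\max_{|w|\le1}\operatorname{Re}\psi(w)$. The numbers $k,l$ depend only on $\mathbb E$.

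Next I would isolate one coordinate. By Propositions~\ref{p:Fubini}, \ref{p:N acts in L_p} and \ref{p:bar n}, the functions $x\mapsto\|\bar n(x)\|_{L_1}=\int_{\mathbb R^c}\|n(x,y)\|\,dy$ and $x\mapsto\int_{\mathbb R^c}\|\phi_i\circ n(x,x-y)\|_{\mathbb E^*}\,dy$ are measurable, so $\Omega=\{x:\|\bar n(x)\|_{L_1}>\lambda\}$ has positive measure. For every $x\in\Omega$,
\begin{equation*}
\sum_{i=1}^l\int_{\mathbb R^c}\|\phi_i\circ n(x,x-y)\|_{\mathbb E^*}\,dy
\ge\int_{\mathbb R^c}\max_i\|\phi_i\circ n(x,x-y)\|_{\mathbb E^*}\,dy
\ge\tfrac12\int_{\mathbb R^c}\|n(x,x-y)\|\,dy=\tfrac12\|\bar n(x)\|_{L_1}>\tfrac\lambda2,
\end{equation*}
so $\Omega$ is covered by the measurable sets $\Omega_i=\{x\in\Omega:\int_{\mathbb R^c}\|\phi_i\circ n(x,x-y)\|_{\mathbb E^*}\,dy>\lambda/(2l)\}$. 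Pick an index $i^*$ with $\mu(\Omega_{i^*})>0$ and put $\phi=\phi_{i^*}$.

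Now I localize. By Proposition~\ref{p:bar n} the map $\bar n:\,\mathbb R^c\to L_1\bigl(\mathbb R^c,\mathoo B(\mathbb E)\bigr)$ is measurable, so by inner regularity of $\mu$ and Lusin's theorem (Proposition~\ref{p:mes func}) there is a compact set $K_1\subseteq\Omega_{i^*}$ with $\mu(K_1)>0$ on which $\bar n$ is continuous. By Lebesgue's density theorem (Proposition~\ref{p:Stein:1:1}) one can choose $x_0\in K_1$ that is a density point of $K_1$ and for which $\bar n(x_0)\in\mathscr L_1$. Set $u(y)=w_{j(y)}$, where $j(y)$ is the least index attaining $\max_j\operatorname{Re}\phi\bigl(\bar n(x_0)(y)\,w_j\bigr)$; then $u$ is measurable, $\|u\|_{L_\infty}\le1$, and by the choice of the net $(w_j)$ one has $\operatorname{Re}\phi\bigl(\bar n(x_0)(y)\,u(y)\bigr)\ge\tfrac12\|\phi\circ\bar n(x_0)(y)\|_{\mathbb E^*}$ for almost all $y$. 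Since $\bar n(x_0)\in\mathscr L_1$ the integral defining $(Nu)(x_0)$ converges absolutely and $\operatorname{Re}\phi$ may be taken under it, so
\begin{equation*}
|(Nu)(x_0)|\ge\operatorname{Re}\phi\bigl((Nu)(x_0)\bigr)
=\int_{\mathbb R^c}\operatorname{Re}\phi\bigl(\bar n(x_0)(y)\,u(y)\bigr)\,dy
\ge\tfrac12\int_{\mathbb R^c}\|\phi\circ n(x_0,x_0-y)\|_{\mathbb E^*}\,dy>\frac{\lambda}{4l}.
\end{equation*}

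Finally, I spread this bound to a set of positive measure. For almost every $x\in K_1$ one has $(Nu)(x)=\int_{\mathbb R^c}\bar n(x)(y)\,u(y)\,dy$ (Propositions~\ref{p:5.4.3}, \ref{c:Fubini}, \ref{p:bar n}), hence
\begin{equation*}
|(Nu)(x)-(Nu)(x_0)|\le\|u\|_{L_\infty}\,\|\bar n(x)-\bar n(x_0)\|_{L_1},
\end{equation*}
and the right-hand side is $<\lambda/(8l)$ as soon as $x\in K_1$ is sufficiently close to $x_0$, by continuity of $\bar n$ on $K_1$. Since $x_0$ is a density point of $K_1$, the set of such $x$ has positive measure, so $\|Nu\|_{L_\infty}>\lambda/(8l)$ and therefore $\|N:\,L_\infty\to L_\infty\|>\lambda/(8l)$. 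Letting $\lambda\uparrow\esssup_x\|\bar n(x)\|_{L_1}$ gives the assertion with $C=8l$, which depends only on $\mathbb E$. The main obstacle is exactly this last passage from the pointwise inequality at the single point $x_0$ to an estimate of $\|Nu\|_{L_\infty}$; it is what forces the reduction to a single scalar functional $\phi$ together with the Lusin/density-point argument. Everything else is routine measurability bookkeeping and careful use of the Lebesgue integral.
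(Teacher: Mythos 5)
Your proof is correct and follows essentially the same route as the paper: find a set of positive measure where $\Vert\bar n(x)\Vert_{L_1}$ is nearly extremal, apply Lusin's theorem to make $\bar n$ continuous on a compact subset of positive measure, pick a density point $x_0$, produce a test function $u$ realizing the pointwise lower bound at $x_0$ up to a constant depending only on $\dim\mathbb E$, and use the continuity of $\bar n$ on the Lusin compact together with the density-point property to propagate that bound to a set of positive measure. The only (immaterial) differences are in the finite-dimensional reduction and the choice of $u$: the paper passes to the matrix norm $\max_{ij}|a_{ij}|$ and invokes the duality $L_\infty=(L_1)^*$, whereas you use finite $\tfrac12$-nets in the unit balls of $\mathbb E^*$ and $\mathbb E$ and construct $u$ as an explicit finite-valued measurable selection, which yields the same conclusion with an explicit constant.
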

 \begin{proof}
We set
\begin{equation*}
M=\esssup_x\Vert n(x,\cdot)\Vert_{L_1}.
\end{equation*}

We take an arbitrary $\varepsilon>0$. By assumption, there exists a measurable set
$K\subset\mathbb R^c$ such that $\mu(K)\neq0$ and
\begin{equation*}
\Vert n(x,\cdot)\Vert_{L_1}>M-\varepsilon,\qquad x\in K.
\end{equation*}
Without loss of generality, we may assume that $K\subseteq[-\alpha,\alpha]^c$ for some
$\alpha\in\mathbb N$.

By Proposition~\ref{p:bar n}, the function $\bar n$ and its restriction to
$[-\alpha,\alpha]^c$ are measurable. Consequently, by Proposition~\ref{p:mes func}, for
any $\varepsilon_1>0$ there exists a compact set $K_1\subseteq[-\alpha,\alpha]^c$ such
that the restriction of $\bar n$ to $K_1$ is continuous and
$\mu([-\alpha,\alpha]^c\setminus K_1)<\varepsilon_1$. If $\varepsilon_1$ is small enough,
$\mu(K\cap K_1)\neq0$.

Suppose that $\dim\mathbb E=1$. Let $x_0\in K\cap K_1$ be a point of density of the
set~$K\cap K_1$. Since $L_\infty$ is the conjugate space of $L_1$, there exists $u\in
L_\infty$, $\Vert u\Vert\le1$, such that
\begin{equation*}
\int_{\mathbb R^c}n(x_0,x_0-y)u(y)\,dy\ge M-2\varepsilon.
\end{equation*}
By continuity, for $x\in K\cap K_1$ close enough to $x_0$ we have
\begin{equation*}
\Biggl|\int_{\mathbb R^c}n(x,x-y)u(y)\,dy\Biggr|\ge M-3\varepsilon.
\end{equation*}
Since $\varepsilon>0$ is arbitrary, it follows that $\Vert N:\,L_\infty\to
L_\infty\Vert\ge M$.

Now we suppose that $\dim\mathbb E$ is arbitrary (but finite). We identify elements of
$\mathoo B(\mathbb E)$ with matrices $\{a_{ij}\}$ and consider in $\mathoo B(\mathbb E)$
another norm $\Vert\{a_{ij}\}\Vert_\bullet=\max_{ij}|a_{ij}|$. It is equivalent to the
initial norm on $\mathoo B(\mathbb E)$, because all norms on a finite-dimensional space
are equivalent~\cite[ch.~1, \S~2, 3, Theorem~2]{Bourbaki TVS}. Repeating the reasoning
from the above paragraph, we obtain
\begin{equation*}
\Vert N:\,L_\infty\to L_\infty\Vert\ge\esssup_x\int_{\mathbb R^c}\Vert
n(x,y)\Vert_\bullet\;dy,
\end{equation*}
which completes the proof.
 \end{proof}

For all $r>0$, we consider the function
\begin{equation*}
\bar n_r(x)=\frac1{\mu\bigl(B(0,r)\bigr)}\int_{B(0,r)}\bar n(x-y)\,dy.
\end{equation*}
By Proposition~\ref{p:bar n}, the function $\bar n$ is essentially bounded. Therefore,
the functions $\bar n_r$ are defined everywhere and continuous.

 \begin{proposition}\label{p:bar n_r}
Let $\bar n\in\mathscr L_\infty\bigl(\mathbb R^c,L_1\bigl(\mathbb R^c,\mathoo B(\mathbb
E)\bigr)\bigr)$. Then there exists a sequence $r_i\to0$ such that the functions $\bar
n_{r_i}$ converges almost everywhere to~$\bar n$.
 \end{proposition}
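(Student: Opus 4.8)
The statement is a vector-valued instance of the Lebesgue differentiation theorem, with $\bar n$ regarded (by Proposition~\ref{p:bar n}) as an essentially bounded function on $\mathbb R^c$ with values in $L_1\bigl(\mathbb R^c,\mathoo B(\mathbb E)\bigr)$. My plan is to prove the stronger assertion that $\bar n_r(x)\to\bar n(x)$ in the $L_1$-norm as $r\to+0$ for almost every $x$, so that any sequence $r_i\to0$, for instance $r_i=1/i$, will meet the conclusion. Put $C_\infty=\esssup_x\Vert\bar n(x)\Vert_{L_1}$, which is finite by Proposition~\ref{p:bar n}, and discard the null set outside of which $\bar n(x)$ is defined and $\Vert\bar n(x)\Vert_{L_1}\le C_\infty$. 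After the substitution $z=x-y$ (which maps $B(0,r)$ onto $B(x,r)$ and preserves measure), and because the norm of an integral does not exceed the integral of the norm, one has for each such $x$ and each $r>0$
\begin{equation*}
\Vert\bar n_r(x)-\bar n(x)\Vert_{L_1}\le\frac1{\mu\bigl(B(x,r)\bigr)}\int_{B(x,r)}\Vert\bar n(z)-\bar n(x)\Vert_{L_1}\,dz ,
\end{equation*}
so everything reduces to showing that this average of $z\mapsto\Vert\bar n(z)-\bar n(x)\Vert_{L_1}$ over $B(x,r)$ tends to $0$ for almost every $x$.

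To do this I would use the two results recalled in Section~\ref{s:Lebesgue integral}. Fix a cube $[-\alpha,\alpha]^c$; by Proposition~\ref{p:bar n} the restriction of $\bar n$ to it is measurable, so Lusin's theorem (Proposition~\ref{p:mes func}) yields, for every $j\in\mathbb N$, a compact set $K_{\alpha,j}\subseteq[-\alpha,\alpha]^c$ with $\mu\bigl([-\alpha,\alpha]^c\setminus K_{\alpha,j}\bigr)<1/j$ on which $\bar n$ is continuous. Take a point $x\in K_{\alpha,j}$ that is also a point of density of $K_{\alpha,j}$ (and lies outside the null set discarded above), and split $B(x,r)$ into $B(x,r)\cap K_{\alpha,j}$ and $B(x,r)\setminus K_{\alpha,j}$. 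On the first part $\Vert\bar n(z)-\bar n(x)\Vert_{L_1}$ is as small as we wish once $r$ is small enough, by continuity of $\bar n|_{K_{\alpha,j}}$ at $x$; on the second part one has only the bound $\Vert\bar n(z)-\bar n(x)\Vert_{L_1}\le2C_\infty$, valid for almost every $z$, but this suffices because $\mu\bigl(B(x,r)\setminus K_{\alpha,j}\bigr)\big/\mu\bigl(B(x,r)\bigr)\to0$ as $r\to+0$, $x$ being a point of density. Hence the average tends to $0$ at every such $x$. By Lebesgue's density theorem (Proposition~\ref{p:Stein:1:1}) almost every point of $K_{\alpha,j}$ is a point of density of $K_{\alpha,j}$; since the sets $K_{\alpha,j}$ exhaust $[-\alpha,\alpha]^c$ up to a set of measure zero as $j\to\infty$, and the cubes $[-\alpha,\alpha]^c$ exhaust $\mathbb R^c$, the union over $\alpha$ and $j$ of these admissible density points, with the earlier null set removed, is a set of full measure on which $\bar n_r(x)\to\bar n(x)$ in the $L_1$-norm.

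The one genuinely delicate point is the contribution of $B(x,r)\setminus K_{\alpha,j}$: there is no continuity available there, only the essential bound $C_\infty$, so the estimate must be arranged so that this contribution is weighted by the relative measure of $B(x,r)\setminus K_{\alpha,j}$ — which is exactly why $x$ must be taken to be a point of density of $K_{\alpha,j}$ and why the Lebesgue density theorem enters. A variant closer to the literal wording of the proposition would be to note that, for $r<1$, the function $\bar n_r$ restricted to $[-\alpha,\alpha]^c$ is the convolution of the restriction of $\bar n$ to $[-\alpha-1,\alpha+1]^c$ with the normalized indicator of $B(0,r)$; this convolution converges to $\bar n$ in $L_1$ of the cube by a standard approximate-identity argument, and then Proposition~\ref{p:sequence in L_1} together with a diagonal extraction over the cubes produces a single sequence $r_i\to0$ with $\bar n_{r_i}\to\bar n$ almost everywhere. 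I would nonetheless prefer the density-point argument, as it rests only on facts already recalled in the paper.
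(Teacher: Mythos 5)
Your argument is correct, but it is not the paper's. The paper's proof is, in essence, the ``variant'' you sketch at the end: after reducing to compactly supported $\bar n$ (hence $\bar n\in\mathscr L_1$ of the big space), it notes that $T_r$ has norm at most $1$ on $L_1\bigl(\mathbb R^c,L_1\bigl(\mathbb R^c,\mathoo B(\mathbb E)\bigr)\bigr)$, approximates $\bar n$ there by a continuous compactly supported $\bar k$ for which $T_r\bar k\to\bar k$ uniformly, concludes that $\Vert\bar n_r-\bar n\Vert_{L_1}\to0$, and then extracts an almost everywhere convergent subsequence by Proposition~\ref{p:sequence in L_1}. Your preferred density-point argument instead establishes the stronger assertion that the whole family $\bar n_r(x)$ converges to $\bar n(x)$ as $r\to+0$ for almost every $x$ --- precisely the vector-valued Lebesgue differentiation theorem that Remark~\ref{r:Stein} mentions and deliberately sidesteps on the grounds that the subsequence statement has an essentially easier proof. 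Your proof of the stronger statement is nevertheless sound: essential boundedness is exactly what makes the contribution of $B(x,r)\setminus K_{\alpha,j}$ controllable by $2C_\infty$ times a relative measure that vanishes at density points, so no maximal-function machinery is needed, and the double exhaustion over $j$ and $\alpha$ does produce a set of full measure; you correctly isolated this as the delicate point. The trade-off is clear: the paper's route is shorter and delivers only what is actually used later (one sequence $r_i\to0$), while yours is somewhat longer but removes the passage to a subsequence and rests only on Propositions~\ref{p:Stein:1:1} and~\ref{p:mes func}, already recalled in Section~\ref{s:Lebesgue integral}.
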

 \begin{remark}\label{r:Stein}
For a locally summable function $\bar n$ taking its values in $\mathbb R$, it is
known~\cite[ch.~1, Corollary 1 of Theorem 1]{Stein} that the whole family $\bar n_{r}$
converges almost everywhere to $\bar n$ as $r\to+0$ (Lebesgue's differentiation theorem).
For our aims, the weaker assertion formulated above (which has an essentially easier
proof) is enough.
 \end{remark}

 \begin{proof}
Without loss of generality we may assume that $\bar n$ has a compact support, and thus
$\bar n\in\mathscr L_1\bigl(\mathbb R^c,L_1\bigl(\mathbb R^c,\mathoo B(\mathbb
E)\bigr)\bigr)$.

We consider the convolution operator
\begin{equation*}
\bigl(T_r\bar n\bigr)(x)=\frac1{\mu\bigl(B(0,r)\bigr)}\int_{B(0,r)}\bar n(x-y)\,dy.
\end{equation*}
It is known (see, e.g.~\cite[Theorem 4.4.4(a)]{Kurbatov99}) that
\begin{equation*}
\bigl\Vert T_r:\,L_1\bigl(\mathbb R^c,L_1\bigl(\mathbb R^c,\mathoo B(\mathbb
E)\bigr)\bigr)\to L_1\bigl(\mathbb R^c,L_1\bigl(\mathbb R^c,\mathoo B(\mathbb
E)\bigr)\bigr)\bigr\Vert\le1.
\end{equation*}

We recall that we consider the case $\bar n\in\mathscr L_1\bigl(\mathbb
R^c,L_1\bigl(\mathbb R^c,\mathoo B(\mathbb E)\bigr)\bigr)$. We take an arbitrary
$\varepsilon>0$ and a continuous function $\bar k$ with a compact support such that
$\Vert\bar k-\bar n\Vert<\varepsilon$ (the latter is possible by the
definition~\cite[ch.~4, \S~3, Definition 2]{Bourbaki Int'egration} of~$\mathscr L_1$). We
have
\begin{equation*}
\Vert T_r\bar n-\bar n\Vert_{L_1}\le\Vert T_r(\bar n-\bar k)\Vert_{L_1}+\Vert T_r\bar
k-\bar k\Vert_{L_1}+\Vert\bar k-\bar n\Vert_{L_1}\le2\varepsilon+\Vert T_r\bar k-\bar
k\Vert_{L_1}.
\end{equation*}
Since $\bar k$ is uniformly continuous, $T_r\bar k$ converges uniformly to $\bar k$ and
hence in $L_1$-norm. Thus, if $r$ is small enough, $\Vert T_r\bar k-\bar
k\Vert_{L_1}<\varepsilon$. Hence, if $r$ is small enough, we have
\begin{equation*}
\Vert\bar n_r-\bar n\Vert_{L_1}=\Vert T_r\bar n-\bar n\Vert_{L_1}\le3\varepsilon.
\end{equation*}
Thus $\bar n_r$ converges to $\bar n$ in $L_1$-norm. By Proposition~\ref{p:sequence in
L_1}, we can choose a sequence $r_i\to0$ such that $\bar n_{r_i}$ converges to $\bar n$
almost everywhere.
 \end{proof}

\section{The class $\mathoo C$}\label{e:Cu and C}
For any $\alpha\in\mathbb N$ and $1\le p\le\infty$, we denote by $L_p^\alpha$ the
subspace of $L_p$ that consists of all $u\in L_p$ such that
\begin{equation*}
u(x)=0\qquad\text{for }x\notin[-\alpha,\alpha]^c,
\end{equation*}
and we denote by $L_p^{\setminus\alpha}$ the subspace of $L_p$ that consists of all $u\in
L_p$ such that
\begin{equation*}
u(x)=0\qquad\text{for }x\in[-\alpha,\alpha]^c.
\end{equation*}
We denote by $\mathoo t_f=\mathoo t_f(L_p)$ the set of all operators $T\in\mathoo B(L_p)$
possessing the property: for any $\alpha\in\mathbb N$ there exists $\gamma\in\mathbb N$
such that
\begin{align*}
TL_p^\alpha&\subseteq L_p^\gamma,\\
TL_p^{\setminus\gamma}&\subseteq L_p^{\setminus\alpha}.
\end{align*}
We denote by $\mathoo t=\mathoo t(L_p)$ the closure of $\mathoo t_f(L_p)$ in norm. The
classes $\mathoo t_f$ and $\mathoo t$ were considered in~\cite{Kurbatov90}
and~\cite[\S~5.5]{Kurbatov99}.

 \begin{proposition}\label{p:C in t}
Let $1\le p\le\infty$. Then the class $\mathoo N_1(L_p)$ is included into $\mathoo
t(L_p)$.
 \end{proposition}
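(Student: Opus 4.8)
The plan is to realize every $N\in\mathoo N_1(L_p)$ as a limit, in the operator norm, of operators lying in $\mathoo t_f(L_p)$; since $\mathoo t(L_p)$ is by definition the norm closure of $\mathoo t_f(L_p)$, this yields the asserted inclusion. The approximants will be produced by truncating the kernel $n$ in its second (``band'') variable to a ball $B(0,R)$: an integral operator~\eqref{e:operator N} whose kernel vanishes for $|y|>R$ moves supports by at most $R$, which is precisely the finite-support condition defining $\mathoo t_f$.

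In detail, let $N$ be induced by $n\in\mathoo N_1(\mathbb R^c,\mathbb E)$ with $\Vert n(x,y)\Vert\le\beta(y)$, $\beta\in\mathscr L_1$. For $R>0$ I set $n_R(x,y)=n(x,y)$ when $|y|\le R$ and $n_R(x,y)=0$ otherwise; then $n_R$ is measurable and $\Vert n_R(x,y)\Vert\le\beta(y)\mathbf1_{\{|y|\le R\}}(y)$, so $n_R\in\mathoo N_1$ and, by Proposition~\ref{p:5.4.3}, it induces a bounded operator $N_R$ on $L_p$. Writing $|x|_\infty=\max_{1\le j\le c}|x_j|$, so that $[-\alpha,\alpha]^c=\{x:|x|_\infty\le\alpha\}$ and $|x|_\infty\le|x|$, I would verify $N_R\in\mathoo t_f(L_p)$ as follows. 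Given $\alpha\in\mathbb N$, choose $\gamma\in\mathbb N$ with $\gamma\ge\alpha+R$. If $u\in L_p^\alpha$ and $|x|_\infty>\gamma$, then for every $y$ with $|y|_\infty\le\alpha$ we have $|x-y|\ge|x-y|_\infty\ge|x|_\infty-|y|_\infty>\gamma-\alpha\ge R$, hence $n_R(x,x-y)=0$; since $u(y)=0$ whenever $|y|_\infty>\alpha$, the integrand in~\eqref{e:operator N} vanishes for almost all $y$, so $(N_Ru)(x)=0$, i.e.\ $N_RL_p^\alpha\subseteq L_p^\gamma$. Symmetrically, if $u\in L_p^{\setminus\gamma}$ and $|x|_\infty\le\alpha$, then for every $y$ with $|y|_\infty>\gamma$ one has $|x-y|\ge|y|_\infty-|x|_\infty>\gamma-\alpha\ge R$, so again $n_R(x,x-y)=0$ and $(N_Ru)(x)=0$, i.e.\ $N_RL_p^{\setminus\gamma}\subseteq L_p^{\setminus\alpha}$.

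Finally I would estimate $N-N_R$. For a fixed $u\in\mathscr L_p$ and almost every $x$, both $y\mapsto n(x,x-y)u(y)$ and $y\mapsto n_R(x,x-y)u(y)$ are integrable (Proposition~\ref{p:5.4.3}), hence so is their difference $y\mapsto n(x,x-y)u(y)-n_R(x,x-y)u(y)$, and its integral equals $(Nu)(x)-(N_Ru)(x)$. Thus $N-N_R$ is the integral operator with kernel $n-n_R\in\mathoo N_1$, which admits the majorant $\beta\,\mathbf1_{\{|y|>R\}}$, and~\eqref{e:norm of N} gives
\[
\Vert N-N_R:\,L_p\to L_p\Vert\le\int_{\{|y|>R\}}\beta(y)\,dy .
\]
Since $\beta\in\mathscr L_1$, the right-hand side tends to $0$ as $R\to\infty$, so $N$ is the norm limit of the operators $N_R\in\mathoo t_f(L_p)$ and therefore $N\in\mathoo t(L_p)$.

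The whole argument is elementary; the one spot that needs care is the identification of $N-N_R$ with the operator of kernel $n-n_R$ (needed so that the majorant bound~\eqref{e:norm of N} applies to it), while the choice of the enlarged cube $[-\gamma,\gamma]^c$ and the two support inclusions amount to routine bookkeeping with the sup-norm $|\cdot|_\infty$ against the Euclidean norm $|\cdot|$.
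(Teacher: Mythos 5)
Your proof is correct and follows essentially the same route as the paper: truncate the kernel in the displacement variable (the paper uses the cube $[-\delta,\delta]^c$ where you use the ball $B(0,R)$), check that the truncated operators lie in $\mathoo t_f(L_p)$ via the two support inclusions, and use estimate~\eqref{e:norm of N} applied to the tail kernel to get norm convergence. The extra detail you supply on the support bookkeeping and on identifying $N-N_R$ with the operator of kernel $n-n_R$ is fine and matches what the paper leaves implicit.
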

 \begin{proof} For any $\delta\in\mathbb N$, we consider the operator
\begin{equation*}
\bigl(N_\delta u\bigr)(x)=\int_{x+[-\delta,\delta]^c}n(x,x-y)\,u(y)\,dy.
\end{equation*}
Clearly, $N_\delta\in\mathoo N_1$. From estimate~\eqref{e:norm of N} we have
\begin{equation*}
\Vert N_\delta-N\Vert\le\int_{\mathbb R^c\setminus[-\delta,\delta]^c}\beta(y)\,dy,
\end{equation*}
which implies $N_\delta\to N$ as $\delta\to\infty$. It remains to observe that
\begin{align*}
N_\delta L_p^\alpha&\subseteq L_p^{\alpha+\delta},\\
N_\delta L_p^{\setminus(\alpha+\delta)}&\subseteq L_p^{\setminus\alpha}.
\end{align*}
Thus, $N_\delta\in\mathoo t_f$.
 \end{proof}

Clearly, the operator
\begin{equation*}
\bigl(S_hu\bigr)(x)=u(x-h),
\end{equation*}
where $h\in\mathbb R^c$, acts in $L_p$ for all $1\le p\le\infty$ and $\Vert S_h\Vert=1$.

 \begin{proposition}\label{p:S_hNS_-h}
Let $n\in\mathoo N_1$, and the operator $N$ be defined by formula~\eqref{e:operator N}.
Then for any $h\in\mathbb R^c$ the operator $S_hNS_{-h}$ is defined by the formula
\begin{equation*}
\bigl(S_hNS_{-h}u\bigr)(x)=\int_{\mathbb R^c}n(x-h,x-y)\,u(y)\,dy.
\end{equation*}
 \end{proposition}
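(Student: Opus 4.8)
The plan is simply to verify the formula by substituting the definitions of $S_{\pm h}$ and $N$ and performing a translation of the integration variable; the only real content is bookkeeping of the ``almost every $x$'' clauses furnished by Proposition~\ref{p:5.4.3} together with the translation invariance of Lebesgue measure, so the argument is a ``correct usage of the Lebesgue integral'' of the kind announced in the introduction rather than anything deep.

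First I would record that $S_{-h}$ maps $\mathscr L_p(\mathbb R^c,\mathbb E)$ into itself: if $u$ is measurable then so is $x\mapsto u(x+h)$, and, by translation invariance of the Lebesgue measure, $\Vert S_{-h}u\Vert_{L_p}=\Vert u\Vert_{L_p}$ for every $1\le p\le\infty$. Hence, for $u\in\mathscr L_p$, the function $S_{-h}u$ lies in $\mathscr L_p$, and Proposition~\ref{p:5.4.3} applies to it: for almost all $x$ the function $y\mapsto n(x,x-y)\,(S_{-h}u)(y)=n(x,x-y)\,u(y+h)$ is integrable and
\[
\bigl(NS_{-h}u\bigr)(x)=\int_{\mathbb R^c}n(x,x-y)\,u(y+h)\,dy .
\]
For each $x$ in this full-measure set I would then substitute $y\mapsto y-h$; the substitution is legitimate by translation invariance of the Lebesgue measure and yields $\bigl(NS_{-h}u\bigr)(x)=\int_{\mathbb R^c}n(x,x-y+h)\,u(y)\,dy$ for almost all $x$.

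Applying $S_h$ and using that the translate by $h$ of a set of full measure again has full measure, I obtain, for almost all $x$,
\[
\bigl(S_hNS_{-h}u\bigr)(x)=\bigl(NS_{-h}u\bigr)(x-h)=\int_{\mathbb R^c}n\bigl(x-h,(x-h)-y+h\bigr)\,u(y)\,dy=\int_{\mathbb R^c}n(x-h,x-y)\,u(y)\,dy ,
\]
which is the asserted formula. To see that the right-hand side is indeed well defined on all of $\mathscr L_p$ (and descends to $L_p$), I would observe that $n'(x,y)=n(x-h,y)$ is measurable, being the composition of the measurable $n$ with the homeomorphism $(x,y)\mapsto(x-h,y)$, and that $\Vert n'(x,y)\Vert=\Vert n(x-h,y)\Vert\le\beta(y)$ for almost all $(x,y)$, so $n'\in\mathoo N_1(\mathbb R^c,\mathbb E)$ and Proposition~\ref{p:5.4.3} applies to $n'$. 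Since $S_{\pm h}$ are isometries of $L_p$ and $N\in\mathoo B(L_p)$, the operator $S_hNS_{-h}$ is a well-defined element of $\mathoo B(L_p)$, and the computation above identifies its action with that of the integral operator with kernel $n'$.

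The main (and essentially only) obstacle is the pedantic handling of the null sets: one must make sure that the change of variables and the shift by $h$ are performed only after invoking Proposition~\ref{p:5.4.3}, which guarantees integrability of the relevant slices for almost every $x$, and that the successive exceptional sets are still null after translation. There is nothing conceptually hard beyond this.
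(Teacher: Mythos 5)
Your proposal is correct and follows essentially the same route as the paper: substitute the definitions of $S_{\pm h}$ and $N$ and perform the translation $y\mapsto y-h$ in the integral. The paper's own proof is just this two-line formal computation; your additional bookkeeping of the null sets and the observation that the translated kernel again lies in $\mathoo N_1$ are correct refinements that the paper leaves implicit.
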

 \begin{proof}
One has
\begin{align*}
\bigl(NS_{-h}u\bigr)(x)&=\int_{\mathbb R^c}n(x,x-y)\,u(y+h)\,dy,\\
\bigl(S_hNS_{-h}u\bigr)(x)&=\int_{\mathbb R^c}n(x-h,x-h-y)\,u(y+h)\,dy\\
&=\int_{\mathbb R^c}n(x-h,x-y)\,u(y)\,dy.\qed
\end{align*}
\renewcommand\qed{}
 \end{proof}

We denote by $\mathoo C_u=\mathoo C_u(L_p)$ the set of all operators $T\in\mathoo B(L_p)$
such that the function
\begin{equation}\label{e:S_hTS_-h}
h\mapsto S_hTS_{-h},\qquad h\in\mathbb R^c,
\end{equation}
is continuous in norm. Clearly, if function~\eqref{e:S_hTS_-h} is continuous at zero, it
is continuous everywhere. The class $\mathoo C_u$ was considered
in~\cite[\S~5.6]{Kurbatov99}; see also~\cite{Kurbatov10}, where the case of the strongly
differentiable function~\eqref{e:S_hTS_-h} was considered.

We denote $\mathoo C=\mathoo C(L_p)$ the set of all operators $T\in\mathoo t(L_p)$ such
that the restriction of function~\eqref{e:S_hTS_-h} to $L_p^\alpha$ is continuous in norm
for all $\alpha\in\mathbb N$. The class $\mathoo C$ was discussed in~\cite{Kurbatov88}
and~\cite[\S~5.6]{Kurbatov99}.

 \begin{theorem}[{\rm\cite[Proposition~5.6.1]{Kurbatov99}}\bf]\label{t:5.6.1}
Let $1\le p\le\infty$. If an operator $T\in\mathoo C_u(L_p)$ is invertible, then
$T^{-1}\in\mathoo C_u(L_p)$.
 \end{theorem}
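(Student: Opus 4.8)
The plan is to exploit the fact that conjugation by the shift $S_h$ commutes with inversion, combined with the norm-continuity of the inversion map in the Banach algebra $\mathoo B(L_p)$.

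First I would observe that $S_h$ is invertible in $\mathoo B(L_p)$ with $S_h^{-1}=S_{-h}$, which is immediate from $\bigl(S_hu\bigr)(x)=u(x-h)$ and holds for every $1\le p\le\infty$. Consequently, for each $h\in\mathbb R^c$ the operator $S_hTS_{-h}$ is invertible, with
\begin{equation*}
\bigl(S_hTS_{-h}\bigr)^{-1}=S_hT^{-1}S_{-h},
\end{equation*}
since $S_hT^{-1}S_{-h}\cdot S_hTS_{-h}=S_hT^{-1}TS_{-h}=\mathbf1$ and likewise in the reverse order. Thus the map $h\mapsto S_hTS_{-h}$ takes values in the open set $G$ of invertible elements of $\mathoo B(L_p)$, and by hypothesis it is continuous in norm.

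Next I would use the standard fact that inversion $A\mapsto A^{-1}$ is continuous on $G$: if $A\in G$ and $\Vert B-A\Vert<\Vert A^{-1}\Vert^{-1}$, then $B\in G$ (Neumann series for $\mathbf1+A^{-1}(B-A)$) and
\begin{equation*}
\bigl\Vert B^{-1}-A^{-1}\bigr\Vert\le\frac{\Vert A^{-1}\Vert^{2}\,\Vert B-A\Vert}{1-\Vert A^{-1}\Vert\,\Vert B-A\Vert}.
\end{equation*}
Composing the continuous map $h\mapsto S_hTS_{-h}$ with the continuous map $A\mapsto A^{-1}$ shows that $h\mapsto S_hT^{-1}S_{-h}=\bigl(S_hTS_{-h}\bigr)^{-1}$ is continuous in norm, that is, $T^{-1}\in\mathoo C_u(L_p)$. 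Alternatively, one may apply the displayed estimate only at $A=T$ to obtain continuity at $h=0$, which, as already observed, is enough.

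I do not expect a genuine obstacle here: the whole argument rests on the algebraic identity $\bigl(S_hTS_{-h}\bigr)^{-1}=S_hT^{-1}S_{-h}$ and on the Neumann-series continuity of inversion. The only point that warrants a line of care is making the identity $S_h^{-1}=S_{-h}$ and the two-sided nature of the above inverse explicit, so that the conclusion is valid uniformly in $p$.
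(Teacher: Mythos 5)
Your proof is correct: the identity $(S_hTS_{-h})^{-1}=S_hT^{-1}S_{-h}$ together with the norm-continuity of inversion on the group of invertibles in $\mathoo B(L_p)$ gives the conclusion immediately. The paper itself offers no proof here, merely citing \cite[Proposition~5.6.1]{Kurbatov99}, and your argument is the standard one underlying that reference.
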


 \begin{theorem}[{\rm\cite[Theorem~5.6.3]{Kurbatov99}}\bf]\label{t:5.6.3}
Let $1\le p\le\infty$. If an operator $T\in\mathoo C(L_p)$ is invertible, then
$T^{-1}\in\mathoo C(L_p)$.
 \end{theorem}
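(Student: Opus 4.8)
The plan is to verify the two conditions defining membership in $\mathoo C(L_p)$: that $T^{-1}\in\mathoo t(L_p)$, and that for every $\alpha\in\mathbb N$ the operator-valued map $h\mapsto S_hT^{-1}S_{-h}\chi_\alpha$ is norm-continuous, where $\chi_r\in\mathoo B(L_p)$ denotes multiplication by the indicator of $[-r,r]^c$ (the projection onto $L_p^r$). It suffices to check this continuity at $h=0$: continuity at an arbitrary $h_0$ reduces to it on a slightly larger subspace $L_p^\beta$ via $S_hT^{-1}S_{-h}\chi_\alpha=S_{h_0}\bigl(S_{h-h_0}T^{-1}S_{-(h-h_0)}\bigr)S_{-h_0}\chi_\alpha$ after absorbing the support of $S_{-h_0}\chi_\alpha$ into $\chi_\beta$. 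Write $S=T^{-1}$ and $T_h=S_hTS_{-h}$; this is an isometric conjugate of $T$, so $\Vert T_h\Vert=\Vert T\Vert$ and $T_h^{-1}=S_hSS_{-h}$. The one substantial ingredient is the inverse-closedness of the class $\mathoo t$: since $T\in\mathoo C(L_p)\subseteq\mathoo t(L_p)$ is invertible, $S\in\mathoo t(L_p)$. I would invoke this as part of the theory of that class (cf. \cite[\S~5.5]{Kurbatov99}), or reprove it by the same localize-and-perturb scheme used below; it disposes of the requirement $T^{-1}\in\mathoo t(L_p)$, and I now turn to the continuity.

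Fix $\alpha$ and $\varepsilon>0$. Since $T,S\in\mathoo t$, choose radii $\alpha\le\gamma\le\delta$ with $\Vert(\mathbf1-\chi_{\gamma-1})S\chi_{\alpha+1}\Vert<\varepsilon$ and $\Vert(\mathbf1-\chi_\delta)T\chi_\gamma\Vert<\varepsilon$. Because each $S_{\pm h}$ moves a cube $[-r,r]^c$ by at most $|h|$, a short computation --- insert $S_{-h}\chi_\alpha=\chi_{[-\alpha,\alpha]^c-h}\chi_{\alpha+1}S_{-h}$ and $(\mathbf1-\chi_\gamma)S_h=S_h(\mathbf1-\chi_{[-\gamma,\gamma]^c-h})$, then note $[-\gamma,\gamma]^c-h\supseteq[-\gamma+1,\gamma-1]^c$ --- gives
\begin{equation*}
\Vert(\mathbf1-\chi_\gamma)T_h^{-1}\chi_\alpha\Vert\le\Vert(\mathbf1-\chi_{\gamma-1})S\chi_{\alpha+1}\Vert<\varepsilon\qquad\text{for all }|h|\le1 .
\end{equation*}
Thus, uniformly in $|h|\le1$, the operator $T_h^{-1}\chi_\alpha$ differs by at most $\varepsilon$ in norm from its localization $X_h:=\chi_\gamma T_h^{-1}\chi_\alpha$, so it remains to prove that $h\mapsto X_h$ is continuous at $0$.

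Set $A_h:=\chi_\delta T_h\chi_\gamma$. Since $T\in\mathoo C$, the map $h\mapsto S_hTS_{-h}\chi_\gamma$ is norm-continuous at $0$, hence so is $h\mapsto A_h$. The choice of $\delta$ makes $A_0$ bounded below on $L_p^\gamma$: for $v\in L_p^\gamma$ one has $\Vert A_0v\Vert\ge\Vert Tv\Vert-\Vert(\mathbf1-\chi_\delta)T\chi_\gamma v\Vert\ge\bigl(\Vert T^{-1}\Vert^{-1}-\varepsilon\bigr)\Vert v\Vert$; hence, once $\varepsilon$ is small, $A_h$ is bounded below on $L_p^\gamma$ for $|h|$ small, by the continuity just noted. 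The key identity is
\begin{equation*}
A_hX_h=\chi_\delta T_h\chi_\gamma T_h^{-1}\chi_\alpha=\chi_\alpha-\chi_\delta T_h(\mathbf1-\chi_\gamma)T_h^{-1}\chi_\alpha ,
\end{equation*}
whose last term has norm $\le\Vert T\Vert\,\Vert(\mathbf1-\chi_\gamma)T_h^{-1}\chi_\alpha\Vert\le\Vert T\Vert\varepsilon$. Subtracting this identity at $h$ and at $0$, using that $X_h-X_0$ maps into $L_p^\gamma$ (where $A_h$ is bounded below) and that $\Vert A_h-A_0\Vert\to0$, one obtains $\Vert X_h-X_0\Vert\le C\varepsilon$ for $|h|$ small, with $C$ depending only on $\Vert T\Vert$ and $\Vert T^{-1}\Vert$. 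Together with the localization bound this gives $\Vert S_hT^{-1}S_{-h}\chi_\alpha-T^{-1}\chi_\alpha\Vert\le(C+2)\varepsilon$ for $|h|$ small; since $\varepsilon>0$ was arbitrary, the map is continuous at $0$, and therefore $T^{-1}\in\mathoo C(L_p)$.

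I expect the main obstacle to be the first step --- the inverse-closedness of $\mathoo t$ --- which is where the real weight lies; with it in hand, the remainder is the familiar ``localize, then perturb'' pattern, delicate only in the bookkeeping: $\gamma$ and $\delta$ must be fixed before $h$ is allowed to vary and must serve uniformly on a fixed neighbourhood of $0$, which is precisely why reducing to $h$ near $0$ is legitimate. In contrast with the analogue for $\mathoo C_u$ (Theorem~\ref{t:5.6.1}), one cannot perturb $T^{-1}$ directly, because $\Vert T_h-T\Vert$ need not be small for $h\ne0$; the cutoffs must be inserted first, and the hypothesis $T\in\mathoo C$ is used exactly to make the truncation $A_h$ depend continuously on $h$. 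The argument uses only operator-norm estimates, so nothing changes for $p=\infty$.
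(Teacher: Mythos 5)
The paper contains no proof of this theorem: it is imported wholesale from \cite[Theorem~5.6.3]{Kurbatov99}, so there is no internal argument to compare yours with, and I can only assess your proposal on its merits. The continuity half of your argument is correct. I verified the reduction of continuity at an arbitrary $h_0$ to continuity at $0$ on a larger cube; the uniform localization bound $\Vert(\mathbf1-\chi_\gamma)T_h^{-1}\chi_\alpha\Vert\le\Vert(\mathbf1-\chi_{\gamma-1})T^{-1}\chi_{\alpha+1}\Vert<\varepsilon$ for $|h|\le1$ (using $S_{-h}\chi_\alpha=\chi_{[-\alpha,\alpha]^c-h}S_{-h}$ and $(\mathbf1-\chi_\gamma)S_h=S_h(\mathbf1-\chi_{[-\gamma,\gamma]^c-h})$); the lower bound for $A_h=\chi_\delta T_h\chi_\gamma$ on $L_p^\gamma$ for small $|h|$, which is exactly where the hypothesis $T\in\mathoo C$ enters; the identity $A_hX_h=\chi_\alpha-\chi_\delta T_h(\mathbf1-\chi_\gamma)T_h^{-1}\chi_\alpha$ with error of norm at most $\Vert T\Vert\varepsilon$; and the resulting estimate $\Vert X_h-X_0\Vert\le C\varepsilon$ with $C$ depending only on $\Vert T\Vert$ and $\Vert T^{-1}\Vert$. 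The quantifier bookkeeping ($\varepsilon$, then $\gamma,\delta$, then $h$) is handled correctly, and the argument is indeed $p$-independent.

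The weak point is the first step. Membership in $\mathoo C(L_p)$ has two halves, and you prove only the second; the first, $T^{-1}\in\mathoo t(L_p)$, you invoke from \cite[\S~5.5]{Kurbatov99} (it is the inverse-closedness circle of results for $\mathoo t$ going back to \cite{Kurbatov90}). As a citation this is defensible, and you are right that this is where the real weight of the theorem lies; but your fallback remark that it ``could be reproved by the same localize-and-perturb scheme used below'' is not convincing. In your scheme the cut-off $\gamma$ is chosen by already knowing $T^{-1}\in\mathoo t$ (that is precisely how $\Vert(\mathbf1-\chi_{\gamma-1})T^{-1}\chi_{\alpha+1}\Vert<\varepsilon$ is obtained), so the scheme cannot be turned around to establish that membership without circularity. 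Moreover, $\mathoo t$ is defined as the norm closure of $\mathoo t_f$, so one must actually manufacture approximants of $T^{-1}$ lying in $\mathoo t_f$, and at that stage no continuity of $T^{-1}$ under the shifts is available to help; for $L_p(\mathbb R^c)$ the subspaces $L_p^\alpha$ are infinite-dimensional, so nothing is automatic. In sum, what you have written is a correct proof of the implication: if $T\in\mathoo C$ is invertible and $T^{-1}\in\mathoo t$, then $T^{-1}\in\mathoo C$. The remaining ingredient $T^{-1}\in\mathoo t$ is a genuinely separate theorem, which you should either state and cite explicitly as such or prove by an argument of a different nature.
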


\section{The class $\mathoo C\mathoo N_1$}\label{s:CN_1}
We denote by $\mathoo C_u\mathoo N_1=\mathoo C_u\mathoo N_1(\mathbb R^c,\mathbb E)$ the
class of kernels $n\in\mathoo N_1$ such that the function $n$ can be redefined on a set
of measure zero so that it becomes defined everywhere, estimate~\eqref{e:est via beta}
holds for all $x$ and $y$, and the corresponding function $x\mapsto\bar n(x)$ becomes
\emph{uniformly} continuous in the norm of $\mathscr L_1\bigl(\mathbb R^c,\mathoo
B(\mathbb E)\bigr)$.

We denote by $\mathoo C\mathoo N_1=\mathoo C\mathoo N_1(\mathbb R^c,\mathbb E)$ the class
of kernels $n\in\mathoo N_1$ such that the function $n$ can be redefined on a set of
measure zero so that it becomes defined everywhere, estimate~\eqref{e:est via beta} holds
for all $x$ and $y$, and the corresponding function $x\mapsto\bar n(x)$ becomes
continuous in the norm of $L_1\bigl(\mathbb R^c,\mathoo B(\mathbb E)\bigr)$.

 \begin{theorem}\label{t:continuity of bar n:C_u}
Let $n\in\mathoo N_1(\mathbb R^c,\mathbb E)$, and the operator $N$ be defined by
formula~\eqref{e:operator N}. If the operator $N$ belongs to $\mathoo C_u(L_\infty)$,
then $n\in\mathoo C_u\mathoo N_1$.
 \end{theorem}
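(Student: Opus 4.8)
The plan is to translate the operator-level continuity hypothesis $N\in\mathoo C_u(L_\infty)$ into pointwise continuity of the map $x\mapsto\bar n(x)$ from $\mathbb R^c$ into $\mathscr L_1\bigl(\mathbb R^c,\mathoo B(\mathbb E)\bigr)$, after a suitable redefinition of $n$ on a null set. The key link is Proposition~\ref{p:norm in L_infty}: applied to the kernel $n(x-h,x-y)-n(x-h',x-y)$ of the operator $S_hNS_{-h}-S_{h'}NS_{-h'}$ (using Proposition~\ref{p:S_hNS_-h}), it gives
\begin{equation*}
\esssup_{x}\,\bigl\Vert\bar n(x-h)-\bar n(x-h')\bigr\Vert_{L_1}\le C\,\bigl\Vert S_hNS_{-h}-S_{h'}NS_{-h'}:\,L_\infty\to L_\infty\bigr\Vert.
\end{equation*}
By the hypothesis $N\in\mathoo C_u(L_\infty)$ the right-hand side tends to $0$ as $h-h'\to0$; equivalently, setting $\omega(\delta)=\sup_{|h|\le\delta}\bigl\Vert S_hNS_{-h}-N\bigr\Vert$, we have $\omega(\delta)\to0$ and $\bigl\Vert\bar n(\cdot-h)-\bar n(\cdot)\bigr\Vert_{L_\infty(\mathbb R^c,L_1)}\le C\,\omega(|h|)$.

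Next I would promote this "continuity in the $L_\infty$-sense" to genuine uniform continuity after modification on a null set. The standard device is mollification: put $\bar n_r(x)=\mu(B(0,r))^{-1}\int_{B(0,r)}\bar n(x-y)\,dy$ as in Section~\ref{s:N_1}; each $\bar n_r$ is defined everywhere, continuous, and in fact \emph{uniformly} continuous because
\begin{equation*}
\bigl\Vert\bar n_r(x)-\bar n_r(x')\bigr\Vert_{L_1}=\Bigl\Vert\tfrac1{\mu(B(0,r))}\!\int_{B(0,r)}\!\bigl(\bar n(x-y)-\bar n(x'-y)\bigr)\,dy\Bigr\Vert_{L_1}\le C\,\omega(|x-x'|).
\end{equation*}
The same estimate shows the family $\{\bar n_r\}_{r>0}$ is uniformly equicontinuous with a modulus independent of $r$. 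By Proposition~\ref{p:bar n_r} there is a sequence $r_i\to0$ with $\bar n_{r_i}\to\bar n$ almost everywhere (in $\mathbb R^c$, in $L_1$-norm). Now use the equicontinuity: a sequence of functions with a common modulus of continuity that converges pointwise a.e. converges pointwise \emph{everywhere} on the closure of its set of convergence, hence everywhere, and the limit inherits that modulus of continuity. Define the redefined kernel by $\tilde n(x,y)=\lim_i\bar n_{r_i}(x)(x-y)$ whenever the limit exists in $L_1$; this $\tilde n$ is defined for all $x$, agrees with $n$ a.e., and $x\maps\to\overline{\tilde n}(x)$ is uniformly continuous into $\mathscr L_1$.

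The remaining point is to check that the redefined $\tilde n$ still satisfies the pointwise estimate $\Vert\tilde n(x,y)\Vert\le\beta(y)$ for \emph{all} $x,y$, possibly after enlarging $\beta$; this is where the main technical obstacle lies, since mollification in $x$ does not obviously preserve a pointwise-in-$y$ bound. The fix is to first mollify $\beta$ as well: replacing $\beta$ by its $r$-average $\beta_r$, one has $\Vert\bar n_r(x)(y)\Vert\le\beta_r(y)$ for all $x$ and a.e.\ $y$, and $\beta_r\in\mathscr L_1$ with $\Vert\beta_r\Vert_{L_1}\le\Vert\beta\Vert_{L_1}$; choosing $r_i\to0$ so that simultaneously $\bar n_{r_i}\to\bar n$ a.e.\ and $\beta_{r_i}\to\beta^\ast$ a.e.\ for some $\beta^\ast\in\mathscr L_1$ (again via Proposition~\ref{p:bar n_r} applied to $\beta$), and passing to the limit, gives $\Vert\tilde n(x,y)\Vert\le\beta^\ast(y)$ for all $x$ and a.e.\ $y$; a final redefinition of $\tilde n$ and $\beta^\ast$ on the exceptional $y$-null set makes the estimate hold everywhere. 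Thus $n$ can be redefined so that it is defined everywhere, estimate~\eqref{e:est via beta} holds for all $x,y$, and $x\mapsto\bar n(x)$ is uniformly continuous in the $\mathscr L_1$-norm, i.e.\ $n\in\mathoo C_u\mathoo N_1$.
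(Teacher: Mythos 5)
Your proposal is correct and follows essentially the same route as the paper: the translation estimate obtained from Propositions~\ref{p:S_hNS_-h} and~\ref{p:norm in L_infty}, mollification of $\bar n$, and the a.e.-convergent subsequence of Proposition~\ref{p:bar n_r} used to identify the (uniformly) continuous limit with $\bar n$ off a null set. The only divergence is in securing the pointwise bound $\Vert n_0(x,y)\Vert\le\beta(y)$ for the redefined kernel: the paper approximates a bad $x$ by points where the bound already holds for every $y$ and extracts an a.e.-convergent subsequence via Proposition~\ref{p:sequence in L_1}, whereas you mollify $\beta$ as well and pass to the limit; both devices work.
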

 \begin{proof}
From Propositions~\ref{p:S_hNS_-h} and~\ref{p:norm in L_infty} it follows that
\begin{equation*}
\esssup_x\Vert\bar n(x-h)-\bar n(x)\Vert_{L_1}\le C\Vert S_hNS_{-h}-N:\,L_\infty\to
L_\infty\Vert.
\end{equation*}
We recall that the assumption $N\in\mathoo C_u(L_\infty)$ means that
\begin{equation*}
\forall\varepsilon>0\quad\exists R>0\quad\forall(h:\,|h|<R)\qquad \Vert
S_hNS_{-h}-N:\,L_\infty\to L_\infty\Vert<\varepsilon,
\end{equation*}
which implies
\begin{equation*}
\forall\varepsilon>0\quad\exists R>0\quad\forall(h:\,|h|<R)\qquad \esssup_{x\in\mathbb
R^c}\Vert \bar n(x-h)-\bar n(x)\Vert_{L_1}<C\varepsilon.
\end{equation*}
Next from the estimate (for the sake of definiteness we assume that $s<r$)
\begin{multline*}
\Vert\bar n_r(x)-\bar n_s(x)\Vert
 =\biggl\Vert\frac1{\mu\bigl(B(0,r)\bigr)}\int_{B(0,r)}\bar n(x-y)\,dy-
\frac1{\mu\bigl(B(0,s)\bigr)}\int_{B(0,s)}\bar n(x-z)\,dz\biggr\Vert\\
 =\biggl\Vert\frac1{\mu\bigl(B(0,r)\bigr)}\int_{B(0,r)}\bar n(x-y)\,dy-
\Bigl(\frac sr\Bigr)^c\frac1{\mu\bigl(B(0,s)\bigr)}\int_{B(0,r)}\bar n\Bigl(x-\frac sry\Bigr)\,dy\biggr\Vert\\
 =\biggl\Vert\frac1{\mu\bigl(B(0,r)\bigr)}\int_{B(0,r)}\bar n(x-y)\,dy-
\frac1{\mu\bigl(B(0,r)\bigr)}\int_{B(0,r)}\bar n\Bigl(x-\frac sry\Bigr)\,dy\biggr\Vert\\
 =\biggl\Vert\frac1{\mu\bigl(B(0,r)\bigr)}\int_{B(0,r)}\Bigl[\bar n(x-y)-
\bar n\Bigl(x-\frac sry\Bigr)\Bigr]\,dy\biggr\Vert\\
 \le \esssup_{x\in\mathbb R^c}\Vert \bar n(x-h)-\bar n(x)\Vert,
\end{multline*}
where $h=\bigl(1-\frac sr\bigr)y$ (clearly, $|h|=\bigl|\bigl(1-\frac sr\bigr)y\bigr|\le
r$ since $0<s<r$), it follows that
\begin{equation*}
\forall\varepsilon>0\quad\exists R>0\quad\forall(r,s:\,|r|,\,|s|<R)\quad\forall
x\in\mathbb R^c\qquad \Vert\bar n_r(x)-\bar n_s(x)\Vert<C\varepsilon.
\end{equation*}
Thus $\bar n_r$ converges uniformly to a function $\bar n_*:\,\mathbb R^c\to
L_1\bigl(\mathbb R^c,\mathoo B(\mathbb E)\bigr)$ as $r\to0$. Since the functions $\bar
n_r$ are continuous, the limit function $\bar n_*$ is also continuous. On the other hand,
by Proposition~\ref{p:bar n_r}, there exists a sequence $r_i\to0$ such that the functions
$\bar n_{r_i}$ converges to $\bar n$ almost everywhere. Consequently, $\bar n$ coincides
with a continuous function~$\bar n_*$ on a set $F_1$ of full measure.

Now we describe the desired redefinition $n_0:\,\mathbb R^c\times\mathbb R^c\to\mathoo
B(\mathbb E)$ of the function $n$.

First we consider the set $E$ of all points $(x,y)\in\mathbb R^c\times\mathbb R^c$ such
that estimate~\eqref{e:est via beta} does not hold. By assumption, $E$ is a set of
measure zero. We denote by $F_2$ the set of all $x\in\mathbb R^c$ such that the set
$E_x=\{\,y\in\mathbb R^c:\,(x,y)\in E\,\}$ has measure zero. By Corollary~\ref{c:Fubini},
the set $F_2$ is a set of full measure. For $x\in F_1\cap F_2$ and $y\in E_x$, we
redefine $n$ by the rule $n(x,y)=0$. So, estimate~\eqref{e:est via beta} holds for all
$y$ when $x\in F_1\cap F_2$ (we assume that $\beta$ is defined everywhere).

We set $n_0(x,y)=n(x,y)$ for $x\in F_1\cap F_2$ and all $y\in\mathbb R^c$ (for $x\notin
F_1\cap F_2$ the value $n_0(x,y)$ is yet undefined). By Corollary~\ref{c:Fubini}, $n$ and
$n_0$ coincide on a set of full measure. If we define $n_0(x,y)$ for $x\notin F_1\cap
F_2$ in an arbitrary way, $n_0$ and $n$ remain to be equivalent functions. The problem is
to make $\bar n_0$ continuous and to ensure estimate~\eqref{e:est via beta}. We note that
for any $x\in F_1\cap F_2$ the functions $\bar n(x)(y)=n(x,y)$ and $\bar
n_0(x)(y)=n_0(x,y)$ coincide.

Next we define $n_0(x,y)$ for $x\notin F_1\cap F_2$. Since $F_1\cap F_2$ is a set of full
measure, for any $x\notin F_1\cap F_2$ there exists a sequence $x_k\in F_1\cap F_2$ that
converges to $x$. By the continuity of $\bar n_*$, it follows that $\bar
n_*(x_{k})=n_*(x_k,\cdot)$ converges to $\bar n_*(x)$ in $L_1$-norm. By
Proposition~\ref{p:sequence in L_1}, this implies that there exists a subsequence $\bar
n_*(x_{k_i})$ that converges to $\bar n_*(x)$ (not only in $L_1$-norm, but also) almost
everywhere. So, we set $n_0(x,y)=\bar n_*(x)(y)$ for $y$'s such that
$\lim_{i\to\infty}\bar n_*(x_{k_i})(y)=\bar n_*(x)(y)$ (we recall that $\bar
n_*(x_{k_i})=\bar n(x_{k_i})$ since $x_{k_i}\in F_1$), and $n_0(x,y)=0$ otherwise. By the
definition of $n_0$, we have $\Vert n_0(x_{k_i},y)\Vert\le\beta(y)$ for all $y$.
Therefore $\Vert n_0(x,y)\Vert\le\beta(y)$ for almost all $y$. Finally, we redefine
$n_0(x,\cdot)$ on a set of measure zero so that the estimate $\Vert
n_0(x,y)\Vert\le\beta(y)$ holds for all $y$.
 \end{proof}

 \begin{theorem}\label{t:continuity of bar n:C}
Let $n\in\mathoo N_1(\mathbb R^c,\mathbb E)$, and the operator $N$ be defined by
formula~\eqref{e:operator N}. The operator $N$ belongs to $\mathoo C(L_\infty)$ if and
only if $n\in\mathoo C\mathoo N_1$.
 \end{theorem}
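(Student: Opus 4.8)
The plan is to adapt the argument of Theorem~\ref{t:continuity of bar n:C_u}, replacing \emph{uniform} continuity of the translated kernels by ordinary continuity, and to handle the two implications separately. For the ``only if'' direction, I would start from the observation (Proposition~\ref{p:S_hNS_-h}) that $S_hNS_{-h}$ is the integral operator with kernel $(x,y)\mapsto n(x-h,x-y)$, i.e.\ the kernel whose associated function is $x\mapsto\bar n(x-h)$. If $N\in\mathoo C(L_\infty)$, then for every $\alpha\in\mathbb N$ the restriction of $h\mapsto S_hNS_{-h}$ to $L_\infty^\alpha$ is norm-continuous at $0$. The point is to convert this into local $L_1$-continuity of $\bar n$: applying Proposition~\ref{p:norm in L_infty} to the operator $S_hNS_{-h}-N$ \emph{restricted}, or rather to a suitable localized kernel, one should obtain
\begin{equation*}
\esssup_{x\in[-\alpha,\alpha]^c}\Vert\bar n(x-h)-\bar n(x)\Vert_{L_1}\le C\Vert(S_hNS_{-h}-N)|_{L_\infty^{\alpha'}}\Vert
\end{equation*}
for an appropriate $\alpha'$ depending on $\alpha$ and on the (effective) support of $\beta$ — here one truncates the $y$-variable using $\beta\in L_1$, paying only an error $\int_{|y|\ge\rho}\beta$, so only the values of $u$ on a bounded set matter. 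This gives that $x\mapsto\bar n(x)$ is continuous on each cube $[-\alpha,\alpha]^c$, hence everywhere, in the $L_1$-norm. Then one runs the same averaging/redefinition machinery as in Theorem~\ref{t:continuity of bar n:C_u}: the mollified functions $\bar n_r$ are continuous, converge locally uniformly (now that continuity is only local, uniform convergence is also only local, which is enough), the limit $\bar n_*$ is continuous, it agrees with $\bar n$ a.e.\ by Proposition~\ref{p:bar n_r}, and finally $n$ is redefined off a null set using $\bar n_*$ together with the density theorem (Corollary~\ref{c:Fubini}) exactly as before, so that estimate~\eqref{e:est via beta} is restored pointwise and $x\mapsto\bar n_0(x)$ is continuous. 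Thus $n\in\mathoo C\mathoo N_1$.

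For the ``if'' direction I would argue: suppose $n\in\mathoo C\mathoo N_1$, so after redefinition on a null set, $x\mapsto\bar n(x)$ is continuous $\mathbb R^c\to L_1(\mathbb R^c,\mathoo B(\mathbb E))$ and $\Vert n(x,y)\Vert\le\beta(y)$ everywhere. By Proposition~\ref{p:C in t}, $N\in\mathoo t(L_\infty)$, so it remains to check that for each $\alpha$ the restriction of $h\mapsto S_hNS_{-h}$ to $L_\infty^\alpha$ is norm-continuous at $0$. For $u\in L_\infty^\alpha$ with $\Vert u\Vert_{L_\infty}\le1$, Proposition~\ref{p:S_hNS_-h} gives
\begin{equation*}
\bigl((S_hNS_{-h}-N)u\bigr)(x)=\int_{\mathbb R^c}\bigl(n(x-h,x-y)-n(x,x-y)\bigr)u(y)\,dy,
\end{equation*}
and since $u$ is supported in $[-\alpha,\alpha]^c$, for the $L_\infty$-norm in $x$ only $x$ ranging over a bounded enlargement of that cube contributes a nonzero value once we drop the tail of $\beta$; more precisely one splits the $y$-integral into $|x-y|\le\rho$ and $|x-y|>\rho$, bounding the tail by $2\int_{|y|>\rho}\beta(y)\,dy$ uniformly in $h$ and $x$, and on the compact remaining region one uses that $x\mapsto\bar n(x)$, being continuous on a compact set, is \emph{uniformly} continuous there, so $\Vert\bar n(x-h)-\bar n(x)\Vert_{L_1}$ is uniformly small for $|h|$ small. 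Putting the two estimates together yields $\Vert(S_hNS_{-h}-N)|_{L_\infty^\alpha}\Vert\to0$ as $h\to0$, i.e.\ $N\in\mathoo C(L_\infty)$.

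The main obstacle, as in the previous theorem, is bookkeeping with the Lebesgue integral: making precise the truncation in $y$ (so that only a bounded set of $x$'s matters for operators restricted to $L_\infty^\alpha$), and — in the ``only if'' direction — the correct localized version of Proposition~\ref{p:norm in L_infty}, which as stated controls $\esssup_x\Vert\bar n(x)\Vert_{L_1}$ by $\Vert N:L_\infty\to L_\infty\Vert$ but which I need in a form that controls $\esssup$ over a cube by the norm of $N$ restricted to functions supported in a slightly larger cube. This localization is routine (the extremal $u$ constructed in the proof of Proposition~\ref{p:norm in L_infty} can be taken supported near the chosen density point, hence in a bounded set up to the $\beta$-tail), but it must be carried out carefully. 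The passage from local uniform continuity of $\bar n_r$ to continuity of the limit, and the final redefinition of $n$ off a null set, are then verbatim repetitions of the corresponding parts of the proof of Theorem~\ref{t:continuity of bar n:C_u}.
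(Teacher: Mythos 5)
Your proposal is sound, and the two directions should be assessed separately. Your ``if'' direction is essentially the paper's own argument: bound $\Vert S_hNS_{-h}-N:\,L_\infty^\alpha\to L_\infty\Vert$ by $\esssup_x\Vert\bar n(x-h)-\bar n(x)\Vert_{L_1}$ after discarding a $\beta$-tail, note that only $x$ in a bounded enlargement of $[-\alpha,\alpha]^c$ matters, and use uniform continuity of $\bar n$ on compact sets (the paper splits on $x\in[-\gamma,\gamma]^c$ versus its complement rather than on $|x-y|\le\rho$, but this is the same estimate). Your ``only if'' direction, however, takes a genuinely different route. The paper does \emph{not} localize Proposition~\ref{p:norm in L_infty}; instead it truncates the kernel in the $y$-variable, observes that each truncated operator $N_\alpha$ lies in $\mathoo C_u$, invokes Theorem~\ref{t:continuity of bar n:C_u} as a black box to get a continuous representative of each truncation, and then glues these together via a telescoping decomposition into annular pieces $\bar n_{\alpha_{i+1}\setminus\alpha_i}$ whose $L_1$-norms are summable locally, so that the series of continuous functions converges locally uniformly to a continuous limit. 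You instead reopen the proof of Proposition~\ref{p:norm in L_infty} and prove a localized version, $\esssup_{x\in[-\alpha,\alpha]^c}\Vert\bar n(x)\Vert_{L_1}\le C\Vert N|_{L_\infty^{\alpha+\rho}}\Vert+C'\int_{|z|>\rho}\beta(z)\,dz$, which is indeed available because the extremal test function in that proof can be cut off near the density point $x_0$ at the cost of the $\beta$-tail; feeding $S_hNS_{-h}-N$ into this gives local (essential) uniform $L_1$-continuity of $\bar n$ directly, and the mollification and redefinition machinery then goes through with ``uniform'' replaced by ``locally uniform'' throughout. What each approach buys: the paper's route reuses Theorem~\ref{t:continuity of bar n:C_u} unchanged but pays with the annular gluing argument; yours avoids the gluing entirely but requires stating and proving the localized norm estimate, which you correctly identify as the step needing care. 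Both are complete in outline; the only caveat is that your displayed inequality must carry the additive tail term $\int_{|z|>\rho}\beta$ explicitly (as you acknowledge in the text), since the left-hand side involves the full $L_1$-norm in $y$ while the right-hand side only tests against compactly supported~$u$.
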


 \begin{proof}
For any $\alpha\in\mathbb N$, we consider the operator
\begin{equation*}
\bigl(N_\alpha u\bigr)(x)=\int_{[-\alpha,\alpha]^c}n(x,x-y)\,u(y)\,dy=\int_{\mathbb
R^c}n(x,x-y)\,\chi_{[-\alpha,\alpha]^c}(y)u(y)\,dy,
\end{equation*}
where $\chi_{[-\alpha,\alpha]^c}$ is the characteristic function of the set
$[-\alpha,\alpha]^c$. Since $N_\alpha$ coincides with $N$ on $L_p^\alpha$, we have
\begin{align*}
\Vert S_hN_\alpha S_{-h}-N_\alpha:\,L_p\to L_p\Vert&=\Vert S_hN_\alpha
S_{-h}-N_\alpha:\,L_p^\alpha\to L_p\Vert\\
&=\Vert S_hNS_{-h}-N:\,L_p^\alpha\to L_p\Vert,
\end{align*}
which together with $N\in\mathoo C$ implies that $N_\alpha\in\mathoo C_u$. Therefore, by
Theorem~\ref{t:continuity of bar n:C_u}, the restriction
\begin{equation*}
\bar n_\alpha(x)(y)=n(x,x-y)\,\chi_{[-\alpha,\alpha]^c}(y)
\end{equation*}
of the function $\bar n$ coincides with a continuous function almost everywhere.
By~\eqref{e:est via beta}, for almost all $x$ we have the estimate
\begin{equation*}
\Vert\bar n_\alpha(x)\Vert_{L_1}\le\int_{[-\alpha,\alpha]^c}\beta(x-y)\,dy
=\int_{x-[-\alpha,\alpha]^c}\beta(y)\,dy=\int_{x+[-\alpha,\alpha]^c}\beta(y)\,dy.
\end{equation*}

Let us take a sequence $\alpha_i\in\mathbb N$ such that $\alpha_{i+1}-\alpha_i>2$ for all
$i$. We set
\begin{equation*}
\bar n_{\alpha_{i+1}\setminus\alpha_i}(x)(y)=n(x,x-y)\,
\chi_{[-\alpha_{i+1},\alpha_{i+1}]^c\setminus[-\alpha_{i},\alpha_{i}]^c}(y).
\end{equation*}
Clearly, $\bar n_{\alpha_{i+1}\setminus\alpha_i}=\bar n_{\alpha_{i+1}}-\bar
n_{\alpha_i}$. Obviously, the function $\bar n_{\alpha_{i+1}\setminus\alpha_i}$ coincides
with a continuous one almost everywhere. We replace the functions $\bar
n_{\alpha_{i+1}\setminus\alpha_i}$ by the corresponding continuous functions.
From~\eqref{e:est via beta} for almost all $x$, it follows the estimate
\begin{align*}
\Vert\bar
n_{\alpha_{i+1}\setminus\alpha_i}(x)\Vert_{L_1}&\le\int_{[-\alpha_{i+1},\alpha_{i+1}]^c\setminus[-\alpha_{i},\alpha_{i}]^c}\beta(x-y)\,dy\\
&=\int_{x+[-\alpha_{i+1},\alpha_{i+1}]^c\setminus[-\alpha_{i},\alpha_{i}]^c}\beta(y)\,dy.
\end{align*}

Next we take an arbitrary $x_0\in\mathbb R^c$ and show that the function $\bar n$ can be
redefined on a set of measure zero so that it becomes continuous on $x_0+(-1,1)^c$. We
note that $x\in x_0+(-1,1)^c$ and $y\in
x+[-\alpha_{i+1},\alpha_{i+1}]^c\setminus[-\alpha_{i},\alpha_{i}]^c$ imply that $y\in
x_0+[-\alpha_{i+1}-1,\alpha_{i+1}+1]^c\setminus[-\alpha_{i}+1,\alpha_{i}-1]^c$. Therefore
for $x_0+(-1,1)^c$ we have
\begin{align*}
\Vert\bar n_{\alpha_{i+1}\setminus\alpha_i}(x)\Vert_{L_1}
&\le\int_{x+[-\alpha_{i+1},\alpha_{i+1}]^c\setminus[-\alpha_{i},\alpha_{i}]^c}\beta(y)\,dy\\
&\le\int_{x_0+[-\alpha_{i+1}-1,\alpha_{i+1}+1]^c\setminus[-\alpha_{i}+1,\alpha_{i}-1]^c}\beta(y)\,dy.
\end{align*}
Clearly,
\begin{equation*}
\sum_{i=1}^\infty\int_{x_0+[-\alpha_{i+1}-1,\alpha_{i+1}+1]^c\setminus[-\alpha_{i}+1,\alpha_{i}-1]^c}\beta(y)\,dy
\le2\int_{\mathbb R^c}\beta(y)\,dy<\infty.
\end{equation*}
Hence the series $\sum_{i=1}^\infty\bar n_{\alpha_{i+1}\setminus\alpha_i}$ (consisting of
continuous functions) converges uniformly on $x_0+(-1,1)^c$. Therefore, its sum is a
continuous function. Obviously, its sum coincides with $\bar n$ on $x_0+(-1,1)^c$ almost
everywhere.

Since $x_0$ is arbitrary, $\bar n$ coincides with a continuous function~$\bar n_*$ on a
set of full measure. Now the proof of the possibility of a redefinition of $n$ repeats
the corresponding part of the proof of Theorem~\ref{t:continuity of bar n:C_u}.

Let us prove the converse statement. By Propositions~\ref{p:S_hNS_-h} and
estimate~\eqref{e:norm of N}, for any $\alpha\in\mathbb N$ we have
\begin{equation*}
\Vert S_hNS_{-h}-N:\,L_\infty^\alpha\to L_\infty\Vert\le\esssup_{x\in\mathbb
R^c}\int_{[-\alpha,\alpha]^c}\Vert n(x-h,x-y)-n(x,x-y)\Vert\,dy.
\end{equation*}
We take a large $\gamma\in\mathbb N$. For $x\notin[-\gamma,\gamma]^c$, we have the
estimate
\begin{multline*}
\int_{[-\alpha,\alpha]^c}\Vert n(x-h,x-y)-n(x,x-y)\Vert\,dy\\
\le\int_{[-\alpha,\alpha]^c}\Vert n(x-h,x-y)\Vert\,dy+\int_{[-\alpha,\alpha]^c}\Vert
n(x,x-y)\Vert\,dy\\
\le2\int_{[-\alpha,\alpha]^c}\beta(x-y)\,dy =2\int_{x+[-\alpha,\alpha]^c}\beta(y)\,dy,
\end{multline*}
which is small provided $\gamma$ is large enough. For $x\in[-\gamma,\gamma]^c$, we have
the estimate
\begin{equation*}
\int_{[-\alpha,\alpha]^c}\Vert n(x-h,x-y)-n(x,x-y)\Vert\,dy\le\int_{\mathbb R^c}\Vert
n(x-h,x-y)-n(x,x-y)\Vert\,dy,
\end{equation*}
which is small provided $h$ is small, by continuity of $\bar n$.
 \end{proof}

 \begin{theorem}\label{t:fin}
Let $n\in\mathoo C\mathoo N_1$, and the operator $N\in\mathoo B(L_p)$, $1\le p\le\infty$,
be defined by formula~\eqref{e:operator N}. If the operator $\mathbf1+N$ is invertible,
then $(\mathbf1+N)^{-1}=\mathbf1+M$, where
\begin{equation}\label{e:operator M}
\bigl(Mu\bigr)(x)=\int_{\mathbb R^c}m(x,x-y)\,u(y)\,dy
\end{equation}
with $m\in\mathoo C\mathoo N_1$.
 \end{theorem}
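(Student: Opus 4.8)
The plan is to combine the two principal ingredients the introduction advertises: the $\mathoo N_1$-inverse-closedness theorem (Theorem~\ref{t:5.4.7}) and the $\mathoo C$-inverse-closedness theorem (Theorem~\ref{t:5.6.3}), using the dictionary between kernels in $\mathoo C\mathoo N_1$ and operators in $\mathoo C(L_\infty)$ established in Theorem~\ref{t:continuity of bar n:C}. First I would observe that it suffices to treat $p=\infty$: if $\mathbf1+N$ is invertible in $L_p$ for one $p$, then by Corollary~\ref{c:5.4.8} it is invertible in $L_q$ for every $q$, and the kernel $m$ of $M$, where $(\mathbf1+N)^{-1}=\mathbf1+M$, is the same for all $q$; so the conclusion $m\in\mathoo C\mathoo N_1$ (a statement about the kernel alone) does not depend on $p$, and we may as well work in $L_\infty$.

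So assume $p=\infty$ and $n\in\mathoo C\mathoo N_1$. By definition of $\mathoo C\mathoo N_1$ and Theorem~\ref{t:continuity of bar n:C}, the operator $N$ belongs to $\mathoo C(L_\infty)$; it also lies in $\mathoo N_1(L_\infty)$. Since $\mathbf1\in\mathoo C(L_\infty)$ trivially (it commutes with every $S_h$ and obviously preserves the support conditions defining $\mathoo t_f$), we get $\mathbf1+N\in\mathoo C(L_\infty)$. Now apply Theorem~\ref{t:5.6.3}: since $\mathbf1+N$ is invertible, $(\mathbf1+N)^{-1}\in\mathoo C(L_\infty)$. Separately, apply Theorem~\ref{t:5.4.7} (with $L_p=L_\infty$): since $N\in\mathoo N_1(L_\infty)$ and $\mathbf1+N$ is invertible, $(\mathbf1+N)^{-1}=\mathbf1+M$ with $M\in\mathoo N_1(L_\infty)$, i.e.\ $M$ is of the form~\eqref{e:operator M} with a kernel $m\in\mathoo N_1(\mathbb R^c,\mathbb E)$ satisfying estimate~\eqref{e:est via beta} for some $\beta\in\mathscr L_1$.

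It remains to upgrade $m\in\mathoo N_1$ to $m\in\mathoo C\mathoo N_1$. The point is that $M=(\mathbf1+N)^{-1}-\mathbf1$ is simultaneously in $\mathoo N_1(L_\infty)$ and in $\mathoo C(L_\infty)$ (the latter because $(\mathbf1+N)^{-1}\in\mathoo C$ and $\mathbf1\in\mathoo C$, so their difference is in $\mathoo C$). Now invoke Theorem~\ref{t:continuity of bar n:C} in the direction ``$N\in\mathoo C(L_\infty)\Rightarrow n\in\mathoo C\mathoo N_1$'', applied to the operator $M$ with kernel $m$: it yields $m\in\mathoo C\mathoo N_1$. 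This closes the argument.

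The main obstacle — and the only genuinely non-formal point — is checking that the identity operator $\mathbf1$ lies in $\mathoo C(L_\infty)$, and more precisely that $\mathoo C(L_\infty)$ is closed under addition so that $\mathbf1+N\in\mathoo C$ and $(\mathbf1+N)^{-1}-\mathbf1\in\mathoo C$. For $\mathbf1$: it obviously satisfies $S_h\mathbf1 S_{-h}=\mathbf1$ for all $h$, so the restriction of~\eqref{e:S_hTS_-h} to any $L_p^\alpha$ is constant, hence continuous; and $\mathbf1\in\mathoo t(L_\infty)$ since $\mathbf1 L_p^\alpha\subseteq L_p^\alpha$ and $\mathbf1 L_p^{\setminus\alpha}\subseteq L_p^{\setminus\alpha}$, so $\mathbf1\in\mathoo t_f\subseteq\mathoo t$. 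That $\mathoo C(L_\infty)$ is a linear subspace is immediate from its definition (the defining conditions — membership in the linear space $\mathoo t$ and norm-continuity of $h\mapsto S_h T S_{-h}|_{L_p^\alpha}$ — are preserved under finite linear combinations). Everything else is a direct citation of Theorems~\ref{t:5.4.7}, \ref{t:5.6.3}, \ref{t:continuity of bar n:C} and Corollary~\ref{c:5.4.8}, so no further calculation is needed.
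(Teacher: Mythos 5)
Your proof is correct and follows exactly the same route as the paper: reduce to $L_\infty$ via Corollary~\ref{c:5.4.8}, get $M\in\mathoo N_1(L_\infty)$ from Theorem~\ref{t:5.4.7}, get $(\mathbf1+N)^{-1}\in\mathoo C(L_\infty)$ from Theorem~\ref{t:5.6.3}, and conclude $m\in\mathoo C\mathoo N_1$ from Theorem~\ref{t:continuity of bar n:C}. The only difference is that you spell out the routine verifications (that $\mathbf1\in\mathoo C(L_\infty)$ and that $\mathoo C$ is a linear subspace) which the paper leaves implicit.
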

 \begin{proof}
Let the operator $\mathbf1+N:\,L_p\to L_p$ be invertible. Then, by
Corollary~\ref{c:5.4.8}, it is invertible in $L_\infty$, and by Theorem~\ref{t:5.4.7},
$(\mathbf1+N)^{-1}=\mathbf1+M$, where $M\in\mathoo N_1(L_\infty)$. On the other hand, by
Theorem~\ref{t:5.6.3}, $(\mathbf1+N)^{-1}\in\mathoo C(L_\infty)$. Therefore by
Theorem~\ref{t:continuity of bar n:C}, $m\in\mathoo C\mathoo N_1$.
 \end{proof}

\section{The class $\mathoo h$}\label{e:h}
For every $k\in\mathbb Z^c$, we consider the operator
\begin{align*}
\bigl(P_ku\bigr)(x)=\chi_{k+(0,1]^c}(x)u(x),
\end{align*}
where $\chi_{k+(0,1]^c}$ is the characteristic function of the set
$k+(0,1]^c\subset\mathbb R^c$. We call (see~\cite[Proposition 6.1.1]{Kurbatov99}) an
operator $K\in\mathoo t(L_p)$, $1\le p\le\infty$, \emph{locally compact} if for all
$k,m\in\mathbb Z^c$, the operator $P_mTP_k$ is compact. We denote the set of all locally
compact operators $K\in\mathoo t(L_p)$ by $\mathoo h(L_p)$. Clearly, the class $\mathoo
h(L_p)$ is closed in norm.

 \begin{theorem}\label{t:CN<k}
Let $1\le p\le\infty$. Then the class $\mathoo C\mathoo N_1(L_p)$ is included into
$\mathoo h(L_p)$.
 \end{theorem}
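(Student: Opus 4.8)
The plan is to show that for $n\in\mathoo C\mathoo N_1(L_p)$ the associated operator $N$ is locally compact, i.e.\ each $P_mNP_k$ is a compact operator on $L_p$. Since $\mathoo C\mathoo N_1\subseteq\mathoo N_1\subseteq\mathoo t(L_p)$ by Proposition~\ref{p:C in t}, it suffices to verify the compactness of the ``cut-off'' pieces. First I would write out what $P_mNP_k$ does explicitly: for $u\in L_p$ it produces the function
$x\mapsto\chi_{m+(0,1]^c}(x)\int_{k+(0,1]^c}n(x,x-y)\,u(y)\,dy$,
so it is an integral operator on the compact cube $k+(0,1]^c$ (domain side) with values supported on $m+(0,1]^c$, and with operator-valued kernel $(x,y)\mapsto\chi_{m+(0,1]^c}(x)\,n(x,x-y)\,\chi_{k+(0,1]^c}(y)$, which is bounded in norm by $\beta(x-y)$ on a set of finite measure.

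The key step is to exploit the $L_1$-continuity of $\bar n$. By hypothesis the function $x\mapsto\bar n(x)$, $\bar n(x)(y)=n(x,x-y)$, is continuous from $\mathbb R^c$ into $L_1(\mathbb R^c,\mathoo B(\mathbb E))$, hence uniformly continuous on the compact cube $m+(0,1]^c$. This gives equicontinuity of translates in the following sense: $\sup_{x\in m+(0,1]^c}\Vert\bar n(x-h)-\bar n(x)\Vert_{L_1}\to0$ as $h\to0$. I would use this to show that $P_mNP_k$ maps the unit ball of $L_p$ into a relatively compact set. The standard route is to approximate $N$ in operator norm by operators with better kernels and invoke that the class $\mathoo h(L_p)$ is closed in norm (stated in the excerpt). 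Concretely, applying the smoothing $\bar n_r(x)=\mu(B(0,r))^{-1}\int_{B(0,r)}\bar n(x-y)\,dy$ (continuous everywhere, and by the estimate in the proof of Theorem~\ref{t:continuity of bar n:C_u} converging to $\bar n$ uniformly on compact sets when $n\in\mathoo C_u\mathoo N_1$, hence in the $\mathoo C\mathoo N_1$ case after the cut-off $N_\alpha$ which lies in $\mathoo C_u$), one gets integral operators $N^{(r)}$ with kernels continuous on the relevant compact cubes and still dominated by (a translate of) $\beta$; these $P_mN^{(r)}P_k$ are compact by the classical compactness criterion for integral operators with continuous kernels on sets of finite measure (for $1\le p<\infty$; for $p=\infty$ one uses Arzel\`a–Ascoli directly, the images being equicontinuous and uniformly bounded). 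Then $P_mN^{(r)}P_k\to P_mNP_k$ in norm as $r\to0$ because $\Vert P_m(N^{(r)}-N)P_k\Vert\le\esssup_x\Vert\bar n_r(x)-\bar n(x)\Vert_{L_1}\to0$ on the compact cube, so $P_mNP_k$ is compact as a norm limit of compact operators.

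The main obstacle is the case $p=\infty$: here the target space $L_\infty$ is not separable and the usual Lebesgue-domination compactness arguments for integral operators are less automatic, so one must argue via equicontinuity. The functions $(P_mNu)(x)=\chi_{m+(0,1]^c}(x)\int_{k+(0,1]^c}n(x,x-y)u(y)\,dy$ for $\Vert u\Vert_\infty\le1$ satisfy, for $x,x'\in m+(0,1]^c$,
$|(P_mNu)(x)-(P_mNu)(x')|\le\int_{\mathbb R^c}\Vert n(x,x-y)-n(x',x'-y)\Vert\,dy=\Vert\bar n(x)-\bar n(x')\Vert_{L_1}$,
which tends to $0$ as $x'\to x$ uniformly in $u$ by the $L_1$-continuity of $\bar n$; together with the uniform bound $\Vert\beta\Vert_{L_1}$, Arzel\`a–Ascoli on the compact cube $m+(0,1]^c$ gives relative compactness of the image in $C(m+(0,1]^c)$, hence in $L_\infty$. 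For $1\le p<\infty$ I would either run the $N^{(r)}$-approximation above or observe that $P_mNP_k$ factors through the embedding $L_p\hookrightarrow$ (space of $L_1$-kernel-smeared functions) and use the domination $\beta(x-y)\in L_1$ of a finite cube to apply the Kolmogorov–Riesz compactness criterion, the equicontinuity of translates again coming from uniform continuity of $x\mapsto\bar n(x)$. Finally, invoking norm-closedness of $\mathoo h(L_p)$ and $\mathoo C\mathoo N_1(L_p)\subseteq\mathoo N_1(L_p)\subseteq\mathoo t(L_p)$ completes the inclusion $\mathoo C\mathoo N_1(L_p)\subseteq\mathoo h(L_p)$.
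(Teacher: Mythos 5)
Your overall strategy---reduce to compactness of $P_mNP_k$, approximate $N$ in operator norm by operators that are visibly locally compact, and invoke the norm-closedness of $\mathoo h(L_p)$---is the same as the paper's, and your $p=\infty$ argument is correct and complete: the bound $|(NP_ku)(x)-(NP_ku)(x')|\le\Vert\bar n(x)-\bar n(x')\Vert_{L_1}$ for $\Vert u\Vert_\infty\le1$ gives equicontinuity on the closed cube, and Arzel\`a--Ascoli yields relative compactness in $C$, hence in $L_\infty$. That part is, if anything, more direct than the paper's.

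The gap is in the case $1\le p<\infty$. You assert that the smoothed operators $N^{(r)}$ built from $\bar n_r(x)=\mu\bigl(B(0,r)\bigr)^{-1}\int_{B(0,r)}\bar n(x-z)\,dz$ have ``kernels continuous on the relevant compact cubes'' and then invoke the classical compactness criterion for integral operators with continuous kernels. But the averaging acts only on the variable $x$ of the $L_1$-valued map $x\mapsto\bar n(x)$: for each fixed $x$, $\bar n_r(x)$ is still merely an $L_1$ function of $y$ (an average of $L_1$ functions), so the two-variable kernel $(x,y)\mapsto\bar n_r(x)(y)$ need have no continuity, nor even local boundedness, in $y$. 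The classical criterion therefore does not apply to $P_mN^{(r)}P_k$. (The $N^{(r)}$ \emph{are} locally compact, but for a different reason: $N^{(r)}=T_rN$ where $T_r$ is convolution with $\mu\bigl(B(0,r)\bigr)^{-1}\chi_{B(0,r)}\in L_1$, so one still needs the separate fact that convolutions with summable kernels are locally compact.) Your Kolmogorov--Riesz fallback has a parallel problem: the quantity $\sup_x\Vert\bar n(x+h)-\bar n(x)\Vert_{L_1}\to0$ controls the translation error in the $L_\infty\to L_\infty$ norm, and Schur-type interpolation then gives smallness in $L_p\to L_p$ only for $p>1$; at $p=1$ one needs $\sup_y\int_x\Vert\bar n(x+h)(y)-\bar n(x)(y)\Vert\,dx\to0$, which does not follow from $L_1$-continuity of $\bar n$. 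The paper sidesteps all of this by a different approximation: after cutting $\bar n$ down to compact support in $x$, it freezes $\bar n(x)$ to $\bar n(x^*)$ on a lattice of mesh $1/i$, so that on each small cube the operator is literally a cut-off of a convolution with a summable kernel; such convolutions are locally compact (there the approximation by genuinely jointly continuous compactly supported kernels is legitimate), and uniform continuity of $\bar n$ gives $N_i\to N$ in norm. Replacing your $N^{(r)}$ step by this piecewise freezing, or by the factorization $N^{(r)}=T_rN$ together with local compactness of convolution operators, would close the gap.
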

 \begin{proof}
It is known (see, e.g.~\cite[Proposition 6.2.2]{Kurbatov99}) that an integral operator in
$L_p[a,b]$ with a continuous kernel $k(\cdot,\cdot)$ is locally compact. Consequently, an
operator of convolution with a continuous compactly supported kernel is locally compact.
By virtue of estimate~\eqref{e:norm of N}, an operator of convolution with a summable
kernel is also locally compact.

Let an operator $N\in\mathoo C\mathoo N_1(L_p)$ has the form~\eqref{e:operator N}. Since
we want to prove the compactness of the operator $P_mTP_k$, without loss of generality we
may assume that the functions $\beta$ and $\bar n$ are compactly supported. More
precisely, we may assume that $\bar n$ is supported in $[m-1,m+2]^c$ and $\beta$ is
supported in $[m-k-1,m-k+2]^c$; moreover, the function $\bar n$ is $L_1$-continuous.

For any $i\in\mathbb N$, we consider the function
\begin{equation*}
\bar n_i(x)=\bar n(x^*),
\end{equation*}
where $x^*=(x_1^*,\dots,x_c^*)$ is the nearest to $x=(x_1,\dots,x_c)$ from the right
$\bigl(\frac1i\bigr)^c$-integer point in the sense that $0\le x_k^*-x_k<\frac1i$ and
$x^*\in\mathbb Z^c/i$. We consider the integral operators $N_i\in\mathoo N_1$ generated
by $\bar n_i$. Since $\bar n$ is continuous and compactly supported, $N_i$ converges to
$N$ in norm by estimate~\eqref{e:norm of N}.

Clearly, any operator $N_i$ can be represented as a finite sum of the operators
$P_{k,i}N_i$, $k\in\mathbb Z^c$, where
\begin{equation*}
\bigl(P_{k,i}u\bigr)(x)=\chi_{k/i+(0,1/i]^c}(x)u(x).
\end{equation*}
By what was proved, the operators $P_{k,i}N_i$ are locally compact. Hence the operators
$N_i$ and the operator $N$ are locally compact as well.
 \end{proof}

\section{The class $\mathoo C\mathoo S$}\label{s:D+N}
Let $X$ be a Banach space. We denote by $C=C(\mathbb R^c,X)$ the Banach space of all
bounded continuous functions $u:\,\mathbb R^c\to X$ with the norm
\begin{equation*}
\Vert u\Vert=\sup_{x\in\mathbb R^c}\Vert u(x)\Vert.
\end{equation*}

We denote by $\mathoo C\mathoo S=\mathoo C\mathoo S(L_p)$ the set of all operators of the
form
\begin{equation}\label{e:operator D}
\bigl(Du\bigr)(x)=\sum_{i=1}^\infty d_i(x)u(x-h_i),
\end{equation}
where $h_i\in\mathbb R^c$, $d_i\in C\bigl(\mathbb R^c,\mathoo B(\mathbb E)\bigr)$,
$\sum_{i=1}^\infty\Vert d_i\Vert_{C}<\infty$. Clearly, $D$ acts in $L_p$, $1\le
p\le\infty$, and in $C(\mathbb R^c,\mathbb E)$, and in all cases $\Vert
D\Vert\le\sum_{i=1}^\infty\Vert d_i\Vert$.

 \begin{theorem}[{\rm\cite[Corollary~5.6.10]{Kurbatov99}}\bf]\label{t:5.6.10}
If an operator $D\in\mathoo C\mathoo S$ is invertible in $L_p$, $1\le p\le\infty$, then
$D^{-1}\in\mathoo C\mathoo S$ as well. If $D\in\mathoo C\mathoo S$ is invertible in $L_p$
for some $1\le p\le\infty$, then it is invertible in $L_p$ for all $1\le p\le\infty$.
 \end{theorem}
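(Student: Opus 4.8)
The plan is to recognize $\mathoo C\mathoo S(L_p)$ as a unital Banach algebra in its own right and to deduce \emph{both} assertions from the single fact that this algebra is inverse-closed in $\mathoo B(L_p)$.

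First I would put the representation~\eqref{e:operator D} into normal form: collecting the coefficients attached to equal shifts, every $D\in\mathoo C\mathoo S$ is $D=\sum_g M_{d_g}S_g$, the sum over a countable set of \emph{distinct} $g\in\mathbb R^c$, with $M_d$ the operator $(M_du)(x)=d(x)u(x)$, $d_g\in C(\mathbb R^c,\mathoo B(\mathbb E))$, and $\sum_g\Vert d_g\Vert_C\le\sum_i\Vert d_i\Vert_C<\infty$. The family $(d_g)$ is uniquely determined by $D$: testing $D$ on functions $x\mapsto e^{i\langle\xi,x\rangle}\psi(x)$, $\psi$ a compactly supported bump, reduces the vanishing of $D$ to that of an absolutely convergent ``trigonometric'' series in $\xi$, so all $d_g\equiv0$ by uniqueness of Bohr--Fourier coefficients. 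Hence $\Vert D\Vert_\star:=\sum_g\Vert d_g\Vert_C$ is a genuine norm; the identity $S_gM_dS_{-g}=M_{d(\cdot-g)}S_g$ makes it submultiplicative and completeness is routine, so $\mathoo C\mathoo S$ is a unital Banach algebra mapping contractively and injectively into $\mathoo B(L_q)$ for every $q$ and into $\mathoo B\bigl(C(\mathbb R^c,\mathbb E)\bigr)$. Therefore, once $\mathoo C\mathoo S$ is known to be inverse-closed in $\mathoo B(L_p)$, invertibility of $D$ in $\mathoo B(L_p)$ forces invertibility of $D$ \emph{in the algebra}, and $D^{-1}\in\mathoo C\mathoo S$ is then a two-sided inverse of $D$ on every $L_q$ and on $C$; this gives simultaneously $D^{-1}\in\mathoo C\mathoo S$ and the $p$-independence of invertibility.

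It remains to prove that $\mathoo C\mathoo S$ is inverse-closed in $\mathoo B(L_p)$ for one fixed $p$, and this is where the real work lies. Via the normal form, $\mathoo C\mathoo S$ is exactly the twisted $\ell_1$-group algebra of the \emph{discrete} group $\mathbb R^c_{\mathrm d}=(\mathbb R^c,+)$ over the coefficient algebra $\mathoo B:=C(\mathbb R^c,\mathoo B(\mathbb E))$, the group acting on $\mathoo B$ by the isometric automorphisms $f\mapsto f(\cdot-g)$, implemented on $L_p$ by the isometric group $\{S_g\}$. The coefficient algebra $\mathoo B$ is itself inverse-closed in $\mathoo B(L_p)$ (for $1\le p<\infty$; $p=\infty$ needs only minor changes): an invertible $M_f$ has $f(x)$ invertible for every $x$ with $\sup_x\Vert f(x)^{-1}\Vert<\infty$ (from the lower bound on $M_f$ and continuity of $f$), so $f^{-1}\in C(\mathbb R^c,\mathoo B(\mathbb E))$, and $M_f^{-1}=M_{f^{-1}}$ because the commutant of all multiplication operators in $\mathoo B(L_p)$ consists of multiplication operators. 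Since $\mathbb R^c_{\mathrm d}$ is abelian, hence amenable, a Bochner--Phillips-type inverse-closedness theorem for twisted $\ell_1$-algebras over amenable discrete groups (in the spirit of~\cite{Bochner-Phillips}, and developed for such operator algebras in~\cite{Kurbatov99}) then gives that $\ell_1(\mathbb R^c_{\mathrm d},\mathoo B)$ is inverse-closed in $\mathoo B(L_p)$ as soon as $\mathoo B$ is. The substance of that theorem --- a Følner averaging argument turning an $L_p$-inverse of $\sum_g M_{d_g}S_g$ into the $\ell_1$-data of an inverse --- is the main obstacle, and it is the only step using amenability.

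Finally, the ``regularity half'' of the conclusion can be obtained cheaply and independently: truncating~\eqref{e:operator D} to finitely many shifts gives operators of class $\mathoo t_f(L_p)$ converging to $D$ in norm (since $\Vert D-D_N\Vert\le\sum_{i>N}\Vert d_i\Vert_C$), so $D\in\mathoo t(L_p)$; and for each $\alpha$ and $u\in L_p^\alpha$ we have $\Vert(S_hDS_{-h}-D)u\Vert_{L_p}\le\bigl(\sum_i\sup_{x\in[-\alpha,\alpha]^c+h_i}\Vert d_i(x-h)-d_i(x)\Vert\bigr)\Vert u\Vert_{L_p}$, and the series on the right tends to $0$ as $h\to0$ by uniform continuity of each $d_i$ on compacta and dominated convergence (with majorant $2\sum_i\Vert d_i\Vert_C$). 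Thus $D\in\mathoo C(L_p)$ and Theorem~\ref{t:5.6.3} already yields $D^{-1}\in\mathoo C(L_p)$; the genuinely harder point --- and the reason the theorem is not a formal corollary of Theorem~\ref{t:5.6.3} --- is to promote this to membership in the \emph{subalgebra} $\mathoo C\mathoo S$, i.e.\ to recover a summable weighted-shift representation of $D^{-1}$, which is precisely what the Bochner--Phillips/amenability argument supplies.
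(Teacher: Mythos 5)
Your proposal and the paper take, at bottom, the same route: the paper's entire proof is the one-line observation that $\mathoo C\mathoo S$ coincides with the class $\mathoo S(C)$ of \cite[5.1.1, 5.2.1]{Kurbatov99}, after which \cite[Corollary~5.6.10]{Kurbatov99} is imported wholesale; what you have written is a reconstruction of what the proof of that imported corollary looks like. Your reconstruction is architecturally sound and agrees with the standard argument: normal form with distinct shifts; uniqueness of the coefficients $d_g$ via Bohr--Fourier coefficients of the absolutely convergent trigonometric series obtained by testing on modulated bumps (note that $Du$ is continuous for such $u$, so ``$=0$ a.e.'' upgrades to ``$\equiv0$'' and the null-set-depending-on-$\xi$ issue disappears); the resulting unital Banach algebra with norm $\Vert D\Vert_\star=\sum_g\Vert d_g\Vert_C$, represented contractively and injectively in every $\mathoo B(L_q)$ and in $\mathoo B(C)$; inverse-closedness of the coefficient algebra of multiplications by elements of $C\bigl(\mathbb R^c,\mathoo B(\mathbb E)\bigr)$; and finally a noncommutative Wiener lemma for the twisted $\ell_1$-algebra over the discrete amenable group $\mathbb R^c_{\mathrm d}$. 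Your deduction of the $p$-independence of invertibility from invertibility in the abstract algebra is clean and is exactly how the second sentence of the theorem is obtained.

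The one substantive caveat is the one you name yourself: the Bochner--Phillips/amenability step --- that invertibility of $\sum_g M_{d_g}S_g$ in $\mathoo B(L_p)$ forces invertibility in $\ell_1\bigl(\mathbb R^c_{\mathrm d},C(\mathbb R^c,\mathoo B(\mathbb E))\bigr)$ --- is cited rather than proved, and it is the entire content of the theorem; everything else is bookkeeping. Since the paper does exactly the same (it cites \cite{Kurbatov99}), this is not a defect relative to the paper, but as a self-contained proof your text stops precisely where the difficulty begins. Two small slips: the intertwining identity should read $S_gM_d=M_{d(\cdot-g)}S_g$, equivalently $S_gM_dS_{-g}=M_{d(\cdot-g)}$, not $S_gM_dS_{-g}=M_{d(\cdot-g)}S_g$; and the appeal to ``the commutant of all multiplication operators'' is unnecessary (and delicate for operator-valued multipliers) --- once each $f(x)$ is invertible with $\sup_x\Vert f(x)^{-1}\Vert<\infty$, one verifies $M_{f^{-1}}M_f=M_fM_{f^{-1}}=\mathbf1$ directly. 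Your closing observation --- that $D\in\mathoo C(L_p)$, so Theorem~\ref{t:5.6.3} already yields $D^{-1}\in\mathoo C(L_p)$ but not the summable weighted-shift representation --- correctly locates why the statement is not a formal consequence of the other results proved in this paper.
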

 \begin{proof}
It is enough to observe that our class $\mathoo C\mathoo S$ coincides with the class
$\mathoo S(C)$ in notation of~\cite[see 5.2.1 and 5.1.1]{Kurbatov99}.
 \end{proof}

 \begin{proposition}\label{p:CS times CN_1}
Let $N\in\mathoo C\mathoo N_1(L_p)$ and $D\in\mathoo C\mathoo S(L_p)$, $1\le p\le\infty$.
Then $DN,ND\in\mathoo C\mathoo N_1(L_p)$.
 \end{proposition}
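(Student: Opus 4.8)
The plan is to show that the product of a difference operator $D\in\mathoo C\mathoo S$ with an integral operator $N\in\mathoo C\mathoo N_1$ is again an integral operator whose kernel lies in $\mathoo C\mathoo N_1$, and symmetrically for $ND$. First I would reduce to a single term $Du=d(\cdot)u(\cdot-h)$ with $d\in C(\mathbb R^c,\mathoo B(\mathbb E))$ and $h\in\mathbb R^c$: since $D=\sum_i d_i(\cdot)u(\cdot-h_i)$ with $\sum_i\Vert d_i\Vert_C<\infty$, the operators $D_iN$ (respectively $ND_i$) will form an absolutely convergent series in $\mathoo B(L_p)$ (using $\Vert N\Vert\le\Vert\beta\Vert_{L_1}$ from~\eqref{e:norm of N}), and $\mathoo C\mathoo N_1$ is stable under such sums because the corresponding kernels and their majorants add up absolutely in $\mathscr L_1$ and the continuous representatives $\bar n_i$ of $\bar n_i(\cdot)$ sum to a continuous limit by Proposition~\ref{p:abs ser} applied in $C\bigl(\mathbb R^c,L_1(\mathbb R^c,\mathoo B(\mathbb E))\bigr)$.

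For one term, a direct computation gives the kernels. For $DN$,
\begin{align*}
\bigl(DNu\bigr)(x)&=d(x)\bigl(Nu\bigr)(x-h)=d(x)\int_{\mathbb R^c}n(x-h,x-h-y)\,u(y)\,dy\\
&=\int_{\mathbb R^c}d(x)\,n(x-h,x-h-y)\,u(y)\,dy,
\end{align*}
so the kernel of $DN$ is $(x,y)\mapsto d(x)\,n(x-h,y-h)$, with associated $\overline{d\cdot n}(x)(y)=d(x)\,\bar n(x-h)(y)$. Likewise for $ND$,
\begin{align*}
\bigl(NDu\bigr)(x)&=\int_{\mathbb R^c}n(x,x-y)\,d(y)\,u(y-h)\,dy\\
&=\int_{\mathbb R^c}n(x,x+h-y)\,d(y-h)\,u(y)\,dy,
\end{align*}
so the kernel of $ND$ is $(x,y)\mapsto n(x,y+h)\,d(x-y-h)$. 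In both cases the $\mathoo N_1$-estimate is immediate: for $DN$ one has $\Vert d(x)\,n(x-h,y-h)\Vert\le\Vert d\Vert_C\,\beta(y-h)$, and $y\mapsto\Vert d\Vert_C\,\beta(y-h)$ is in $\mathscr L_1$; for $ND$ one has $\Vert n(x,y+h)\,d(x-y-h)\Vert\le\Vert d\Vert_C\,\beta(y+h)$, again an $\mathscr L_1$ majorant independent of $x$.

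It remains to verify the $\mathoo C\mathoo N_1$ continuity of the associated functions $x\mapsto\overline{(\cdot)}(x)$ in the $L_1$-norm. For $DN$ this is routine: $\overline{d\cdot n}(x)=d(x)\,\bar n(x-h)$ is the product of the $\mathoo B(\mathbb E)$-valued continuous function $d$ and the $L_1$-continuous function $x\mapsto\bar n(x-h)$ (continuity of $\bar n$ comes from $n\in\mathoo C\mathoo N_1$), and such products are continuous in $L_1$-norm since $\Vert d(x)v-d(x')v'\Vert_{L_1}\le\Vert d(x)\Vert\,\Vert v-v'\Vert_{L_1}+\Vert d(x)-d(x')\Vert\,\Vert v'\Vert_{L_1}$ with $\Vert v'\Vert_{L_1}\le\Vert\beta\Vert_{L_1}$. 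The main obstacle is the $ND$ case, where the kernel involves $d(x-y-h)$ with the argument depending on the integration variable $y$, so the associated function is $x\mapsto\bigl(y\mapsto n(x,y+h)\,d(x-y-h)\bigr)$; to control its $L_1$-modulus of continuity in $x$ I would split the increment as $n(x,y+h)\,d(x-y-h)-n(x',y+h)\,d(x'-y-h)$ into a term $\bigl(n(x,y+h)-n(x',y+h)\bigr)d(x-y-h)$, bounded in $L_1$ by $\Vert d\Vert_C\,\Vert\bar n(x)-\bar n(x')\Vert_{L_1}$ after the change of variable shifting $n(x,\cdot)$ back to $\bar n(x)$, and a term $n(x',y+h)\bigl(d(x-y-h)-d(x'-y-h)\bigr)$, which I would handle by dominated convergence: the integrand is dominated by $2\Vert d\Vert_C\,\beta(y+h)\in\mathscr L_1$ and, for each fixed $y$, $d(x-y-h)\to d(x'-y-h)$ as $x\to x'$ by continuity of $d$, so by Proposition~\ref{p:Lebesgue} the $L_1$-norm of this term tends to $0$. (For the second term one uses the elementary fact that translation is continuous in $\mathscr L_1$ together with the majorant, or equivalently the uniform continuity of $d$ on the compact support of $\beta$ after the usual reduction to compactly supported $\beta$ afforded by $\mathoo C\mathoo N_1\subset\mathoo t$.) Assembling these estimates over the absolutely convergent series and invoking the sum stability of $\mathoo C\mathoo N_1$ noted above completes the proof.
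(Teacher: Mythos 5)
Your proposal is correct and follows essentially the same route as the paper: compute the composed kernel explicitly, majorize it by $\sum_i\Vert d_i\Vert_C\,\beta(\cdot\mp h_i)$, and obtain the $L_1$-continuity of the associated function from uniform convergence of a series of continuous $L_1$-valued functions; your term-by-term reduction plus summation is just a reorganization of the paper's treatment of the whole series at once. Two cosmetic points: your change of variables for $ND$ flips the sign of $h$ (the kernel should be $n(x,x-y-h)\,d(y+h)$), and the $ND$ continuity check is easier than you make it, since in the form $\bar m(x)(y)=m(x,x-y)$ required by the definition of $\mathoo C\mathoo N_1$ the factor coming from $d$ has argument $y+h$, independent of $x$, so no dominated-convergence step is needed.
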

 \begin{proof}
Let an operator $N\in\mathoo C\mathoo N_1(L_p)$ has the form~\eqref{e:operator N}, and an
operator $D\in\mathoo C\mathoo S(L_p)$ has the form~\eqref{e:operator D}. By the
definition of composition of operators, we have
\begin{equation}\label{e:DN}
\bigl(DNu\bigr)(x)=\sum_{i=1}^\infty d_i(x)\int_{\mathbb R^c}n(x-h_i,x-y-h_i)\,u(y)\,dy.
\end{equation}

We consider the operators
\begin{align*}
\bigl(N_1u\bigr)(x)&=\int_{\mathbb R^c}\Vert n(x,x-y)\Vert\,u(y)\,dy,\\
\bigl(D_1u\bigr)(x)&=\sum_{i=1}^\infty\Vert d_i(x)\Vert u(x-h_i)
\end{align*}
acting in $L_p\bigl(\mathbb R^c,\mathbb R\bigr)$.
%These operators majorize the operators $N$ and $D$.
In the formula
\begin{equation}\label{e:D1N1}
\bigl(D_1N_1|u|\bigr)(x)=\sum_{i=1}^\infty\Vert d_i(x)\Vert\int_{\mathbb R^c}\Vert
n(x-h_i,x-y-h_i)\Vert\cdot|u(y)|\,dy,
\end{equation}
each of the functions
\begin{equation*}
v_i(x)=\int_{\mathbb R^c}\Vert n(x-h_i,x-y-h_i)\Vert\cdot|u(y)|\,dy,
\end{equation*}
by Proposition~\ref{p:5.4.3}, is defined almost everywhere and belongs to $\mathscr L_p$;
furthermore,
\begin{equation*}
\Vert v_i\Vert_{L_p}\le\Vert\beta\Vert_{L_1}\cdot\Vert u\Vert_{L_p}.
\end{equation*}

Since $\sum_{i=1}^\infty\Vert d_i\Vert_{C}<\infty$, the series $\sum_{i=1}^\infty w_i$,
where
\begin{equation*}
w_i(x)=\Vert d_i(x)\Vert\cdot v_i(x)=\Vert d_i(x)\Vert\int_{\mathbb R^c}\Vert
n(x-h_i,x-y-h_i)\Vert\cdot|u(y)|\,dy,
\end{equation*}
converges absolutely in $L_p$-norm. We denote by $F$ the set of all $x$ such that the
series $\sum_{i=1}^\infty w_i(x)$ converges (absolutely). By Proposition~\ref{p:abs ser},
$F$ is a set of full measure. Thus for $x\in F$, formulas~\eqref{e:D1N1} and~\eqref{e:DN}
can be rewritten as
\begin{equation}\label{e:DN:3}
 \begin{split}
\bigl(D_1N_1|u|\bigr)(x)&=\int_{\mathbb R^c}\sum_{i=1}^\infty\Vert d_i(x)\Vert\cdot\Vert
n(x-h_i,x-y-h_i)\Vert\cdot|u(y)|\,dy,\\
\bigl(DNu\bigr)(x)&=\int_{\mathbb R^c}\sum_{i=1}^\infty
d_i(x)\,n(x-h_i,x-y-h_i)\,u(y)\,dy
 \end{split}
\end{equation}
The later representation is obviously equivalent to
\begin{equation*}
\bigl(DNu\bigr)(x)=\int_{\mathbb R^c}\Bigl(\sum_{i=1}^\infty
d_i(x)\,n(x-h_i,x-y-h_i)\Bigr)\,u(y)\,dy.
\end{equation*}
Thus $DN$ is an integral operator with the kernel
\begin{equation*}
n_1(x,y)=\sum_{i=1}^\infty d_i(x)n(x-h_i,y-h_i).
\end{equation*}
For almost all $(x,y)$, this kernel satisfies the estimate
\begin{equation*}
\Vert n_1(x,y)\Vert\le\sum_{i=1}^\infty \Vert
d_i(x)\Vert\,\beta(y-h_i)\le\sum_{i=1}^\infty \Vert d_i\Vert_{L_\infty}\,\beta(y-h_i).
\end{equation*}
Clearly, $\beta_1(y)=\sum_{i=1}^\infty \Vert d_i\Vert_{L_\infty}\,\beta(y-h_i)$ is a
summable function. Thus $n_1\in\mathoo N_1$. It remains to note that the series
\begin{equation*}
\bar n_1(x,\cdot)=\sum_{i=1}^\infty d_i(x)\,\bar n(x-h_i,\cdot-h_i)
\end{equation*}
converges to a continuous function, because it consists of continuous functions and
converges uniformly.

Next we discuss the composition $ND$. By the definition of composition of operators, we
have
\begin{equation*}%\label{e:ND}
\bigl(NDu\bigr)(x)=\int_{\mathbb R^c}n(x,x-y)\,\Bigl(\sum_{i=1}^\infty
d_i(y)u(y-h_i)\Bigr)\,dy.
\end{equation*}

Let us consider the function
$$w(y)\mapsto\sum_{i=1}^\infty \Vert
d_i(y)\Vert\cdot|u(y-h_i)|.$$ Clearly, this series converges absolutely in $L_p$-norm.
Hence by Proposition~\ref{p:abs ser} it converges absolutely almost everywhere (say, on
$F_1$) to the function $w$. Let $F_2$ be the set of all $x$ such that $\int_{\mathbb
R^c}\Vert n(x,x-y)\Vert\,w(y)\,dy<\infty$. By Proposition~\ref{p:5.4.3}, $F_2$ is a set
of full measure.

Let $x\in F_2$ be fixed. Since $n\in\mathoo C\mathoo N_1$, we may assume that $n(x,x-y)$
is defined for all $y$. Therefore for all $y\in F_1$
\begin{equation*}
n(x,x-y)\,\sum_{i=1}^\infty d_i(y)u(y-h_i)=\sum_{i=1}^\infty n(x,x-y)\,d_i(y)u(y-h_i).
\end{equation*}
Thus
\begin{align*}
\bigl(NDu\bigr)(x)&=\int_{\mathbb R^c}n(x,x-y)\,\Bigl(\sum_{i=1}^\infty
d_i(y)u(y-h_i)\Bigr)\,dy\\
&=\int_{\mathbb R^c}\Bigl(\sum_{i=1}^\infty n(x,x-y)\,d_i(y)u(y-h_i)\Bigr)\,dy.
\end{align*}
Since $x\in F_2$, the function
\begin{align*}
y&\mapsto\Vert n(x,x-y)\Vert\,w(y)\\
&=\Vert n(x,x-y)\Vert\,\sum_{i=1}^\infty\Vert d_i(y)\Vert\cdot|u(y-h_i)|\\
&=\sum_{i=1}^\infty\Vert n(x,x-y)\Vert\cdot\Vert d_i(y)\Vert\cdot|u(y-h_i)|
\end{align*}
is integrable. Therefore, by Proposition~\ref{p:Lebesgue}, the series
$$y\mapsto\sum_{i=1}^\infty n(x,x-y)\,d_i(y)\,u(y-h_i)$$
is absolutely convergent in $L_1$-norm. Hence
\begin{align*}
\bigl(NDu\bigr)(x)&=\int_{\mathbb R^c}\Bigl(\sum_{i=1}^\infty
n(x,x-y)\,d_i(y)u(y-h_i)\Bigr)\,dy\\
&=\sum_{i=1}^\infty\int_{\mathbb R^c}n(x,x-y)\,d_i(y)u(y-h_i)\,dy.
\end{align*}

Performing a simple change of variables and using the absolute convergence of the series
in $L_1$-norm, we arrive at
\begin{align*}
\bigl(NDu\bigr)(x)&=\sum_{i=1}^\infty\int_{\mathbb R^c}n(x,x-y)\,d_i(y)u(y-h_i)\,dy\\
&=\sum_{i=1}^\infty\int_{\mathbb R^c} n(x,x-y-h_i)\,d_i(y+h_i)u(y)\,dy\\
&=\int_{\mathbb R^c}\Bigl(\sum_{i=1}^\infty n(x,x-y-h_i)\,d_i(y+h_i)\Bigr)u(y)\,dy.
\end{align*}

Thus $ND$ is an integral operator with the kernel
\begin{equation*}
n_1(x,y)=\sum_{i=1}^\infty n(x,x-y-h_i)\,d_i(y+h_i).
\end{equation*}
For almost all $(x,y)$, the kernel satisfies the estimate
\begin{equation*}
\Vert n_1(x,y)\Vert\le\sum_{i=1}^\infty\beta(y+h_i)\,\Vert
d_i(y+h_i)\Vert\le\sum_{i=1}^\infty \Vert d_i\Vert_{L_\infty}\,\cdot\beta(y+h_i).
\end{equation*}
We note again that
$$\beta_1(y)=\sum_{i=1}^\infty \Vert
d_i\Vert_{L_\infty}\,\beta(y+h_i)$$ is a summable function. Thus $n_1\in\mathoo N_1$. It
remains to observe that the series
\begin{equation*}
\bar n_1(x,\cdot)=\sum_{i=1}^\infty\bar n(x,x-h_i-\cdot)\,d_i(\cdot+h_i)
\end{equation*}
consists of continuous functions and converges uniformly.
 \end{proof}

 \begin{theorem}\label{t:fin2}
Let $n\in\mathoo C\mathoo N_1$, $1\le p\le\infty$, the operator $N\in\mathoo B(L_p)$ be
defined by formula~\eqref{e:operator N}, and $D\in\mathoo C\mathoo S(L_p)$. If the
operator $D+N$ is invertible in $L_p$, then $(D+N)^{-1}=A+M$, where $A\in\mathoo C\mathoo
S(L_p)$ and $M$ has the form~\eqref{e:operator M} with $m\in\mathoo C\mathoo N_1$.
\end{theorem}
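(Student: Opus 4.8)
The plan is to reduce the assertion to Theorem~\ref{t:fin}, the heart of the matter being the claim that the invertibility of $D+N$ already forces $D$ to be invertible. Granting this claim for the moment, the argument is quick. If $D$ is invertible in $L_p$, then $D^{-1}\in\mathoo C\mathoo S(L_p)$ by Theorem~\ref{t:5.6.10}, and $D^{-1}N\in\mathoo C\mathoo N_1(L_p)$ by Proposition~\ref{p:CS times CN_1}; from $D+N=D(\mathbf1+D^{-1}N)$ we see that $\mathbf1+D^{-1}N$ is invertible, so Theorem~\ref{t:fin} gives $(\mathbf1+D^{-1}N)^{-1}=\mathbf1+M_0$ with $m_0\in\mathoo C\mathoo N_1$, and therefore
\begin{equation*}
(D+N)^{-1}=(\mathbf1+D^{-1}N)^{-1}D^{-1}=D^{-1}+M_0D^{-1}.
\end{equation*}
Here $A=D^{-1}\in\mathoo C\mathoo S(L_p)$, and by Proposition~\ref{p:CS times CN_1} once more the operator $M=M_0D^{-1}$ has the form~\eqref{e:operator M} with $m\in\mathoo C\mathoo N_1$, which is the desired decomposition.

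To establish the claim, I would use the local compactness of $N$. By Theorem~\ref{t:continuity of bar n:C} we have $N\in\mathoo C(L_\infty)$, and $D\in\mathoo C(L_\infty)$ as well; hence $D+N\in\mathoo C(L_\infty)$ and, by Theorem~\ref{t:5.6.3}, $(D+N)^{-1}\in\mathoo C(L_\infty)\subseteq\mathoo t(L_\infty)$. By Theorem~\ref{t:CN<k}, $N\in\mathoo h(L_\infty)$; moreover, since $\beta\in L_1$, the series $\sum_jP_mNP_j$ and $\sum_jP_jNP_k$ converge in operator norm, so $P_mN$ and $NP_k$ are compact for all $m,k\in\mathbb Z^c$. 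Consequently the operators $N(D+N)^{-1}$ and $(D+N)^{-1}N$ are locally compact, and, lying in $\mathoo t(L_\infty)$ because $\mathoo t(L_\infty)$ is an algebra, they belong to $\mathoo h(L_\infty)$. It follows that
\begin{equation*}
D\,(D+N)^{-1}=\mathbf1-N(D+N)^{-1}\qquad\text{and}\qquad (D+N)^{-1}D=\mathbf1-(D+N)^{-1}N
\end{equation*}
differ from $\mathbf1$ by operators of $\mathoo h(L_\infty)$; in other words $D$ is invertible modulo $\mathoo h(L_\infty)$, with $(D+N)^{-1}$ as a two-sided regularizer.

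The remaining step is to pass from invertibility of $D$ modulo $\mathoo h$ to genuine invertibility of $D$, and this is exactly where the difference (as opposed to integral) structure of $D\in\mathoo C\mathoo S$ must be exploited: the regularizer $(D+N)^{-1}$ is itself invertible, so the possible index phenomena are trivial, and the study of the class $\mathoo h$ carried out in Section~\ref{e:h} yields the invertibility of $D$. Once $D$ is invertible in $L_\infty$, Theorem~\ref{t:5.6.10} gives its invertibility in $L_p$, and we conclude as in the first paragraph. I expect this last passage --- the elimination of possible one-sided and index phenomena for operators of $\mathoo C\mathoo S$ --- to be the main obstacle; an equivalent way to phrase the whole difficulty is that $\mathoo C\mathoo N_1$ is a two-sided ideal in the algebra $\mathoo C\mathoo S+\mathoo C\mathoo N_1$ with quotient isomorphic to $\mathoo C\mathoo S$, so that inverse-closedness of the sum is equivalent to the implication ``$D+N$ invertible $\Rightarrow$ $D$ invertible''.
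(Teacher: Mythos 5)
Your first paragraph reproduces the paper's proof verbatim in structure: the paper writes $(D+N)^{-1}=(\mathbf1+D^{-1}N)^{-1}D^{-1}$, invokes Theorem~\ref{t:5.6.10} for $D^{-1}\in\mathoo C\mathoo S$, Proposition~\ref{p:CS times CN_1} for $D^{-1}N\in\mathoo C\mathoo N_1$ and again for $M=KD^{-1}\in\mathoo C\mathoo N_1$, and Theorem~\ref{t:fin} in between. You have also correctly located the one nontrivial prerequisite --- that invertibility of $D+N$ forces invertibility of $D$ --- and correctly guessed that local compactness of $N$ is the mechanism: the paper settles this step by citing \cite[Theorem~6.2.1]{Kurbatov99}, and the sole purpose of Section~\ref{e:h} (Theorem~\ref{t:CN<k}) is to verify the local-compactness hypothesis of that cited theorem.

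The gap is that you do not actually prove this claim, and the part you leave open is the substantive one. What your sketch establishes is that $D(D+N)^{-1}$ and $(D+N)^{-1}D$ are perturbations of $\mathbf1$ by elements of $\mathoo h(L_\infty)$, i.e.\ that $D$ is two-sidedly invertible modulo the locally compact operators. Passing from there to genuine invertibility of $D$ is not a formal consequence of the regularizer $(D+N)^{-1}$ being invertible: invertibility modulo an ideal, even with trivial index, still leaves a possible nontrivial kernel and cokernel, and ruling these out requires exploiting the shift structure of $D\in\mathoo C\mathoo S$ (a limit-operator/translation argument showing that a difference operator of this class which is invertible modulo $\mathoo h$ with zero index is actually invertible). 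That is precisely the content of \cite[Theorem~6.2.1]{Kurbatov99}, which the paper uses as a black box; within this paper, Section~\ref{e:h} only proves the inclusion $\mathoo C\mathoo N_1\subseteq\mathoo h$ and contains no machinery for lifting invertibility modulo $\mathoo h$ to invertibility. So your argument is the paper's argument up to this missing lemma, which you yourself flag as the main obstacle but do not close.
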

 \begin{proof}
Let $D+N$ be invertible. Then by~\cite[Theorem 6.2.1]{Kurbatov99}, the operator $D$ is
invertible. Therefore the operator $(D+N)^{-1}$ can be represented in the form
\begin{equation*}
(D+N)^{-1}=(\mathbf1+D^{-1}N)^{-1}D^{-1}.
\end{equation*}
By Theorem~\ref{t:5.6.10}, $D^{-1}\in\mathoo C\mathoo S$. By Proposition~\ref{p:CS times
CN_1}, $D^{-1}N\in\mathoo C\mathoo N_1(L_p)$. By Theorem~\ref{t:fin},
$(\mathbf1+D^{-1}N)^{-1}$ has the form $\mathbf1+K$, where $K\in\mathoo C\mathoo
N_1(L_p)$. Thus
$(D+N)^{-1}=(\mathbf1+D^{-1}N)^{-1}D^{-1}=(\mathbf1+K)D^{-1}=D^{-1}+KD^{-1}$. Finally,
again by Proposition~\ref{p:CS times CN_1}, $M=KD^{-1}\in\mathoo C\mathoo N_1(L_p)$.
 \end{proof}

%\subsection*{Acknowledgment}
%This work was supported by the Ministry of Education and Science of the Russian
%Federation under state order No. 1306.

\end{document}